\newcommand{\Z}{\mathbb{Z}}    % integer numbers
\newcommand{\Q}{\mathbb{Q}} 	% rational numbers
\newcommand{\R}{\mathbb{R}} 	% real numbers
\newcommand{\bP}{\mathbb{P}}	% probability
\newcommand{\cF}{\mathcal{F}}	% sigma algebra in \Omega
\newcommand{\cB}{\mathcal{B}}	% sigma algebra in \R^n
\newcommand{\bE}{\mathbb{E}}	% expectation
\newcommand{\bu}{\overline{u}}	% average u
\newcommand{\cU}{\mathcal{U}}	% event - stationary random set
\newcommand{\cL}{\mathcal{L}}  % Lipschitz continuous funcitons
\newcommand{\cS}{\mathcal{S}}	% stricly sublinear functions
\newcommand{\UC}{{\rm UC\,}}			% uniformly continuous fcts
\DeclareMathOperator*{\esssup}{ess\,sup}	% esssup
\DeclareMathOperator*{\essinf}{ess\,inf}	% essinf
\newcommand{\eps}{\varepsilon}
\newcommand{\ovl}{\overline}
\definecolor{darkred}{rgb}{0.75,0,0.1} 
\theoremstyle{plain}
\newcounter{thm}[section]
\newtheorem{theorem}[thm]{Theorem}
\newtheorem{lemma}[thm]{Lemma}
\newtheorem{corollary}[thm]{Corollary}
\newtheorem{proposition}[thm]{Proposition}
\newtheorem{remark}[thm]{Remark}
\newtheorem{example}[]{Example}
\author[A. Ciomaga]{Adina Ciomaga$^\ddag$}
\address{$^\ddag$ Department of Mathematics, 
	      The University of Chicago, 
	      5734 S. University Avenue Chicago, 
	      IL 60637, USA}
\email{adina@math.uchicago.edu}
\author[P.E. Souganidis]{Panagiotis E. Souganidis$^{\sharp,\natural}$}
\address{$^\sharp$ Department of Mathematics, 
	      The University of Chicago, 
	      5734 S. University Avenue Chicago, 
	      IL 60637, USA}
\email{souganidis@math.uchicago.edu}
\author[H.V. Tran]{Hung V. Tran$^\S$}
\address{$^\S$ Department of Mathematics, 
	      The University of Chicago, 
	      5734 S. University Avenue Chicago, 
	      IL 60637, USA}
\email{hung@math.uchicago.edu}
\address{$\natural$ Partially supported by the NSF grants DMS - 0901802 and DMS - 1266383}
\title[Stochastic homogenization of interfaces moving with changing sign velocity]
{Stochastic homogenization of interfaces\\ moving with changing sign velocity}
\begin{document}

\begin{abstract}
We are interested in the averaged behavior of interfaces moving in stationary ergodic environments, with  oscillatory  normal velocity which changes sign. This problem can be reformulated, using level sets, as the homogenization of a Hamilton-Jacobi equation with a positively homogeneous non-coercive Hamiltonian. The periodic setting was earlier studied by Cardaliaguet, Lions and Souganidis (2009). Here we concentrate in the random media and show that the solutions of the oscillatory Hamilton-Jacobi equation converge in $L^\infty$-weak $*$ to a linear combination of the initial datum and the solutions of several initial value problems with deterministic effective Hamiltonian(s), determined by the properties of the random media.
\end{abstract}

\maketitle

{\bf Keywords:} 
  stochastic homogenization, 
  Hamilton-Jacobi equations,
  viscosity solutions,
  non-coercive Hamiltonian,
  random media,
  level sets, 		% no AMS number
  front propagation. 	% no AMS number
\smallskip

{\bf AMS Classification:}
  35B27 % Homogenization; PDEs in media with periodic structure
  70H20 % Hamilton-Jacobi equations
  37A50 % Relations with probability theory and stochastic processes
  49L25 % Viscosity solutions
  78A48 % random media, composite media
\smallskip

%=================================================================================
\medskip\section{Introduction}\label{sec:Intro}
%================================================================================= 
We study  the averaged behavior of interfaces moving in stationary ergodic environments, with oscillatory  normal velocity which changes sign. The problem can be formulated using the  level-set method,  as the homogenization of a Hamilton-Jacobi equation with a positively homogeneous non-coercive Hamiltonian. For a general overview and further details we refer to Osher and Sethian \cite{OS:88} (see also Souganidis \cite{S:95}, Barles and Souganidis \cite{BS:98} and the references therein). The interface is defined as the zero level set   
$\Gamma^\eps(t,\omega)$ of the solutions $u^\eps$ of the initial value problem
\begin{equation}\label{eq:HJ_eps}
  \begin{cases}
    u^\eps_t + 	a \left (\frac{x}{\eps},\omega\right) |Du^\eps|=0
		& \hbox{ in } \;\;\;\R^n \times  (0,\infty),\\	
    u^\eps=u_0
		& \hbox{ on } \;\;\; \R^n\times\{0\},
  \end{cases}
\end{equation}
where $u_0 \in \UC(\R^n)$, the space of uniformly continuous functions on $\R^n$.  The random environment is modeled by a probability space $(\Omega,\cF,\bP)$ endowed with an ergodic group of measure preserving transformations $(\tau_z)_{z\in\R^n}$. 

The velocity $a=a(y,\omega): \R^n \times \Omega \to \R$  is a stationary random process with respect to $(\Omega, \cF, \bP)$ (precise definitions are given later) and continuous in $y$ for each $\omega$.
The main challenge is to analyze the averaged behavior of the $u^\eps$'s when {$a(\cdot,\omega)$ changes sign}. In this case, the Hamiltonian corresponding to \eqref{eq:HJ_eps}, given by 
  \begin{equation}\label{eq::Hamiltonian}
    H(p,y,\omega)=a(y,\omega)|p|, 
  \end{equation} 
is {neither coercive nor convex} and, hence, the stochastic homogenization of \eqref{eq:HJ_eps} cannot be handled by the theory developed so far in the stationary ergodic environments. 

When $a$ is strictly positive and, for all $\omega\in\Omega$,  $\inf_{\R^n}a(\cdot,\omega)= a_0>0$, the Hamiltonian is convex and coercive. The typical results yield  that, as $\eps\to0$, the $u^\eps$'s converge, locally uniformly in $\R^n\times [0,\infty)$ and a.s. in $\Omega$, to a deterministic $\bu$ which is the solution of the effective initial value problem
\begin{equation}\label{eq:HJ_avg}
  \begin{cases}
    \bu_t + \ovl{H}(D\bu)=0
	  & \hbox{ in } \;\;\;\R^n \times (0,\infty),\\	
    \bu=u_0
	  & \hbox{ on } \;\;\; \R^n\times \{0\}.
  \end{cases}
\end{equation}
The main step is, of course, to determine the  {effective Hamiltonian} $\ovl H = \ovl H(p)$, which is often called the {ergodic constant}. 

The periodic homogenization of coercive Hamilton-Jacobi equations was first studied by Lions, Papanicolaou and Varadhan \cite{LPV} and later by Evans \cite{Ev:89, Ev:92}. Ishii established in \cite{I:00} the homogenization of Hamilton-Jacobi equations in almost periodic settings. The stochastic homogenization for Hamilton-Jacobi equations with convex and coercive Hamiltonians was established independently by Souganidis \cite{S:99} and Rezakhanlou and Tarver \cite{RT:00}. Results for viscous Hamilton-Jacobi equations were obtained by Lions and Souganidis \cite{LS:05} and Kosygina, Rezakhanlou and Varadhan \cite{KRV:06}, while problems with space-time oscillations were considered by  Kosygina and Varadhan \cite{KV:08} and Schwab \cite{S:09}. In \cite{LS:10}  Lions and Souganidis  gave a simpler proof for homogenization in proba\-bility using  weak convergence techniques. Their program was completed by Armstrong and Souganidis in \cite{AS:12,AS:13}, who also introduced {the metric approach}. The viscous 
case was later refined by  Armstrong and Tran in \cite{AT:13}.

When the velocity in \eqref{eq:HJ_eps} changes sign, the Hamiltonian \eqref{eq::Hamiltonian} is neither convex nor coercive. In this case, we consider the connected components $(U_i(\omega))_{i\in I}$ of $\{x\in\R^n : a(x,\omega)\neq 0\}$. In each $U_i(\omega)$, the Hamiltonian is either convex or concave. Therefore, we study the evolution of the $u^\eps$'s in each rescaled connected component $U_{i,\eps}(\omega) : = \left\{\eps x : x\in U_{i}(\omega)\right\}$ and obtain, for each $i\in I$, an effective equation
\begin{equation}\label{eq:HJ_avg-cc}
  \begin{cases}
    \bu_{i,t} + \ovl{H_i}(D\bu_i)=0
	  & \hbox{ in } \;\;\;\R^n \times (0,\infty),\\	
    \bu_i=u_0
	  & \hbox{ on } \;\;\; \R^n \times \{0\},
  \end{cases}
\end{equation}
with a {deterministic} effective Hamiltonian $\ovl{H_i}=\ovl{H_i}(p)$. It is, of course, necessary to assume that the sets $U_i(\omega)$  are themselves stationary ergodic (definitions are given later). The result is that, as $\eps\to 0$,  the $u^\eps$'s converge locally uniformly in $U_{i,\eps}(\omega) \times [0,\infty)$ to the solution $\bu_i$ of \eqref{eq:HJ_avg-cc} and locally in $L^{\infty}(\R^n\times(0,\infty))$-weak $\star$ to
\begin{equation}\label{eq:weak-conv}
   \bu:=\theta_0 u_0 + \sum_{i\in I} \theta_i \bu_i,
\end{equation}
where, for all $i\in I$, $\theta_i:= \bE[{\bf 1}_{U_i(\omega)}]$ and 
$\theta_0 := \bE[{\bf 1}_{\{a(\cdot,\omega) =0\}}]$.

This result was already established in the periodic setting by Cardaliaguet, Lions and Souganidis \cite{CLS:09}. We recall that, in the periodic case, each effective Hamiltonian $\ovl{H_i}(p)$ is found by solving the associated cell problem in the connected component $U_i$, that is, for each $p\in\R^n$, there is a unique constant $\ovl{H_i}(p)$  such that there exists a periodic solution $w$,  called {corrector}, of
\begin{equation}\label{eq:HJ_cell-cc}
a(y)|p + Dw| = \ovl{H_i}(p)\;\;\; \hbox{ in } \;\;\;{U_i}.
\end{equation} 

The main difficulty in  random environments is {the lack of correctors}. When the media is sta\-tionary ergodic,  one can establish existence of subcorrectors for convex and coercive Hamiltonians \cite{LS:05}, that is, subsolutions of  \eqref{eq:HJ_cell-cc} with strictly sublinear decay at infinity. To overcome the lack of correctors, a new approach was initiated in  \cite{AS:12, AS:13}  to study the stochastic homogenization of Hamilton-Jacobi equations by introducing the metric problem. 

We follow here this approach and consider, for $\mu>0$ and each unbounded connected component $U_i(\omega)$ of $\{x\in\R^n:a(x,\omega)>0\}$ (respectively for $\mu<0$ and each unbounded connected component $U_i(\omega)$ of $\{x\in\R^n:a(x,\omega)<0\}$) and  $z\in U_i(\omega)$, 
\begin{equation}\label{eq:metric-pb-intro}
  \begin{cases}
      a(y,\omega) |Dm_\mu| = \mu & \hbox{ in }\ U_i(\omega) \setminus \{z\},\\
      m_\mu(\cdot,z,\omega)= 0& \hbox{ at } \{z\}.
  \end{cases}
\end{equation}
Problem \eqref{eq:metric-pb-intro} has a stationary and  subadditive  maximal subsolution (respectively superadditive minimal supersolution) $m_\mu$ in $U_i(\omega)$, which can be thought of as the minimal travel time from $z$ to $y$, with passage velocity $a(\cdot,\omega)$, while staying within the connected domain $U_i(\omega)$. The difficulty is that the $m_\mu$'s are defined only in $U_i(\omega)$ and not in $\R^n$. 

The lack of coercivity of \eqref{eq::Hamiltonian} creates an additional problem. Since $a(\cdot,\omega) = 0$ on each $\partial U_i(\omega)$, the $m_\mu$'s are not Lipschitz continuous up to the boundary of $U_i(\omega)$. In fact, as we show, the $m_\mu$'s blow up on $\partial U_i(\omega)$ with a logarithmic rate.  In general, Lipschitz estimates for solutions of Hamilton-Jacobi equations  are given in terms of the geodesic distance within the domain (see for example Lions \cite{PLL:82}). Here, using the local structure of the equation, we give sharp Lipschitz bounds of the $m_\mu$'s, {independent of the geodesic distance}. As a consequence, homogenization holds for a  wide class of domains, such as classical percolation structures from probability (see for example Garet and Marchand  \cite{GM:04}, Antal and Pisztora  \cite{AP:96}) and Poisson point environments.

The tool to study the behavior of the solution to the metric problem is the subadditive ergodic theorem, which, however, cannot be applied directly to the $m_\mu$'s, since they are not defined every\-where. To overcome this drawback, we first extend $m_\mu$ on each ray starting from the origin,  apply the subadditive ergodic theorem to the extension and then relate back the averaged limit to $m_\mu$. This difficulty is amplified by the blow up of the $m_\mu$'s on $\partial U_i(\omega)$. To overcome the latter, we restrict the $m_\mu$'s  to the sets \[U_i^\delta(\omega) = \{x\in U_i(\omega): |a(x,\omega)|>\delta\}.\]
We show that the averaging takes place in $U^\delta_i(\omega)$ and the limit is independent of $\delta$. Moreover, in view of the Lipschitz estimates, we obtain an averaging of $m_\mu$, in the liminf, up to the boundary. More precisely, we show that, for each connected component of $\{x\in\R^n : a(x,\omega)>0\}$ 
\[
\liminf_{\substack{t \to \infty\\t y\in U_i(\omega)}} \frac{m_\mu(ty,0,\omega)}{t} = \lim_{\substack{t \to \infty\\t y\in U_i^\delta(\omega)}} \frac{m_\mu(ty,0,\omega)}{t}=\ovl{m}_\mu(y).\]
When we consider connected components corresponding to $\{x\in\R^n : a(x,\omega)<0\}$ the liminf becomes limsup.

The effective Hamiltonian is then characterized by the $m_\mu$'s as the smallest (respectively the largest) constant for which the metric problem \eqref{eq:metric-pb-intro} has a subsolution with strictly sublinear decay at infinity. We provide an inf-sup representation formula for each deterministic effective Hamiltonian $\ovl{H_i}(p)$ and establish the homogenization result in each connected component. The weak convergence  \eqref{eq:weak-conv} then follows.

Few results are available for non-coercive Hamiltonians and they all rely on some reduction property that compensates for the lack of coercivity; see, for example, Alvarez and Bardi \cite{AB:10}, Barles \cite{B:07} and Imbert and  Monneau \cite{IM:08}. A different approach, based on nonresonance conditions, was initiated by Arisawa and Lions \cite{AL:98} and extended to periodic noncoercive-nonconvex Hamiltonians by Cardaliaguet in \cite{C:10}. Of special  interest is the study of noncoercive Hamilton-Jacobi equations associated to moving interfaces. The homogenization of \eqref{eq:HJ_eps} in the periodic setting was established by Cardaliaguet, Lions and Souganidis \cite{CLS:09}. Prior to their work,  Craciun and Bhattacharya  \cite{CB:03} discussed formally and gave numerical examples for periodic homogenization of such problems. The homogenization of the G-equation (used as model for fronts propagating with normal velocity and advection)  was recently established by Cardaliaguet, Nolen and Souganidis \
cite{CNS:11} in spatio-temporal periodic environments and  by Cardaliaguet and Souganidis in \cite{CS:13} in random media. Homogenization of general non-convex Hamiltonians in random environments has remained, until now, completely open. A first extension to level-set convex Hamiltonians was proven by Armstrong and Souganidis in \cite{AS:13} and, more recently, Armstrong, Tran and Yu established stochastic homogenization for double-well type Hamiltonians in \cite{ATY:13}. 

% -----------------------------------------------------------
\subsection*{The organization of the paper} 
%------------------------------------------------------------
In Section 2 we  introduce the assumptions on the random media and the Hamiltonian and state the main results. In Section  3 we discuss some examples. We study the metric problem and give optimal Lipschitz regularity results of the maximal subsolution in Section 4. We prove in Section 5 the homogenization of the metric problem and obtain a deterministic effective Hamiltonian in Section 6. The main homogenization result is shown in Section 7.

%------------------------------------------------------------
\subsection*{Notations}
%------------------------------------------------------------
We work in the $n$-dimensional Euclidean space $\R^n$ and we denote by $\Q^n$ and $\Z^n$  respectively the sets of points with rational and integer coordinates.  For a unit vector $y\in\R^n$, we write $\R^+y := \{ty : t\geq 0 \}$. We denote by $B_r(x_0)$ the ball of radius $r$ centered at $x_0$ and by $B_R$ the ball of radius $R$ centered at the origin.  If $U\subset\R^n$, we denote by ${\bf 1}_U$ the characteristic function of $U$. For all $\eps>0$ we consider the rescaled sets $U_\eps = \{\eps x : x\in U\}$. Let $\mathcal C^1(U)$ and $\mathcal C_c^\infty(\R^n)$ be respectively the sets of continuously differentiable functions on $U$ and of infinitely differentiable functions on $\R^n$ with compact support. We denote by $\cL$ the class of Lipschitz functions on $\R^n$.

%=================================================================================
\medskip\section{Assumptions and Main Results}\label{sec::Assumptions}
%=================================================================================
We describe here the general setting,  introduce the  assumptions and  present the main theorems. For the reader's convenience, we recall some preliminary results to be used later in the paper. 

%------------------------------------------------------------
\subsection*{The general random setting}
%------------------------------------------------------------

We consider a probability space $(\Omega, \cF, \bP)$ endowed with an {ergodic group of measure preserving transformations} $(\tau_z)_{z\in\R^n}$, that is, a family of maps $\tau_z:\Omega\to\Omega$ satisfying,  for all $z,z'\in \R^n$ and all $\cU\in\cF$,
	\[\tau_{z+z'}=\tau_{z}\circ\tau_{z'}\;\; \hbox{ and }\;\;
	\bP[\tau_z\cU] = \bP[\cU]\]
and 
	\[\hbox{ if }\tau_z(\cU)=\cU \hbox{ for every } z\in\R^n,
	\hbox{ then either }\bP[\cU] = 1\hbox{ or }\bP[\cU]=0.\]
 
%------------------------------------------------------------
\subsection*{The assumptions}
%------------------------------------------------------------
 
Let $a:\R^n\times\Omega\to\R$ be measurable with respect to the $\sigma$-algebra generated  by $\cB\times\cF$, where  $\cB$ denotes the Borel $\sigma$-algebra of $\R^n$ and assume that

%------------------------------------------------------------
\begin{enumerate}
 \item[(A1)] $a$ is {stationary with respect to 
 	$(\tau_z)_{z\in\R^n}$}, that is, for every $y,z\in \R^n$ and each $\omega\in\Omega$, 
	\[a(y,\tau_z\omega) = a(y+z,\omega),\]    
 \item[(A2)] the family $\big(a(\cdot,\omega)\big)_{\omega\in\Omega}$ is 
	{equi-bounded and equi-Lipschitz continuous, with Lipschitz constant $L>0$}, that is,
	for every $y,z\in \R^n$ and $\omega\in\Omega$, 
	\[|a(y,\omega) - a(z,\omega)|\leq L |y-z|,\]
 \item[(A3)] for each $\omega\in\Omega$, $a(\cdot,\omega)$ {changes sign}. 
\end{enumerate}
Let $\big(U_i(\omega)\big)_{i\in I^+}$ be the connected components of $\{a(\cdot,\omega)>0\}$ and $\big(U_i(\omega)\big)_{i\in I^-}$ be the connected components of $\{a(\cdot,\omega)<0\}$ such that, for all $i\in I^\pm$, 
\[\partial U_i(\omega)\subset U_0 (\omega) : =  \{x\in\R^n : a(x,\omega) = 0 \}.\]
Denote by $I := I^+\cup U^-$ and $I_0 := I \cup \{0\}$.

We introduce next the assumptions on the random structure of $\big(U_i(\omega)\big)_{i\in I_0}$. At first look, the conditions appear uncheckable since there is no topology in $\Omega$, but later in the paper (Section \ref{sec::examples}) we give several examples of random media that satisfy these assumptions. We assume that:
%------------------------------------------------------------
\begin{enumerate}
 \item[(S1)] there exists a family $\{\cU_i\}_{i\in I_0}$ of measurable subsets of $\Omega$ 
	such that $\bP[\cU_0]=\theta_0\geq 0$ and, for all $i \in I$, $\bP[\cU_i] = \theta_i>0$ and, for each $\omega\in\Omega$,  $U_i(\omega)$ is {stationary}, that is, 
	\[U_i(\omega) = \big\{x\in\R^n \,:\, \tau_x \omega\in \cU_i\big\},\]
 \item[(S2)] for each $\omega\in\Omega$ and all $i \in I^\pm$, $U_i(\omega)$ is
	{$\delta$-connected}, that is, there exists $\delta_0>0$ such that, for every $\delta\in(0,\delta_0)$, $U^{\delta}_i(\omega)$ is connected and
\item[(S3)] if $U_i(\omega)$ with $i\in I$ is unbounded, then, for every $\delta\in(0,\delta_0)$ and every unit vector $y\in \R^{n}$,  there exists an (at most) countable family of random intervals $\big( (s_j(\omega),t_j(\omega))\big)_{j=1}^\infty$, which depend on $y$ and $\delta$, so that
	\[I^\delta_y(\omega): = \{ t\geq 0: ty\not\in U^{\delta}_i(\omega)\} 
	= \bigcup_{j=1}^\infty (s_j(\omega),t_j(\omega)) \]
	and, uniformly in $y$ and $\delta$, 
	\begin{equation}\label{eq:intervals}
	 \lim_{j\to\infty}\frac{s_j(\omega)}{t_j(\omega)} = 1 \hbox{ a.s. in }\omega.
	\end{equation}
\end{enumerate}
%------------------------------------------------------------

In view of Borel-Cantelli Lemma, checking \eqref{eq:intervals} is equivalent to showing that, for every $\eps>0$,
      \[\sum_{j=1}^\infty\bP\left[\left\{ \omega\in\Omega : \left|\frac{t_j(\omega)-s_j(\omega)}{t_j(\omega)}\right|>\eps\right\}\right]<\infty.\]

% ===================================
\subsection*{The main result} 
% ===================================
We now present the homogenization result, which is established in several steps. First we show that, for each connected component $U_i(\omega)$ with $i\in I$, there exists a deterministic effective Hamiltonian and then we prove that, as $\eps\to 0$,  the $u^\eps$'s converge locally uniformly in each $U_{i,\eps}(\omega)$ to a deterministic $\bar u_i$. The $L^\infty$-weak $\star$ convergence of the $u^\eps$'s to an averaged profile $\bar u$ follows.

For each $\mu > 0$, $\omega\in\Omega$ and $U(\omega)$ a connected component of $\{x\in\R^n: a(x,\omega)\neq0\}$, let ${\cS_\mu(U(\omega))}$ be the class of functions  $w$ such that, for each ${\eta}>0$ sufficiently small, there exists $\ovl{\delta}=\ovl{\delta}({\eta})>0$, which is independent of $\omega$, satisfying $\lim_{{\eta} \to 0}\ovl{\delta}({\eta})=0$ and, for all $\delta \in (0,\ovl{\delta})$ and all $y_1,y_2 \in U^{\delta}(\omega)$,
\[
|w(y_1)-w(y_2)| \leq \left( \frac{\mu}{{\delta}}+{\eta}\right)|y_1-y_2|.
\]
Define $\cL_\mu(U(\omega)) := \cL\cap \cS_\mu(U(\omega)).$ We also consider the classes $\cS^\pm$ of Lipschitz continuous functions which are strictly sublinear from below and  from above respectively, that is
\begin{eqnarray*}
 \cS^+ :=\Big\{ w\in \cL : \  \liminf_{|y| \to \infty}\frac{w(y)}{|y|} \geq 0 \Big\}\hbox{ and }
 \cS^- :=\Big\{ w\in \cL : \  \limsup_{|y| \to \infty}\frac{w(y)}{|y|} \leq 0 \Big\}.
\end{eqnarray*}
 
%------------------------------------------------------------------------------------------
\begin{theorem}[The effective Hamiltonians]\label{thm:effective_Hamiltonian}
Assume $(A1),(A2),(A3)$, $(S1),(S2)$ and $(S3)$. For each $i\in I$ there exists a deterministic, 1-positively homogeneous, continuous effective Hamiltonian $\ovl{H}_i:\R^n\to\R$ such that\\
 (i) if $U_i(\omega)$ is bounded, then $\ovl H_i\equiv0$,\\
(ii) if $U_i(\omega)$  is unbounded and $i\in I^+$, then  $\ovl{H}_i$ is convex, nonnegative
      and, for all $p\in\R^n$,
      \begin{equation}\label{eq:eff-H-positive}
      \ovl{H}_i(p) : = \inf_{\substack{(w,\lambda) \in {\cS^+}\times(0,\infty)\\w+p\cdot y\in{\cL_\lambda(U(\omega))}}} \left[\esssup_{y\in U_i(\omega)} 
      \Big( a(y,\omega)|p+Dw|\Big)\right],
      \end{equation}
(iii) if $U_i(\omega)$ is unbounded and $i\in I^-$, then $\ovl{H}_i$ is concave,      nonpositive and, for all $p\in\R^n$,
      \begin{equation}\label{eq:eff-H-negative}
      \ovl{H}_i(p) := \sup_{\substack{(w,\lambda) \in {\cS^-}\times(0,\infty)\\w+p\cdot y\in{\cL_\lambda(U(\omega))}}}\left[\essinf_{y\in U_i(\omega)} 
      \Big( a(y,\omega)|p+Dw| \Big)\right].
      \end{equation}
\end{theorem}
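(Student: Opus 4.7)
My plan is to dispose of case (i) together with the structural properties by direct manipulation of the formulas, and to attack case (ii) (case (iii) follows by replacing $a$ with $-a$) via the metric approach of Armstrong--Souganidis \cite{AS:12,AS:13}, adapted to the non-coercive setting. For bounded $U_i(\omega)$, I would choose a smooth cutoff $\phi\in\mathcal{C}_c^\infty(\R^n)$ with $\phi\equiv 1$ on $U_i(\omega)$ and set $w(y)=-(p\cdot y)\phi(y)$: then $p+Dw\equiv 0$ on $U_i(\omega)$, and both formulas \eqref{eq:eff-H-positive}--\eqref{eq:eff-H-negative} return $0$. The algebraic properties in (ii)--(iii) then follow from the form of the inf/sup formulas: $1$-positive homogeneity from the scaling $w\mapsto\alpha w$ under $p\mapsto\alpha p$ for $\alpha>0$; convexity from the triangle inequality $|tp_1+(1-t)p_2+D(tw_1+(1-t)w_2)|\leq t|p_1+Dw_1|+(1-t)|p_2+Dw_2|$ together with $a>0$ on $U_i(\omega)$; nonnegativity from the sign of $a$; and continuity from convexity plus the finite upper bound $\ovl{H}_i(p)\leq\|a\|_\infty|p|$ obtained by testing with $w\equiv 0$.

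The heart of the matter is showing that the right-hand side of \eqref{eq:eff-H-positive} is deterministic, not merely $\omega$-measurable. My plan is to pass through the metric problem \eqref{eq:metric-pb-intro}: for each $\mu>0$ and $z\in U_i(\omega)$, let $m_\mu(\cdot,z,\omega)$ be the maximal subsolution of $a|Dm|=\mu$ in $U_i(\omega)\setminus\{z\}$ vanishing at $z$. By (A1) and (S1) the family is stationary, and the comparison structure yields subadditivity $m_\mu(y,z,\omega)\leq m_\mu(y,z',\omega)+m_\mu(z',z,\omega)$. The subadditive ergodic theorem should then produce, almost surely, a deterministic homogenized limit
\[
\ovl{m}_\mu(y) \;=\; \lim_{\substack{t\to\infty\\ ty\in U_i^{\delta}(\omega)}}\frac{m_\mu(ty,0,\omega)}{t},
\]
and $\ovl{H}_i(p)$ is characterized intrinsically as the smallest $\mu>0$ for which the metric problem admits a subsolution strictly sublinear at infinity, equivalently $\inf\{\mu>0:\ovl{m}_\mu(y)\geq p\cdot y\ \text{for all } y\}$. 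A perturbed test function argument, exploiting the maximality of $m_\mu$ and the stationarity of $w+p\cdot y$, should identify this characterization with the inf-sup formula \eqref{eq:eff-H-positive}.

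The main obstacle will be executing the subadditive ergodic step in the non-coercive regime, where two compounding difficulties arise: $m_\mu$ is defined only on $U_i(\omega)$, which for some $\omega$ may miss the ray $\R^+ y$ altogether, and $m_\mu$ blows up (logarithmically) on $\partial U_i(\omega)$, so there is no uniform Lipschitz control up to the boundary. The usual expedient of bounding $m_\mu$ by the geodesic distance inside $U_i(\omega)$ is useless here, since that distance need not be comparable to the Euclidean one on irregular (e.g.\ percolation-type) clusters. My plan is to restrict to the thickened sets $U_i^\delta(\omega)=\{x\in U_i(\omega):|a(x,\omega)|>\delta\}$, establish sharp dimension-free Lipschitz bounds with constant of order $\mu/\delta$ there --- which is precisely what the class $\cL_\mu(U(\omega))$ in the statement is designed to encode --- run the subadditive ergodic theorem along rays restricted to $U_i^\delta(\omega)$, using (S3) and \eqref{eq:intervals} to guarantee that the good part of each ray has full relative density, and finally send $\delta\to 0$, exploiting the Lipschitz estimates to show that the limit is $\delta$-independent. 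Once $\ovl{m}_\mu$ is in hand, identifying it with the variational formula and reading off the listed properties is routine.
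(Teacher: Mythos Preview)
Your proposal is correct and follows essentially the same route as the paper: the bounded case via a compactly supported cutoff yielding $p+Dw\equiv 0$ on $U_i(\omega)$ (Proposition~\ref{prop::barH-domainU}), the structural properties read off from the inf-sup formula (Proposition~\ref{prop::convexity-H}), and the deterministic identification via the metric problem with restriction to $U_i^\delta(\omega)$, sharp $\mu/\delta$ Lipschitz bounds, extension along rays using $(S3)$, the subadditive ergodic theorem, and $\delta\to 0$ (Sections~\ref{sec::metric-pb}--\ref{sec::EffHamiltonian}). The only quibble is that the link between the inf-sup formula and the characterization $\inf\{\mu>0:\ovl m_\mu(y)\ge p\cdot y\}$ is not a perturbed test function argument but a direct consequence of the maximality of $m_\mu$ (Proposition~\ref{prop::liminf-H}): any admissible $w$ in \eqref{eq:eff-H-positive} is dominated by $m_\mu$, and conversely $m_\mu$ itself (suitably truncated and extended) furnishes an admissible $w$.
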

%------------------------------------------------------------------------------------------
For each $i\in I$ we solve the averaged equation
 \begin{equation}\label{eq:HJs_avg}
  \begin{cases}
    \bu_{i,t} + \ovl{H}_i(D\bu_i)=0	&\hbox{ in } \;\;\;\R^n \times (0,\infty),\\	
    \bu_i =u_0 				&\hbox{ on } \;\;\; \R^n \times\{0\}
  \end{cases}
\end{equation}
and establish the following homogenization result.  

%------------------------------------------------------------------------------------------
\begin{theorem}[Homogenization]\label{thm:main_result}\label{thm::homog-time}
Assume $(A1),(A2),(A3)$, $(S1),(S2)$ and $(S3)$. There exists $\ovl\delta_0>0$ and an event of full probability  $\widetilde \Omega\subseteq\Omega$ such that, for each $\omega\in\widetilde\Omega$, $\delta\in(0,\ovl \delta_0)$ and $T>0$, the unique solution $u^\eps = u^\eps(\cdot,\cdot,\omega)$ of \eqref{eq:HJ_eps} converges locally uniformly in $U^\delta_{i,\eps}(\omega)  \times [0,T]$ to the unique solution $\bu_i$ of \eqref{eq:HJs_avg}, that is, for $(x,t)\in \R^n\times (0,T)$,
\begin{eqnarray}
\label{eq:conv-cc+}
  	\liminf_{\substack{\eps\to 0\\ y\to x, s\to t \\ y\in U_{i,\eps}(\omega)} }u^\eps (y,t,\omega) =
   	\limsup_{\substack{\eps\to 0\\ y\to x, s\to t \\ y\in U^\delta_{i,\eps}(\omega)} }u^\eps (y,t,\omega) = 
	\bar u_i(x,t)& \hbox{ if } \;\;\; i\in I^+, \\
\label{eq:conv-cc-}
	\limsup_{\substack{\eps\to 0\\ y\to x, s\to t \\ y\in U_{i,\eps}(\omega)} }u^\eps (y,t,\omega) =
   	\liminf_{\substack{\eps\to 0\\ y\to x, s\to t \\ y\in U^\delta_{i,\eps}(\omega)} }u^\eps (y,t,\omega) = 
	\bar u_i(x,t) & \hbox{ if }\;\;\; i\in I^-. \\
\end{eqnarray}
Moreover, for each $\omega\in\tilde\Omega$ and $R>0$, as $\eps\to0$,
\begin{equation}
  u^\eps \stackrel{*}\rightharpoonup \bu:=\theta_0 u_0 + \sum_{i\in I} \theta_i \bu_i
  \ \hbox{ in} \ L^\infty(B_R \times (0,T)).
\end{equation}
\end{theorem}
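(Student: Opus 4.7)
The proof splits into (a) a local uniform convergence statement on each rescaled component $U^\delta_{i,\eps}(\omega)$, and (b) passage from that local statement to the weak-$\star$ limit via the multiparameter Birkhoff ergodic theorem applied to the stationary indicator functions. First I fix a full-measure event $\widetilde\Omega$ on which Theorem \ref{thm:effective_Hamiltonian} holds, the metric problem of Section 5 homogenizes for every rational direction and rational $\mu$, and the ergodic theorem applies to $\mathbf{1}_{\cU_i}$ and to the stationary indicator of $\{x\in U_i(\omega):|a(x,\omega)|>\delta\}$ for countably many $\delta>0$. Bounded components contribute only $u_0$ in the limit because $\ovl{H}_i\equiv 0$, so I may focus on the unbounded components, and by symmetry I treat only $i\in I^+$.

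For (a), using the uniform bound $\|u^\eps\|_\infty\leq\|u_0\|_\infty$, I define the half-relaxed limits $\ovl u$ as the $\limssup$ of $u^\eps$ restricted to $y\in U^\delta_{i,\eps}(\omega)$ and $\underline u$ as the $\liminf_*$ restricted to $y\in U_{i,\eps}(\omega)$, and would show that $\ovl u$ is a viscosity subsolution and $\underline u$ a viscosity supersolution of $v_t+\ovl{H}_i(Dv)=0$ in $\R^n\times(0,T)$ with $\ovl u(\cdot,0)=\underline u(\cdot,0)=u_0$. Comparison for this deterministic, convex, positively homogeneous Hamilton--Jacobi equation then forces $\ovl u=\underline u=\bar u_i$, yielding \eqref{eq:conv-cc+}. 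The subsolution inequality at a touching test point $(x_0,t_0)$ with gradient $p$ is proved by the perturbed test function method: given $\eta>0$, pick $(w,\lambda)\in\cS^+\times(0,\infty)$ with $w+p\cdot y\in\cL_\lambda(U_i(\omega))$ nearly attaining the infimum in \eqref{eq:eff-H-positive}, and use $\phi(y,s)+\eps w(y/\eps)$ as perturbed test function; on $U^\delta_{i,\eps}(\omega)$ the coefficient $a(\cdot/\eps,\omega)$ is bounded below by $\delta$ and the definition of $\cL_\lambda$ controls $|Dw|$ by $\lambda/\delta+\eta$, so the pointwise inequality inherited from the cell problem passes to the limit to give $\phi_t(x_0,t_0)+\ovl{H}_i(p)\leq\eta$. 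The supersolution inequality is the dual argument, using the minimal superadditive supersolution of the metric problem \eqref{eq:metric-pb-intro} produced in Section 5 to build the perturbation.

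The main obstacle, and the reason \eqref{eq:conv-cc+}--\eqref{eq:conv-cc-} are stated one-sidedly, is control of the boundary layer $U_{i,\eps}(\omega)\setminus U^\delta_{i,\eps}(\omega)$, where $a$ degenerates and no uniform Lipschitz estimate is available. Here I would invoke the sharp $\mu/\delta$ Lipschitz bound embedded in the class $\cL_\mu$, together with the logarithmic blow-up of $m_\mu$ on $\partial U_i$ established in Section 4, to compare $u^\eps$ one-sidedly outside $U^\delta_{i,\eps}(\omega)$ with a $\delta$-dependent shift of the effective solution $\bar u_i$ that vanishes as $\delta\to 0$. Because $\bar u_i$ does not depend on $\delta$, sending $\delta\to 0$ at the end is harmless.

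For (b), testing against $\phi\in L^1(B_R\times(0,T))$, I split
\[
\int \phi\,u^\eps\,dx\,dt=\int_{U_{0,\eps}(\omega)}\!\phi\,u^\eps\,dx\,dt+\sum_{i\in I}\int_{U^\delta_{i,\eps}(\omega)}\!\phi\,u^\eps\,dx\,dt+R_\delta(\eps),
\]
with $|R_\delta(\eps)|\leq\|u_0\|_\infty\sum_{i\in I}\int_{U_{i,\eps}(\omega)\setminus U^\delta_{i,\eps}(\omega)}|\phi|\,dx\,dt$. On each $U^\delta_{i,\eps}(\omega)$, step (a) gives $u^\eps\to\bar u_i$ locally uniformly, while the ergodic theorem yields $\mathbf{1}_{U^\delta_{i,\eps}(\omega)}\rightharpoonup^{\,\star}\theta^\delta_i$ in $L^\infty$-weak-$\star$, with $\theta^\delta_i\nearrow\theta_i$ as $\delta\to 0$; since the product of a uniformly convergent and a weakly-$\star$ convergent sequence converges weakly-$\star$ to the product of the limits, this summand tends to $\theta^\delta_i\int\phi\,\bar u_i$. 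On $U_{0,\eps}$ the velocity vanishes, so $u^\eps\equiv u_0$ pointwise there (by using $u_0$ itself as both sub- and super-barrier at zeros of $a$), and $\mathbf{1}_{U_{0,\eps}(\omega)}\rightharpoonup^{\,\star}\theta_0$ gives the $\theta_0\int\phi\,u_0$ contribution. Letting $\eps\to 0$ and then $\delta\to 0$, with $R_\delta(\eps)\to 0$ by monotone convergence, delivers $u^\eps\stackrel{*}\rightharpoonup\theta_0 u_0+\sum_{i\in I}\theta_i\bar u_i$.
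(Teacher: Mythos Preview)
Your two-part architecture---local uniform convergence on each $U^\delta_{i,\eps}(\omega)$, then the ergodic-theorem splitting for the weak-$\star$ limit---is the right outline, and your step~(b) is essentially what the paper does (the paper additionally restricts first to a finite subfamily $J\subset I$ to make the sum manageable, but that is a minor point).

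The gap is in step~(a), specifically the subsolution argument. You propose to take $(w,\lambda)$ nearly attaining the infimum in \eqref{eq:eff-H-positive}, so that $a(y,\omega)|p+Dw|\le \ovl H_i(p)+\eta$ almost everywhere in $U_i(\omega)$, and to use $\phi+\eps w(\cdot/\eps)$ as the perturbed test function. But this yields the \emph{wrong} inequality. At a local maximum of $u^\eps-(\phi+\eps w(\cdot/\eps))$ the viscosity subsolution property of $u^\eps$ gives (heuristically) $\phi_t+a(\cdot/\eps,\omega)|p+Dw(\cdot/\eps)|\le 0$; to conclude $\phi_t+\ovl H_i(p)\le\eta$ you would need $a(y,\omega)|p+Dw(y)|\ge \ovl H_i(p)-\eta$ at the relevant point, i.e.\ a \emph{supersolution} of the cell problem, not a subsolution. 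A near-minimizer of the inf--sup formula supplies only the upper bound on $a|p+Dw|$, which is useless here (it would be the correct ingredient for the \emph{supersolution} property of $\underline u$, so the two sides are effectively swapped in your sketch). This is exactly the place where the lack of correctors in the random setting bites: you cannot pick a $w$ that satisfies the cell equation with equality.

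The paper avoids this by inserting an intermediate step you have bypassed: it first homogenizes the \emph{stationary} auxiliary problem $w^\eps+a(x/\eps,\omega)|p+Dw^\eps|=0$ (Proposition~\ref{prop::homog-stat}), showing $w^\eps\to-\ovl H_i(p)$ locally uniformly on $U^\delta_{i,\eps}(\omega)$ by a direct comparison with the rescaled metric solution $m^\eps_\mu$. Crucially, $m_\mu$ is an \emph{exact} solution of $a|Dm_\mu|=\mu$ (Proposition~\ref{prop::well-posed-mp}), hence simultaneously a sub- and supersolution, and its blow-up on $\partial U_i(\omega)$ together with Lemma~\ref{lemma::up-to-bdry} is what localizes the comparison and controls the boundary layer. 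Once the stationary problem is homogenized, the time-dependent statement follows from the standard perturbed-test-function reduction. If you want to keep a direct attack on the time-dependent problem, you must use $m_\mu(\cdot,z,\omega)-p\cdot(\cdot-z)$ (with $\mu=\ovl H_i(p)$) rather than a generic $w\in\cS^+$ as the perturbation for the subsolution side, and then deal explicitly with its blow-up; your reference to the ``minimal superadditive supersolution'' for the other side does not match the $i\in I^+$ case you are treating.
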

%------------------------------------------------------------------------------------------

% -----------------------------------
\subsection*{Some preliminary results}
% -----------------------------------
An important tool in our analysis is the subadditive ergodic theorem for continuous subadditive processes. For the reader's convenience we give below the precise statement and we refer to Akcoglu and Krengel \cite{AK:81} for its proof.

Let $(\sigma_t)_{t\ge 0}$  and $\mathcal I$ be respectively a semi-group of measure-preserving transformations on  $\Omega$ and the class of subsets of $[0,\infty)$ which are finite unions of intervals of the form $[a,b)$. A {continuous subadditive process} on $(\Omega,\cF,\bP)$ is a map  $Q:\mathcal{I} \to L^1(\Omega,\bP)$ which is
\begin{enumerate}[(i)]
  \item stationary invariant, that is,
	      $ Q(I)(\sigma_t\omega) = Q(t+I)(\omega)
	      \hbox{ for all } t>0, I\in\mathcal{I}, \hbox{ a.s. in } \omega$,
  \item uniformly integrable, that is, there exists $C>0$ such that,
	      $\bE|Q(I)| \leq  C |I| \hbox{ for all } I\in \mathcal{I},$ and
  \item subadditive with respect to unions of disjoint intervals, that is, if $I_1,I_2,..., I_k\in\mathcal I$ are disjoint, then
	      $Q(\bigcup_{i=1}^k I_i)\leq \sum_{i=1}^k Q(I_i).$
\end{enumerate}
 
\begin{theorem}[The subadditive ergodic theorem]\label{thm:SED}
Let $Q:\mathcal{I} \to L^1(\Omega,\bP)$ be a continuous subadditive process on $(\Omega,\cF,\bP)$ with respect   to the semi-group of measure-preserving transformations  $(\sigma_t)_{t\ge 0}$. Then there exists a random variable $q$ such that 
 \begin{equation*}
    \lim_{t\rightarrow \infty} \frac{1}{t} Q\big([0,t)\big)(\omega) = q(\omega) 
    \hbox{ a.s. in }\omega.
 \end{equation*}
If, in addition,  $(\sigma_t)_{t\ge 0}$ is ergodic, then $q(\omega)\equiv \bar q$ a.s., for some constant $\bar q$.
\end{theorem}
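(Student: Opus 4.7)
My strategy is to reduce Theorem 2.2 to the classical discrete subadditive ergodic theorem of Kingman, applied to the sampled sequence $X_n(\omega):=Q([0,n))(\omega)$, and then interpolate between integer times. The stationarity hypothesis (i) yields $Q([n,n+m))(\omega)=X_m(\sigma_n\omega)$; the subadditivity hypothesis (iii) applied to the disjoint decomposition $[0,n+m)=[0,n)\cup[n,n+m)$ gives the discrete cocycle bound $X_{n+m}(\omega)\le X_n(\omega)+X_m(\sigma_n\omega)$; and the uniform integrability hypothesis (ii) provides $\bE|X_n|\le Cn$. These are precisely Kingman's hypotheses applied to the single measure-preserving transformation $\sigma_1$, so his theorem produces an integrable random variable $q$ with $X_n(\omega)/n\to q(\omega)$ almost surely.

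To pass from integer times to all $t\to\infty$, let $n=\lfloor t\rfloor$ and apply subadditivity to the two decompositions $[0,n+1)=[0,t)\cup[t,n+1)$ and $[0,t)=[0,n)\cup[n,t)$; together with stationarity this yields the sandwich
\[
X_{n+1}(\omega)-Q([0,n+1-t))(\sigma_t\omega)\ \le\ Q([0,t))(\omega)\ \le\ X_n(\omega)+Q([0,t-n))(\sigma_n\omega).
\]
Dividing by $t$ and letting $t\to\infty$, the two outer terms both converge to $q(\omega)$, so it suffices to show that the remainder terms, after division by $t$, vanish almost surely. Setting $\phi(\omega):=\sup_{0\le s\le 1}|Q([0,s))(\omega)|$, both remainders are bounded by shifts of $\phi$, and the claim reduces to $\phi(\sigma_n\omega)/n\to 0$ almost surely, which follows from Birkhoff's ergodic theorem applied to $\phi$, provided $\phi\in L^1(\Omega,\bP)$.

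The integrability of $\phi$ is the main technical obstacle, since hypothesis (ii) controls $\bE|Q([0,s))|$ only pointwise in $s$ and not uniformly. I would attack it by iterating subadditivity on a dyadic partition of $[0,1]$ combined with a maximal inequality in the spirit of Doob's martingale maximal theorem, which is essentially the route followed by Akcoglu and Krengel in \cite{AK:81}. For the ergodic case, one first shows the one-sided invariance $q\circ\sigma_t\ge q$ almost surely by dividing $Q([t,t+n))(\omega)\ge Q([0,t+n))(\omega)-Q([0,t))(\omega)$ by $n$; measure-preservation under $\sigma_t$ then forces equality, so $q$ is invariant under the full semigroup $(\sigma_t)_{t\ge 0}$, and ergodicity makes $q$ almost surely constant.
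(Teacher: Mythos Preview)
The paper does not supply a proof of Theorem~\ref{thm:SED}; it is quoted as a preliminary tool with a direct reference to Akcoglu and Krengel~\cite{AK:81}. There is therefore no ``paper's own proof'' to compare against.

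Your outline is the standard reduction of the continuous-parameter statement to Kingman's discrete theorem and is broadly correct. Two small comments. First, the claim ``$\phi(\sigma_n\omega)/n\to 0$ follows from Birkhoff'' is slightly misattributed: what you actually use is that for a stationary sequence with $\phi\in L^1$ one has $\sum_n\bP[|\phi|>n\varepsilon]<\infty$, so $\phi\circ\sigma_n/n\to 0$ a.s.\ by Borel--Cantelli. Second, in your lower sandwich the remainder is $Q([0,n+1-t))(\sigma_t\omega)$, which involves the \emph{continuous} shift $\sigma_t$; bounding it by $\phi(\sigma_n\omega)$ is not immediate, and controlling $\sup_{0\le s\le 1}\phi(\sigma_s\cdot)$ is a second instance of the same maximal problem. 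You correctly flag that the integrability of $\phi$ is the real work and that it is handled by the maximal inequality of Akcoglu--Krengel, so your sketch ultimately loops back to the very reference the paper cites.
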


Solutions of the Hamilton-Jacobi equations are to be understood in the viscosity sense (see, for example, the ``User's Guide'' of Crandall, Ishii and Lions \cite{CIL:92} and the books by Barles \cite{B:94} and Bardi and Capuzzo-Dolcetta \cite{BCD:97}). The following well known fact will be used several times in the paper.

%------------------------------------------------------------------------------------------
\begin{lemma}\label{lem:equiv-soln}
Let $p\in\R^n$, $\mu\in\R$, $U\subset\R^n$ open and $a:U\to\R$ such that $a>0$ in $U$ and $a=0$ on $\partial U$. Then $w\in \cL$ is a  viscosity solution of 
 \begin{equation}\label{eq:susol}
  a(y) |p+Dw| \leq \mu \;\;\;\hbox{ in } \;\;\;U
 \end{equation}
if and only if $w$ satisfies \eqref{eq:susol} almost everywhere in $U$.
\end{lemma}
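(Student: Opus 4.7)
The statement is the classical equivalence of viscosity and almost-everywhere subsolutions for a Hamilton--Jacobi inequality whose Hamiltonian is convex in the gradient. My plan is to treat the two directions separately: the implication ``viscosity $\Rightarrow$ a.e.'' rests on Rademacher's theorem, while the converse exploits the convexity of $q\mapsto|p+q|$ through a mollification argument and Jensen's inequality.

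For the forward direction, I would fix a point $y_0\in U$ at which the Lipschitz function $w$ is classically differentiable; by Rademacher's theorem this is true for a.e. $y_0\in U$. At such a point, the classical gradient $Dw(y_0)$ belongs to the superdifferential $D^+w(y_0)$, so the standard characterisation of viscosity subsolutions via superdifferentials immediately yields $a(y_0)|p+Dw(y_0)|\leq \mu$, which is precisely the a.e.\ statement.

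For the reverse direction, I would regularise by convolution. Let $\rho_\eps$ be a standard mollifier supported in $B_\eps$, set $w_\eps := w*\rho_\eps$, and work on $U_\eps:=\{y\in U:\mathrm{dist}(y,\partial U)>\eps\}$, where $w_\eps$ is smooth with $Dw_\eps=Dw*\rho_\eps$. Applying Jensen's inequality to the convex map $q\mapsto|p+q|$ and then the Lipschitz bound $|a(y)-a(y-z)|\leq L|z|$, I expect to derive
\[
a(y)|p+Dw_\eps(y)|\leq \int\rho_\eps(z)\,a(y-z)|p+Dw(y-z)|\,dz + L\eps\bigl(|p|+\|Dw\|_{L^\infty}\bigr),
\]
and the a.e.\ hypothesis on $w$ converts the integral bound into $a(y)|p+Dw_\eps(y)|\leq \mu+C\eps$ on $U_\eps$. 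Since $w_\eps$ is smooth, this is a classical and hence viscosity subsolution; passing $\eps\to 0$ with the stability of viscosity subsolutions under locally uniform convergence and continuous Hamiltonians delivers the viscosity inequality for $w$ in $U$.

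The only essential ingredient is the convexity-driven use of Jensen's inequality in the regularisation step; without it, $a(y)|p+Dw_\eps|$ cannot be controlled by the pointwise a.e.\ bound and the equivalence fails. By contrast, the hypothesis $a=0$ on $\partial U$ plays no direct role in the proof, since the claim concerns only the open set $U$; the boundary vanishing of $a$ is needed later in the paper for the Lipschitz bounds on $m_\mu$, not here.
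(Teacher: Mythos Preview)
Your proof is correct and complete. The paper does not actually prove this lemma: it is introduced as a ``well known fact'' and stated without proof, so there is no argument in the paper to compare against. Your treatment---Rademacher for the forward direction, mollification plus Jensen for the convex-in-gradient Hamiltonian in the reverse direction, followed by stability of viscosity subsolutions under locally uniform convergence---is precisely the standard route one finds in the viscosity-solution literature for this equivalence.

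One small remark: your mollification step invokes a Lipschitz bound $|a(y)-a(y-z)|\le L|z|$, which the lemma as stated does not assume (it only requires $a>0$ in $U$ and $a=0$ on $\partial U$). This is harmless in context, since the paper's standing hypothesis (A2) makes $a$ Lipschitz; and even without it, replacing $L\eps$ by the local modulus of continuity of $a$ on compact subsets of $U$ yields the same conclusion after passing to the limit. Your observation that the boundary condition $a=0$ on $\partial U$ plays no role in this particular lemma is also correct.
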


%------------------------------------------------------------
\medskip\section{Examples of stationary random environments satisfying $(S1),(S2)$ and $(S3)$}\label{sec::examples}
%------------------------------------------------------------

We present several examples of random media, some occurring in percolation theory, that satisfy our assumptions.  

%------------------------------------------------------------
\begin{example}[\em Site percolation]\label{ex::percolation}\rm
%------------------------------------------------------------
We briefly review a site percolation structure in $\R^n$. The space is  divided into cubes of size $1$ with vertices on $\Z^n$ and each cube is painted, independently, white or black with probability $p$ or $1-p$ respectively, where $0\leq p\leq1$. Cubes are always assumed to be closed, unless otherwise stated.

For each $\omega\in\Omega$, let $F(\omega)$  be the union of all black cubes. The interior of $F(\omega)$ and $\R^n\setminus F(\omega)$ consist of mutually disjoint open connected components that we denote  respectively  by $\big(U_i(\omega)\big)_{i\in I^-}$ and $\big(U_i(\omega)\big)_{i\in I^+}$.  It is known (see Grimmett \cite{Gr:99}) that there exists a critical threshold $p_c>0$ such that if $p>p_c$, then a.s. there only exists one unbounded connected component $U_\infty(\omega)$ of $\R^n\setminus F(\omega)$. Figure \ref{Fig:percolation} illustrates a site percolation model where $U_\infty(\omega)$ is painted in blue. Let $\Pi^n$ be the set of unit cubes whose vertices are points of $\Z^n$. The probability space $(\Omega,\cF, \bP)$ is $\Omega := \{0,1\}^{\Pi^n}$, $\cF =\{\cU:\cU\subset\Omega\}$ and
$\bP=\left(p\delta_1 + (1-p)\delta_0\right)^{\otimes\Pi^n}.$ 
The translation operators $(\tau_z)_{z\in\Z^n}$, given by $\tau_z :\Omega\to\Omega$ with $(\tau_z \omega) (\pi) : = \omega(z+\pi), \hbox{ for all }\pi\in \Pi^n,$ are ergodic.
\begin{figure}[!ht]
    \centering\includegraphics[width=0.3\linewidth]{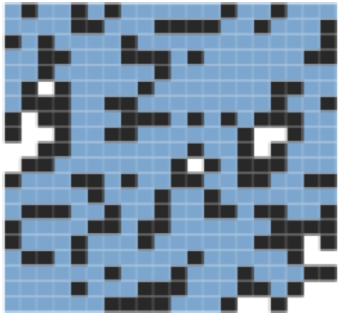} 
    \caption{Bernoulli percolation.}
    \label{Fig:percolation}
\end{figure}

Theorem \ref{thm::homog-time} holds for any velocity $a(\cdot,\omega)$ such that $a(\cdot,\omega)>0$ in $U_i(\omega)$ with $i\in I^+$,  $a(\cdot,\omega)<0$ in $U_i(\omega)$ with $i\in I^-$ and $a(\cdot,\omega)=0$ on $\partial U_i(\omega)$ with $i\in I^\pm$, which is bounded, Lipschitz continuous and stationary with respect to $(\tau_z)_{z\in\Z^n}$. A particular choice of $a(\cdot,\omega)$ is the distance function to the boundaries of $U_i(\omega)$, that is,
\[a(x,\omega) = 
      \begin{cases}
          \min\big( d(x,\partial U_i(\omega)),1/2 \big) & \hbox{ if }x\in U_i(\omega), \ \text{with} \ i \in I^+,\\
         -\min\big( d(x,\partial U_i(\omega)),1/2 \big) & \hbox{ if }x\in U_i(\omega), \ \text{with} \ i \in I^-.
      \end{cases}
\]
Clearly $a$ satisfies $(A1),(A2),(A3)$ and $(S1),(S2)$ hold for all $U_i(\omega)$ with $i\in I_0$, while  $U_\infty(\omega)$ satisfies $(S3)$. Indeed, for any $y\in\R^n$ with $|y|=1$ and $\eps>0$, we have
\begin{eqnarray*}
    \bP\left[\left\{\omega\in\Omega : \left|\frac{t_j(\omega)-s_j(\omega)}{t_j(\omega)}\right|>\eps\right\}\right] 
    & = &
    \bP\left[\left\{\omega\in\Omega : \left|\frac{t_1(\omega)-s_1(\omega)}{t_j(\omega)}\right|>\eps\right\}\right]\\ 
    & \leq &
    \bP\Big[\left\{\omega\in\Omega : \left|t_1(\omega)-s_1(\omega)\right|>j\eps\right\}\Big] \leq p^{j\eps}
\end{eqnarray*}
and, thus,
\begin{eqnarray*}
    \sum_{j=1}^\infty \bP\left[\left\{\omega\in\Omega : \left|\frac{t_j(\omega)-s_j(\omega)}{t_j(\omega)}\right|>\eps\right\}\right] 
    \leq \sum_{j=1}^{\infty} p^{\eps j} = \frac{1}{1-p^\eps}.
\end{eqnarray*}
In view of Theorem \ref{thm:effective_Hamiltonian} all effective Hamiltonians are null, except for $\ovl H_\infty$ corresponding to $U_\infty(\omega)$. 
Theorem \ref{thm::homog-time} then yields, the $L^\infty$-weak $\star$ convergence, as $\eps\to0$, of the $u^\eps$'s  to 
$\ovl u = (1-\theta)u_0 + \theta \ovl u_\infty$
with $\theta= \bE [{\bf 1}_{U_\infty(\omega)}]$, where $\ovl u_\infty$ is the solution of \eqref{eq:HJs_avg} corresponding to $\ovl H_\infty$.

In the particular case of a periodic checkerboard configuration and of a periodic velocity $a$, all $H_i\equiv0$  and, hence, $u^\eps$ converges uniformly to the initial data $u_0$. We recover thus the result of  \cite{CLS:09}, also known as {trapping}.
\end{example}

%------------------------------------------------------------
\begin{example}[\it Site percolation on regular latices]\rm
%------------------------------------------------------------
Theorem \ref{thm::homog-time} holds for general percolation structures given by regular lattices, such as those displayed in Figure \ref{Fig:percolation2}, where as before, each cell of the lattice is painted white or black with probability $p$ and $1-p$  respectively. 
\begin{figure}[!ht]
    \centering\includegraphics[width=0.85\linewidth]{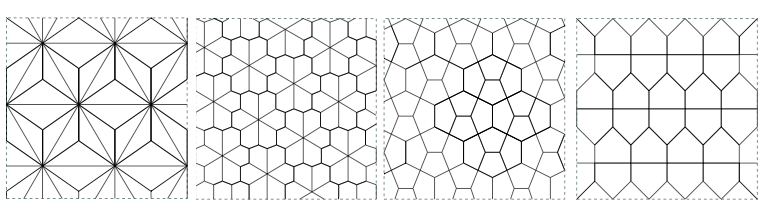} 
    \caption{Percolation on Archimedean Lattices (all polygons are regular and each vertex is surrounded by the same sequence of polygons).}
    \label{Fig:percolation2}
\end{figure}
\end{example}

%------------------------------------------------------------
\begin{example}[\it Site percolation with isolated obstacles]\rm
%------------------------------------------------------------
Consider a site percolation $\Pi^n$ as in Example \ref{ex::percolation}, where now each cube $\pi\in\Pi^n$ is either empty with probability $p$ or compactly contains a ball of fixed size  with probability $1-p$. Let $a$ be a Lipschitz continuous, bounded and translation invariant velocity, which is negative inside each ball and non-negative outside. Then there exists one infinite connected component $U_\infty(\omega)$ with effective Hamiltonian $\ovl{H}_\infty>0$ and, in view of Theorem \ref{thm::homog-time}, as $\eps\to0$, the  $u^\eps$'s converge weakly to $\ovl u = (1-\theta)u_0 + \theta \ovl u_\infty, $
where $\theta= \bE[{\bf 1}_{U_\infty(\omega)}]$ and $\ovl u_\infty$ is the solution  of \eqref{eq:HJs_avg} associated to  $U_\infty(\omega)$. In particular, in the periodic setting, this construction yields a periodic configuration of small holes and we recover the result of  \cite{CLS:09}.
\end{example}

%------------------------------------------------------------
\begin{example}[\it Poisson cloud]\rm
%------------------------------------------------------------
Theorem \ref{thm::homog-time} applies to Poisson environments. Consider a random set $F(\omega) = \bigcup_{i=1}^\infty \ovl B_{r_i}(x_i,\omega)$, where the points $(x_i)_{i=1}^\infty$ are given by a Poisson point process and the radii  $(r_i)_{i=1}^\infty$ are  independent and identically distributed on $(0,\infty)$ (see Figure \ref{Fig:poisson}). Let $(U_i(\omega))_{i\in I^+}$ and $(U_i(\omega))_{i\in I^-}$ be respectively the connected components of $\R^n\setminus F(\omega)$ and the interior of $F(\omega)$ and $a$ be a Lipschitz continuous, bounded and translation invariant velocity which is negative inside each ball and positive outside. One can easily check,  as in Example \ref{ex::percolation}, that  assumptions $(S1),(S2)$ and $(S3)$ are satisfied.
 \begin{figure}[!ht]
    \centering\includegraphics[width=0.4\linewidth]{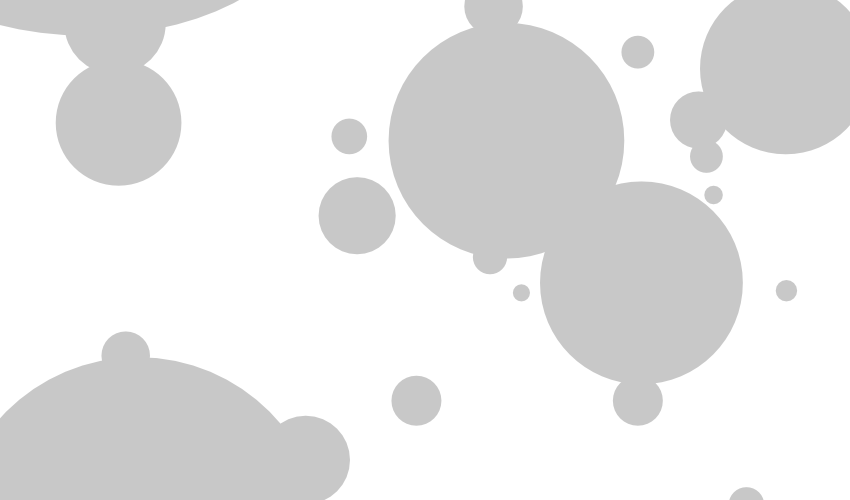} 
    \caption{Homogenization holds for Poisson point process.}
    \label{Fig:poisson}   
\end{figure}
\end{example}

%=================================================================================
\medskip\section{The Metric Problem}\label{sec::metric-pb}
%=================================================================================

Fix $\omega\in\Omega$ and assume that $U(\omega)$ is an unbounded connected component of $\{x\in\R^n  : a(x,\omega)>0\}$ with  $\partial U(\omega) \subseteq \{x\in\R^n : a(x,\omega) = 0\}$. For each fixed constant $\mu\geq 0$ and each $z\in U(\omega)$ we consider the eikonal-type problem
\begin{equation}\label{eq:metric-pb}
  \begin{cases}
  	a(y,\omega) |Dm_\mu| = \mu & \hbox{ in }\ U(\omega) \setminus \{z\},\\
  	m_\mu(\cdot,z,\omega)= 0& \hbox{ at } \{z\}.
  \end{cases}
\end{equation}
We show below that there exists a maximal, positive, subadditive, locally Lipschitz continuous subsolution, which blows-up on the boundary of $U(\omega)$. Similar arguments remain true for the unbounded connected components of $\{x\in\R^n : a(x,\omega)<0\}$, in which case \eqref{eq:metric-pb}  is well posed for each fixed constant $\mu\leq 0$  and has instead a minimal, negative, superadditive supersolution.

%------------------------------------------------------------
\subsection*{Maximal subsolutions}
%------------------------------------------------------------
We first define the maximal subsolution of \eqref{eq:metric-pb} and establish some basic properties.\smallskip

%------------------------------------------------------------
For each $\mu \ge 0$ and $z\in U(\omega)$ let $m_\mu(\cdot,z,\omega):U(\omega)\to[0,\infty)$ be given by
\begin{eqnarray}\label{eq:def-max-subsol}
    	m_\mu(y,z,\omega) : = 
	\sup \big \{ w(y,\omega): w(\cdot,\omega) \in \cL_\mu(U(\omega)), w(z,\omega)=0,\;
	a(y,\omega)|Dw| \le \mu \ \text{in} \ U(\omega)  \big \}.
\end{eqnarray}
%------------------------------------------------------------
Recall that, in view of Lemma \ref{lem:equiv-soln}, the differential inequality in \eqref{eq:def-max-subsol} may be interpreted either in the viscosity sense or in the almost everywhere sense.

Note that $m_\mu(\cdot,z,\omega)$ is well defined in $U(\omega)$ for all $\mu\geq 0$. Indeed, the set of admissible $w$ in \eqref{eq:def-max-subsol} is nonempty, since $w\equiv 0$ satisfies all the requirements, a fact which also implies that $m_\mu(\cdot,z,\omega) \geq 0$. Moreover, in view of the Lipschitz bounds we establish below, $m_\mu(y,z,\omega)$ is finite for all $y\in U(\omega)$. Also, observe that for $\mu=0$, $m_\mu(\cdot,z,\omega)\equiv 0$ in $U(\omega)$. Henceforth we only deal with $\mu>0$.

The next lemma summarizes two of the most important properties of $m_\mu$, namely that is maxi-mal and locally Lipschitz continuous in $U(\omega)$. 

%------------------------------------------------------------
\begin{lemma}\label{lemma::Lipschitz}
Assume $(S2)$. For each $z\in U(\omega)$, $m_\mu(\cdot,z,\omega)\in \cS_\mu(U(\omega))$ 
and it is a maximal subsolution of \eqref{eq:metric-pb}, that is,  if $w\in\cL_\mu(U(\omega))$ is a subsolution of $ a(y,\omega)|Dw|\le\mu$ in $U(\omega)$, then $w(\cdot)-w(z)\leq m_\mu(\cdot,z,\omega)$.
\end{lemma}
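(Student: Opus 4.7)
The plan is to derive both the regularity $m_\mu \in \cS_\mu(U(\omega))$ and the maximality directly from the supremum formula \eqref{eq:def-max-subsol}, essentially by checking that $\cS_\mu$ is stable under taking suprema and invoking the standard Perron-type stability of viscosity subsolutions.

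First I would verify pointwise finiteness: the admissible set is nonempty since $w\equiv 0$ satisfies every requirement, so $m_\mu\ge 0$. Given any $y\in U(\omega)$, since $a(y,\omega), a(z,\omega)>0$, we can pick $\delta\in(0,\delta_0)$ (with $\delta_0$ from $(S2)$) such that $y,z\in U^\delta(\omega)$. Each admissible $w\in\cL_\mu(U(\omega))$ vanishes at $z$, so the defining inequality of $\cS_\mu$ yields $w(y)\leq(\mu/\delta+\eta)|y-z|$, and taking the sup over admissible $w$ shows $m_\mu(y,z,\omega)<\infty$.

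Next, to show $m_\mu(\cdot,z,\omega)\in\cS_\mu(U(\omega))$, I would fix $\eta>0$ small and let $\bar\delta(\eta)$ be the common threshold from the definition of $\cS_\mu$; crucially, $\bar\delta(\eta)$ is independent of the particular function in the class. Then for $\delta\in(0,\bar\delta(\eta))$ and $y_1,y_2\in U^\delta(\omega)$, every admissible $w$ satisfies $|w(y_1)-w(y_2)|\leq(\mu/\delta+\eta)|y_1-y_2|$. A standard $\varepsilon$-maximizer argument — pick $w_\varepsilon$ admissible with $w_\varepsilon(y_1)>m_\mu(y_1,z,\omega)-\varepsilon$, use $w_\varepsilon(y_2)\leq m_\mu(y_2,z,\omega)$ together with the bound on $w_\varepsilon$, and finally let $\varepsilon\to 0$ and swap the roles of $y_1$ and $y_2$ — transfers the same inequality to $m_\mu$. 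This also makes $m_\mu$ continuous on each $U^\delta(\omega)$, which by $(S2)$ is connected for $\delta<\delta_0$. The subsolution property $a(y,\omega)|Dm_\mu|\leq\mu$ in $U(\omega)$ then follows from the standard stability of viscosity subsolutions under suprema (applied to the upper semicontinuous envelope, which coincides with $m_\mu$ by the continuity just obtained), together with Lemma \ref{lem:equiv-soln} which grants the equivalence between the viscosity and the a.e. notions for this equation.

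Finally, maximality is immediate from the construction. If $w\in\cL_\mu(U(\omega))$ satisfies $a(y,\omega)|Dw|\leq\mu$ in $U(\omega)$, then $\tilde w:=w-w(z)$ remains in $\cL_\mu(U(\omega))$ and is still a subsolution (both properties are invariant under additive constants), with $\tilde w(z)=0$; hence $\tilde w$ is admissible in \eqref{eq:def-max-subsol} and $w(y)-w(z)=\tilde w(y)\leq m_\mu(y,z,\omega)$. The main obstacle is really bookkeeping around the parameter $\bar\delta(\eta)$: one must ensure that the $\cS_\mu$-threshold does not degenerate when passing to the supremum, and this is precisely what the uniformity clause in the definition of $\cS_\mu$ (that $\bar\delta$ is independent of $\omega$, and implicitly of the individual function in the class) is designed to guarantee. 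Everything else reduces to the fact that a family of functions sharing a common pointwise Lipschitz modulus on $U^\delta(\omega)$ inherits that modulus in its supremum, plus the stability of viscosity subsolutions under suprema.
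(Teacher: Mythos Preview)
Your proposal is correct and follows essentially the same approach as the paper's own proof, which is a two-sentence sketch: the $\cS_\mu$-membership is declared a ``direct consequence of the definition of $\cL_\mu(U(\omega))$ and $(S2)$'', and the subsolution property is attributed to stability of viscosity subsolutions under suprema. You have simply unpacked these two remarks in detail --- the $\varepsilon$-maximizer argument for transferring the Lipschitz modulus to the supremum, and the observation that $\bar\delta(\eta)$ must be read as attached to the class rather than to individual functions --- and added the (equally routine) maximality verification.
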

%------------------------------------------------------------

\begin{proof}
The result is a direct consequence of the definition of $\cL_\mu(U(\omega))$ and $(S2)$. The fact that $m_\mu(\cdot,z,\omega)$ is a subsolution of \eqref{eq:metric-pb} is immediate, since it is the supremum of a collection of subsolutions (see \cite{CIL:92}.)
\end{proof}

Since $a(\cdot,\omega)$ is degenerate on $\partial U(\omega)$, the Hamiltonian is not coercive and thus the solutions are not Lipschitz up to the boundary. We show next that the $m_\mu$'s blow up on the boundary of $U(\omega)$ with a logarithmic rate.
%------------------------------------------------------------
\begin{proposition}[Blow-up on the boundary]\label{prop::blow-up}
Assume $(A2)$ and $(S2)$. For all $y,z\in U(\omega)$, 
\begin{equation*}
    	m_\mu(y,z,\omega) \geq  \frac{\mu}{{L}} \left|
    	\log{a(y,\omega)}-\log{a(z,\omega)}\right|
\end{equation*}
and, for each fixed $z\in U(\omega)$, 
	\[\lim_{\substack{{\hbox{dist}(y,\partial U(\omega))}\to 0, \; y\in U(\omega)}} m_\mu (y,z,\omega) = +\infty. \]
\end{proposition}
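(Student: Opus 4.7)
The plan is to exhibit an explicit Lipschitz subsolution of the metric equation that grows at the rate $\frac{\mu}{L}|\log a(y,\omega)-\log a(z,\omega)|$ and then invoke the maximality of $m_\mu$ from Lemma~\ref{lemma::Lipschitz}. The natural candidate is $\psi(y):=\frac{\mu}{L}\bigl|\log a(y,\omega)-\log a(z,\omega)\bigr|$, $y\in U(\omega)$: by $(A2)$, $|Da(\cdot,\omega)|\le L$ a.e., so at any point of differentiability of $\psi$ in $U(\omega)$,
\[
a(y,\omega)\,|D\psi(y)| \;=\; \frac{\mu}{L}\,|Da(y,\omega)| \;\le\; \mu.
\]
By Lemma~\ref{lem:equiv-soln}, $\psi$ is therefore a viscosity subsolution of $a|Dw|\le\mu$ on $U(\omega)$, and obviously $\psi(z)=0$.

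The obstruction to using $\psi$ directly in~\eqref{eq:def-max-subsol} is that $\psi\notin\cL$: since $a(\cdot,\omega)$ vanishes on $\partial U(\omega)$, $\psi$ blows up there. I therefore truncate: for $R>0$, set
\[
\psi_R(y) \;:=\; R \wedge \frac{\mu}{L}\bigl|\log a(y,\omega)-\log a(z,\omega)\bigr| \quad\text{for } y\in U(\omega),
\]
and extend $\psi_R\equiv R$ on $\R^n\setminus U(\omega)$. The inclusion $\{\psi<R\}\subset\{a(\cdot,\omega)\ge a(z,\omega)\,e^{-RL/\mu}\}$ keeps the non-constant region of $\psi_R$ uniformly bounded away from $\partial U(\omega)$ (at distance at least $a(z,\omega)e^{-RL/\mu}/L$, by the Lipschitz bound on $a$), which makes $\psi_R$ globally Lipschitz on $\R^n$; hence $\psi_R\in\cL$. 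On each $U^\delta(\omega)$, the differential bound above yields $|D\psi_R|\le\mu/\delta$ a.e., and by $(S2)$, which ensures $U^\delta(\omega)$ is connected, this upgrades to a Lipschitz bound with constant $\mu/\delta$, so $\psi_R\in\cS_\mu(U(\omega))$. Altogether $\psi_R\in\cL_\mu(U(\omega))$, while the minimum with the constant $R$ preserves the a.e.\ inequality $a|D\psi_R|\le\mu$ in $U(\omega)$.

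Since $\psi_R(z)=0$, the maximality from Lemma~\ref{lemma::Lipschitz} gives $m_\mu(y,z,\omega)\ge\psi_R(y)$ on $U(\omega)$ for each $R>0$. Sending $R\to\infty$ yields the stated lower bound
\[
m_\mu(y,z,\omega) \;\ge\; \frac{\mu}{L}\bigl|\log a(y,\omega)-\log a(z,\omega)\bigr|.
\]
For the blow-up, continuity of $a(\cdot,\omega)$ together with $a=0$ on $\partial U(\omega)$ force $a(y,\omega)\to 0$, and hence $|\log a(y,\omega)|\to\infty$, as $\mathrm{dist}(y,\partial U(\omega))\to 0$, so the right-hand side diverges. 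The one delicate point is the verification that $\psi_R\in\cL_\mu(U(\omega))$: the global Lipschitz property on $\R^n$ rests on the cap $R$ keeping us bounded away from $\partial U(\omega)$, while the $\cS_\mu(U(\omega))$ bound crucially relies on the $\delta$-connectedness hypothesis $(S2)$ to convert an a.e.\ gradient estimate into a true Lipschitz estimate on $U^\delta(\omega)$.
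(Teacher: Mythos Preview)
Your proof is correct and follows essentially the same approach as the paper: both construct a truncated logarithmic barrier and invoke the maximality of $m_\mu$. The paper's barrier is $b^\delta(y)=-\frac{\mu}{L}\log a(y,\omega)+\frac{\mu}{L}\log a(z,\omega)$ on $U^\delta(\omega)$, extended by the constant $-\frac{\mu}{L}\log\delta+c$ outside, and then takes $\sup_\delta$; you build the absolute value into the barrier from the start and cap at level $R$, which is the same truncation under the reparametrization $\delta=a(z,\omega)e^{-RL/\mu}$. The paper recovers the absolute value a posteriori from $m_\mu\ge 0$.

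One point to tighten: your assertion that connectedness of $U^\delta(\omega)$ upgrades the a.e.\ bound $|D\psi_R|\le\mu/\delta$ to a Euclidean Lipschitz bound with constant $\mu/\delta$ is not valid in general---an a.e.\ gradient bound on an open connected set only controls the intrinsic (geodesic) Lipschitz constant. The conclusion is nevertheless correct, because on $U^\delta(\omega)$ your $\psi_R$ coincides with the globally defined function
\[
\min\Bigl(R,\ \tfrac{\mu}{L}\bigl|\log\max(a(\cdot,\omega),\delta)-\log a(z,\omega)\bigr|\Bigr),
\]
which is Lipschitz on all of $\R^n$ with constant $\mu/\delta$ (since $\max(a,\delta)$ is $L$-Lipschitz and bounded below by $\delta$). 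This is exactly the mechanism implicitly underlying the paper's $b^\delta$, so the gap is cosmetic rather than substantive.
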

%------------------------------------------------------------

\begin{proof}
Fix $z\in U(\omega)$ and  $\delta\in(0,\min(\delta_0, a(z,\omega)/2))$ with $\delta_0>0$ given by $(S2)$. Recall that in view of $(S2)$, $U^\delta(\omega) = \{x \in U(\omega): a(x,\omega)>\delta \}$ is connected and note that $z\in U^\delta(\omega)$. Define the barrier  
\begin{equation*}
    	b^\delta(y,\omega)=
      	\begin{cases}
      		-\frac{\mu}{L}  \log a(y,\omega) +c & \hbox{ in } \;\;\;U^\delta(\omega),\\
      		-\frac{\mu}{L} \log \delta + c& \hbox{ in } \;\;\;\R^n\setminus U^\delta(\omega),
      	\end{cases}
\end{equation*}
where  $c= (\mu/{L}) \log a(z,\omega)$. Since $a(\cdot,\omega)$ is differentiable a.e. in $U^\delta(\omega)$, 
\begin{equation*}
    	\|Db^\delta(\cdot,\omega)\|_{L^\infty(U^\delta(\omega))}
    	\; \leq \; \frac{\mu}{{L}} \frac{\|Da(\cdot,\omega)\|_{L^\infty}}
    	{\inf_{U^\delta(\omega)} a(\cdot,\omega)}
    	\; = \; \frac{\mu}{\delta}.
\end{equation*}
Thus $b^\delta(\cdot,\omega) \in{\cL_\mu}(U(\omega))$ with a global Lipschitz constant equal to $\mu/\delta$, $b^\delta(z,\omega)=0$ and satisfies  
\[a(y,\omega)|D b^\delta| = 0 \;\;\; \hbox{ in } \; U(\omega)\setminus U^\delta(\omega).\]
Therefore
\[a(y,\omega)|D b^\delta| \leq \mu \;\;\; \hbox{ in } \;U^\delta(\omega)\]
and, hence, $b^\delta$ is an admissible subsolution in \eqref{eq:def-max-subsol}. In view of the maximality of $m_\mu$, for all $y\in U(\omega)$,
	\[  m_\mu(y,z,\omega) \geq \sup_{\delta} \left(b^\delta(y,\omega)\right) =
    	-\frac\mu{{L}} \log a(y,\omega) + c.\] 
The claimed bound is now obvious from the positivity of $m_\mu(\cdot,\cdot,\omega)$ and the definition of $c$. 

We next remark that the Lipschitz continuity of $a(\cdot,\omega)$ and the fact that $a(\cdot,\omega) = 0$ on $ \partial U(\omega)$ imply, for all $y\in U(\omega)$ and $d(\cdot,\omega):= \text{dist}(\cdot,\partial U(\omega))$,
	\[ 0\leq a(y,\omega)  \;\leq\; \|Da(\cdot,\omega)\|_{\infty} d(y,\omega)
    	\;\leq\; {L} d(y,\omega).\]
It follows that, for each fixed $z\in U(\omega)$,
\begin{eqnarray*}
    	\lim_{\substack{{d(y,\omega)}\to 0\\ y\in U(\omega)}} m_\mu (y,z,\omega) \geq 
    	\lim_{\substack{{d(y,\omega)}\to 0\\ y\in U(\omega)}} 
      	\left(- \frac{\mu}{{L}} \log \big(L d(y,\omega)\big) + c\right) = +\infty.
\end{eqnarray*}
\end{proof}

We analyze further the behavior of the $m_\mu$'s near  $\partial U(\omega)$ and show that  they are  controlled from below by a cone. This fact is used in Proposition \ref{prop::liminf-H} to  investigate the properties of the effective Hamiltonian and is an indispensable estimate to establish the proof of homogenization.
%----------------------------------------------------
\begin{lemma}[Lipschitz growth near the boundary]\label{lemma::improve-Lip}
Assume $(A2)$ and  $(S2)$. For any $\eta>0$, there exists $\ovl \delta_0=\ovl \delta_0(\eta)>0$ such that, for all $\delta\in (0,\ovl \delta_0)$ and  all $y_1,z\in U^\delta(\omega)$,  $y_2 \in U(\omega)\setminus U^\delta(\omega)$,
\begin{equation}\label{eq:liminf}
    	m_\mu(y_2,z,\omega)-m_\mu(y_1,z,\omega) \geq 
    	-\left(\frac{\mu}{\delta}+{\eta}\right)|y_2-y_1|.
\end{equation}
\end{lemma}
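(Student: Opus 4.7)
My plan is to prove the bound by a maximality argument: construct a competitor $v$ in the variational definition of $m_\mu(\cdot,z,\omega)$ whose value at $y_2$ explicitly dominates the claimed lower bound, and then invoke the maximal character of $m_\mu$ to transfer the estimate.

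Fix $\eta>0$. By Lemma \ref{lemma::Lipschitz} and the definition of $\cS_\mu(U(\omega))$, there is $\ovl\delta_0=\ovl\delta_0(\eta)>0$ such that, for every $\delta\in(0,\ovl\delta_0)$ and every $y',y''\in U^\delta(\omega)$,
\[|m_\mu(y',z,\omega)-m_\mu(y'',z,\omega)|\le\Big(\frac{\mu}{\delta}+\frac{\eta}{2}\Big)|y'-y''|.\]
Taking $y''=z$ yields the auxiliary bound $m_\mu(y_1,z,\omega)\le(\mu/\delta+\eta/2)|y_1-z|$. I then introduce the cone $\phi(y):=m_\mu(y_1,z,\omega)-(\mu/\delta+\eta/2)|y-y_1|$ and the candidate $v(y):=\max\{m_\mu(y,z,\omega),\phi(y)\}$ on $U(\omega)$. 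The displayed Lipschitz inequality immediately gives $\phi(z)\le 0$, hence $v(z)=0$, and $\phi\le m_\mu(\cdot,z,\omega)$ on $U^\delta(\omega)$, hence $v\equiv m_\mu(\cdot,z,\omega)$ on that set.

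The next step is to verify that $v$ is a viscosity subsolution of $a(y,\omega)|Dv|\le\mu+\delta\eta/2$ in $U(\omega)$. On $U^\delta(\omega)$ this is automatic since $v=m_\mu(\cdot,z,\omega)$. On $U(\omega)\setminus U^\delta(\omega)$, the inequality $a\le\delta$ gives $a|D\phi|\le\delta(\mu/\delta+\eta/2)=\mu+\delta\eta/2$, and the maximum of two viscosity subsolutions of a convex $1$-homogeneous Hamiltonian is a subsolution. The positive $1$-homogeneity of the metric problem then yields $m_{\mu+\delta\eta/2}(\cdot,z,\omega)=(1+\delta\eta/(2\mu))\,m_\mu(\cdot,z,\omega)$, and the maximal character of $m_{\mu+\delta\eta/2}$ gives $v\le(1+\delta\eta/(2\mu))\,m_\mu(\cdot,z,\omega)$. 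Evaluating at $y_2$ and using $v(y_2)\ge\phi(y_2)$ produces the preliminary inequality
\[m_\mu(y_2,z,\omega)\ge\frac{m_\mu(y_1,z,\omega)-(\mu/\delta+\eta/2)|y_1-y_2|}{1+\delta\eta/(2\mu)}.\]

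The final step --- and the main technical obstacle --- is to absorb the multiplicative factor $(1+\delta\eta/(2\mu))^{-1}$ into the additive $\eta/2$ slack built into the cone, so as to pass from this preliminary bound to $m_\mu(y_2,z,\omega)\ge m_\mu(y_1,z,\omega)-(\mu/\delta+\eta)|y_1-y_2|$. The correction is of order $(\delta\eta/\mu)\,m_\mu(y_1,z,\omega)$, which via the Lipschitz estimate from $z$ is comparable to $\eta|y_1-z|$; converting this into an $\eta|y_1-y_2|$ error calls for a case split (the conclusion is trivial in the regime $m_\mu(y_1,z,\omega)\le(\mu/\delta+\eta)|y_1-y_2|$, since $m_\mu\ge 0$) together with an appropriate shrinking of $\ovl\delta_0(\eta)$ in the complementary regime. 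I expect this arithmetic absorption, rather than any conceptual difficulty, to be the subtle point of the argument.
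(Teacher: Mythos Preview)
Your plan is close in spirit to the paper's, but the ``arithmetic absorption'' you flag as the subtle point is in fact a genuine gap, not a routine cleanup. After your step (9) the additive correction you must absorb is of size
\[
\frac{\delta\eta}{2\mu}\Big(m_\mu(y_1,z,\omega)-(\mu/\delta+\eta/2)|y_1-y_2|\Big),
\]
and a direct computation shows the absorption into $(\eta/2)|y_1-y_2|$ goes through \emph{only} when $m_\mu(y_1,z,\omega)\le(2\mu/\delta+\eta)|y_1-y_2|$. Your case split disposes of $m_\mu(y_1,z,\omega)\le(\mu/\delta+\eta)|y_1-y_2|$, but nothing prevents $m_\mu(y_1,z,\omega)$ from being much larger than $(2\mu/\delta+\eta)|y_1-y_2|$: take $y_1$ far from $z$ (so $m_\mu(y_1,z,\omega)$ is large) and $y_2$ just across the level set $\{a=\delta\}$ from $y_1$ (so $|y_1-y_2|$ is small). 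Since $\ovl\delta_0$ must be chosen uniformly in $y_1,y_2,z$, shrinking it cannot cure this; the obstruction persists for every $\delta$. A secondary technicality: your $v=\max(m_\mu,\phi)$ inherits the blow-up of $m_\mu$ on $\partial U(\omega)$, so $v\notin\cL$ and is not directly admissible for $m_{\mu+\delta\eta/2}$; you would need a cut-off $\min(v,M)$ first.

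The paper sidesteps the multiplicative factor entirely by never passing through $m_{\mu+\delta\eta/2}$. It takes the full McShane extension
\[
w_\mu(y,\omega)=\sup_{x\in U^\delta(\omega)}\Big\{m_\mu(x,z,\omega)-\Big(\frac{\mu}{\delta}+\eta\Big)|y-x|\Big\},
\]
which is globally $(\mu/\delta+\eta)$-Lipschitz (hence already in $\cL$) and coincides with $m_\mu$ on $U^\delta(\omega)$. The paper then asserts directly, invoking the blow-up proposition, that $m_\mu(y_2,z,\omega)\ge w_\mu(y_2,\omega)$; since $w_\mu(y_1,\omega)=m_\mu(y_1,z,\omega)$ and $w_\mu$ has the right global Lipschitz constant, the one-sided bound follows in one line. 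The point is that the McShane extension is compared to $m_\mu$ itself, not to a rescaled $m_{\mu'}$, so no error proportional to $m_\mu(y_1,z,\omega)$ ever appears.
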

%----------------------------------------------------

\begin{proof}
Fix $\eta>0$, let $\ovl\delta(\eta)>0$ and $\delta_0>0$  be given by $\cS_\mu(U(\omega))$ and  $(S2)$ and set $\ovl \delta_0 (\eta) := \min(\delta_0,\ovl\delta(\eta))$. In light of Lemma \ref{lemma::Lipschitz}, $m_\mu(\cdot,z,\omega)$ is Lipschitz continuous on $U^\delta(\omega)$ with Lipschitz constant at most $\mu/\delta+{\eta}$. We extend $m_\mu(\cdot,z,\omega)$ %|_{U^\delta(\omega)}$ 
to the whole space $\R^n$ by
	\[ w_\mu(y,\omega) := \sup_{x\in U^\delta(\omega)}
   	\left\{ m_\mu(x,z,\omega) - \left(\frac{\mu}{\delta}+\eta\right) |y-x|\right\},\]
so that $\|Dw_\mu\|_{\infty}\leq {\mu}/{\delta}+{\eta}.$
It is then immediate  from Proposition \ref{prop::blow-up} that
	\[ m_\mu(y_2,z,\omega) - m_\mu(y_1,z,\omega)\geq 
   	w_\mu(y_2,\omega) - w_\mu(y_1,\omega) \geq -\left(\frac{\mu}{\delta}+{\eta}\right)|y_2-y_1|.\]
\end{proof}

We use the maximality of $m_\mu$ and its behavior near the boundary to show next that $m_\mu$ is a pseudo-metric and thus a subadditive quantity in $U(\omega)$.

%------------------------------------------------------------
\begin{lemma}[Pseudo-metric]\label{lem::mp-sym} 
Assume $(A2)$ and $(S2)$. Then $m_\mu(\cdot,\cdot,\omega):U(\omega)\times U(\omega)\to[0,\infty)$ is symmetric and subadditive, that is, for all $x,y,z\in U(\omega)$,
	\[m_\mu(y,z,\omega)=m_\mu(z,y,\omega) \;\;\;\hbox{ and } \;\;\;
	  m_\mu(x,z,\omega)\leq m_\mu(x,y,\omega) + m_\mu(y,z,\omega).\]
\end{lemma}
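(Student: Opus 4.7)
The plan is to derive both symmetry and subadditivity directly from the maximality property stated in Lemma \ref{lemma::Lipschitz}, applied to a well-chosen shift (and, for symmetry, negation) of an arbitrary admissible competitor in \eqref{eq:def-max-subsol}. The point is that $m_\mu(\cdot,z,\omega)$ itself is not in $\cL_\mu(U(\omega))$ (it blows up on $\partial U(\omega)$ by Proposition \ref{prop::blow-up}), so one cannot hope to apply the maximality inequality to $m_\mu$ directly; instead, one works at the level of the sup-defining family and then passes to the supremum.

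For the \textbf{subadditivity} $m_\mu(x,z,\omega)\leq m_\mu(x,y,\omega)+m_\mu(y,z,\omega)$, I fix $x,y,z\in U(\omega)$ and take any competitor $w\in\cL_\mu(U(\omega))$ in \eqref{eq:def-max-subsol} with $w(z,\omega)=0$ and $a(\cdot,\omega)|Dw|\le\mu$ in $U(\omega)$. The shifted function $w(\cdot)-w(y,\omega)$ lies in the same class $\cL_\mu(U(\omega))$, vanishes at $y$, and remains a subsolution; Lemma \ref{lemma::Lipschitz} applied with base point $y$ then yields $w(x,\omega)-w(y,\omega)\le m_\mu(x,y,\omega)$. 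Since $w$ is itself admissible for $m_\mu(\cdot,z,\omega)$, its value at $y$ obeys $w(y,\omega)\le m_\mu(y,z,\omega)$. Adding the two inequalities gives $w(x,\omega)\le m_\mu(x,y,\omega)+m_\mu(y,z,\omega)$, and taking the supremum over all admissible $w$ produces the triangle inequality.

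For the \textbf{symmetry} $m_\mu(y,z,\omega)=m_\mu(z,y,\omega)$, I exploit the invariance of the inequality $a(\cdot,\omega)|Dw|\le\mu$ under $w\mapsto-w$. Indeed, by Lemma \ref{lem:equiv-soln} a Lipschitz function $w$ is a viscosity subsolution if and only if it satisfies the inequality a.e., and since $|D(-w)|=|Dw|$ a.e., Lemma \ref{lem:equiv-soln} then guarantees that $-w$ is also a viscosity subsolution. Given an admissible $w$ for $m_\mu(\cdot,z,\omega)$, I set $\tilde w:=-w+w(y,\omega)$; it lies in $\cL_\mu(U(\omega))$ (the classes $\cL$ and $\cS_\mu(U(\omega))$ are both preserved under negation and under addition of constants), vanishes at $y$, and is a subsolution. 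Applying Lemma \ref{lemma::Lipschitz} with base point $y$ at the point $z$ gives $\tilde w(z,\omega)\le m_\mu(z,y,\omega)$, i.e., $w(y,\omega)\le m_\mu(z,y,\omega)$. Taking the supremum over $w$ yields $m_\mu(y,z,\omega)\le m_\mu(z,y,\omega)$; interchanging $y$ and $z$ gives the reverse inequality.

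The main subtlety I expect is precisely the sign-flip in the symmetry argument: $a|Dw|\le\mu$ is a ``one-sided'' viscosity condition, so it is not a priori obvious that $-w$ inherits the subsolution property. This is exactly where Lemma \ref{lem:equiv-soln} is indispensable, letting us pass through the a.e. formulation, in which invariance under $w\mapsto-w$ is transparent. Apart from this, both arguments are entirely routine manipulations of the maximality characterization, with the blow-up of $m_\mu$ on $\partial U(\omega)$ causing no trouble because all work is carried out on admissible competitors belonging to $\cL_\mu(U(\omega))$ before taking the final supremum.
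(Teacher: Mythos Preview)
Your proof is correct. The symmetry argument is essentially the paper's: both exploit the invariance of the admissible class under $w\mapsto -w$, and you are simply more explicit than the paper in invoking Lemma~\ref{lem:equiv-soln} to justify that $-w$ remains a viscosity subsolution. The subadditivity argument, however, is genuinely different and more economical. The paper proceeds by taking $w(\cdot)=m_\mu(\cdot,x,\omega)-m_\mu(z,x,\omega)$ as the candidate test function; since $m_\mu$ blows up on $\partial U(\omega)$ (Proposition~\ref{prop::blow-up}), this $w$ is \emph{not} in $\cL_\mu(U(\omega))$, and the paper must repair this with a cut-off $w_M=\min\{M,w\}$, appealing to the fact that the minimum of two subsolutions of a convex Hamiltonian is again a subsolution, before invoking maximality and letting $M\to\infty$. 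Your approach sidesteps the cut-off entirely: by working with an \emph{arbitrary} competitor $w\in\cL_\mu(U(\omega))$ from the start, shifting it, applying maximality at two base points, and only then passing to the supremum, you never leave the admissible class and never face the blow-up issue. The trade-off is that the paper's cut-off argument is a reusable device (they flag it in the remark following the lemma and deploy it again in Proposition~\ref{prop::well-posed-mp}), whereas your route, while cleaner here, does not set that up.
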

%------------------------------------------------------------

\begin{proof} Let $z\in U(\omega)$ and rewrite the formula for the maximal subsolution as
\begin{eqnarray*}
    	m_\mu(y,z,\omega) & = &
      	\sup \left\{ w(y,\omega)-w(z,\omega): w(\cdot,\omega) \in {\cL_\mu}(U(\omega))
      	\hbox{ and } a(y,\omega)|Dw| \le \mu \ \text{in} \ U(\omega) \right \}.
\end{eqnarray*}
For each $w(\cdot,\omega)\in{\cL_\mu}(U(\omega))$, consider its reflection $\widetilde w(y,\omega) := - w(y,\omega)$ and observe that
\begin{eqnarray*}
    	m_\mu(y,z,\omega) & = &
     	 \sup \left\{ \widetilde w(z,\omega)-\widetilde w(y,\omega): \widetilde w(\cdot,\omega) 
	 	\in {\cL_\mu}(U(\omega))
      	\hbox{ and } a(y,\omega)|D\widetilde w| \le \mu \ \text{in} \ U(\omega) \right \}\\ & = & 
      	m_\mu(z,y,\omega).
 \end{eqnarray*}
Fix now $x,z\in U(\omega)$ and recall that $m_\mu(\cdot,z,\omega)$ is the maximal subsolution of (\ref{eq:metric-pb}) in $U(\omega)$ such that $m_\mu(z,z,\omega)=0$. Since the supremum of viscosity subsolutions is still a subsolution, note that
      \[w(\cdot,\omega) = m_\mu(\cdot,x,\omega) - m_\mu(z,x,\omega)\]
is also a subsolution  of (\ref{eq:metric-pb}) in $U(\omega)$, $w(z,\omega)=0$ and, in view of Lemma \ref{lemma::Lipschitz}, $w\in \cS_\mu(U(\omega))$. But $w(\cdot,\omega)$ cannot be extended to a function in ${\cL}$ due to its behavior at $\partial U(\omega)$. To overcome this, we use the following cut-off argument. For each $M>0$,  let
\begin{equation*}
    	w_M(y,\omega) :=
	\begin{cases}
		\min\{M,w(y,\omega)\} &\hbox{ for }\ y\in U(\omega),\\
		M &\hbox{ for }\ y \in \R^n \setminus U(\omega).
	\end{cases}
\end{equation*}
In light of Proposition \ref{prop::blow-up}, for $M$ large enough, there exists $\delta_M\in(0,{\delta_0})$ such that $w_M=M$ in $\R^n \setminus U^{\delta_M}(\omega)$. It follows from Barron and Jensen \cite{BJ:90} and Barles \cite{B:93} that $w_M(\cdot,\omega)$ is a subsolution of (\ref{eq:metric-pb}) in $U(\omega)$, since it is the minimum of two subsolutions associated to a convex Hamiltonian in $U(\omega)$. Furthermore  $w_M\in \cL_\mu(U(\omega))$ and, hence, in view of the maximality of $m_\mu(\cdot,z,\omega)$, for all $y\in U(\omega)$,
	\[w_M(y,\omega) \leq m_\mu(y,z,\omega).\]
Letting $M\to\infty$ yields the claim.
\end{proof}

\begin{remark}\rm
In order to construct admissible test functions $w$ which are globally Lipschitz, we will frequently use the cut-off argument above.
\end{remark}

%------------------------------------------------------------
\subsection*{Well-posedness}
%------------------------------------------------------------

It follows from Perron's method for viscosity solutions (\cite{I:87}) that \eqref{eq:metric-pb} is well-posed for each $\mu>0$ and $m_\mu$ is actually a solution of \eqref{eq:metric-ext-linear}. Since the proof requires some special care  due to the fact that the equation is restricted to $U(\omega)$ and solutions are not globally Lipschitz, we present next some of the details.
%------------------------------------------------------------
\begin{proposition}[Well-posedness]\label{prop::well-posed-mp}
Assume $(A2)$ and $(S2)$. For each $\mu> 0$
and $z\in U(\omega)$, the maximal subsolution $m_\mu(\cdot,z,\omega)$  is a solution of
\begin{equation}\label{eq:metricpb-wp}
  	\begin{cases}
     		 a(y,\omega) |Dm_\mu| = \mu & \hbox{ in }\ U(\omega) \setminus \{z\},\\ 
      		a(y,\omega) |Dm_\mu| \leq \mu & \hbox{ in }\ U(\omega),\\ 
      		m_\mu(\cdot,z,\omega)= 0& \hbox{ at } \{z\}.
 	 \end{cases}
\end{equation}
\end{proposition}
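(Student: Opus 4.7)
The boundary condition $m_\mu(z,z,\omega)=0$ is immediate from the definition (take $w\equiv 0$ as a lower bound and observe that the supremum cannot exceed $0$ at $y=z$ by maximality paired with subadditivity, or simply note that any admissible $w$ vanishes at $z$). The global subsolution bound $a(y,\omega)|Dm_\mu|\le\mu$ in $U(\omega)$ is also immediate: $m_\mu$ is the pointwise supremum of a family of viscosity subsolutions and hence itself a viscosity subsolution (see \cite{CIL:92}). So the only real content is the supersolution inequality $a(y,\omega)|Dm_\mu|\ge\mu$ at every point of $U(\omega)\setminus\{z\}$, which, combined with the subsolution inequality, yields the equality in the first line of \eqref{eq:metricpb-wp}.

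The plan is a standard Perron bump argument adapted to the fact that the PDE lives only on $U(\omega)$ and that admissible functions must belong to the restricted class $\cL_\mu(U(\omega))$. Suppose, for contradiction, that the supersolution inequality fails at some point $y_0\in U(\omega)\setminus\{z\}$: then there exist a smooth $\phi\in\mathcal C^1$ and $\eta>0$ such that $m_\mu-\phi$ attains a strict local minimum at $y_0$ and $a(y_0,\omega)|D\phi(y_0)|\le\mu-2\eta$. By continuity of $a(\cdot,\omega)$ and $D\phi$, we may choose $r>0$ small enough that $\overline B_r(y_0)\subset U(\omega)\setminus\{z\}$ and $a(y,\omega)|D\phi(y)|\le\mu-\eta$ for all $y\in \overline B_r(y_0)$. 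By strictness of the local minimum we can choose $\kappa>0$ with $\phi+\kappa\le m_\mu$ on $\partial B_r(y_0)$ while $(\phi+\kappa)(y_0)>m_\mu(y_0)$.

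Now define the candidate bumped function
\[
\widetilde m_\mu(y,\omega):=
\begin{cases}
\max\{m_\mu(y,z,\omega),\,\phi(y)+\kappa\} & \text{if }y\in \overline B_r(y_0),\\
m_\mu(y,z,\omega) & \text{if }y\in U(\omega)\setminus \overline B_r(y_0).
\end{cases}
\]
Since $\phi+\kappa\le m_\mu$ on $\partial B_r(y_0)$, the two definitions agree there and $\widetilde m_\mu$ is continuous in $U(\omega)$. Inside $B_r(y_0)$ it is the maximum of two viscosity subsolutions of $a(y,\omega)|Dv|\le\mu$, hence is itself a subsolution there; outside $B_r(y_0)$ it coincides with $m_\mu$ and is a subsolution; and across $\partial B_r(y_0)$ subsolution glueing is automatic for a max. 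Thus $\widetilde m_\mu$ is a viscosity subsolution of $a(y,\omega)|Dv|\le\mu$ in all of $U(\omega)$, and $\widetilde m_\mu(z,\omega)=0$ because the bump lives inside $\overline B_r(y_0)\not\ni z$. Finally $\widetilde m_\mu(y_0,\omega)>m_\mu(y_0,z,\omega)$, which would contradict maximality once we verify that $\widetilde m_\mu\in\cL_\mu(U(\omega))$.

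The main obstacle is precisely this last admissibility check, because $m_\mu$ is not globally Lipschitz: by Proposition \ref{prop::blow-up} it blows up on $\partial U(\omega)$. However, the perturbation is purely local inside $\overline B_r(y_0)\subset\subset U(\omega)$, so for any $\delta\in(0,\overline\delta)$ with $\overline B_r(y_0)\subset U^\delta(\omega)$, the restriction $\widetilde m_\mu|_{U^\delta(\omega)}$ differs from $m_\mu|_{U^\delta(\omega)}$ only on a compact set on which both functions are Lipschitz with constant at most $\mu/\delta+\eta$ (for $m_\mu$ by Lemma \ref{lemma::Lipschitz}, for $\phi+\kappa$ by the bound $|D\phi|\le(\mu-\eta)/a \le (\mu/\delta)$ on $\overline B_r(y_0)$, enlarging $r$-independent constants if necessary). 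The maximum of two functions with Lipschitz constant $\mu/\delta+\eta$ has the same Lipschitz constant, so $\widetilde m_\mu\in\cS_\mu(U(\omega))$. To get $\widetilde m_\mu\in\cL_\mu(U(\omega))$, one invokes the cut-off argument flagged in the remark preceding the proposition: truncate $\widetilde m_\mu$ at a large level $M$ and extend by the constant $M$ across $\R^n\setminus U^{\delta_M}(\omega)$, producing an admissible competitor $\widetilde m_{\mu,M}\in\cL_\mu(U(\omega))$ with $\widetilde m_{\mu,M}(z,\omega)=0$ and $\widetilde m_{\mu,M}(y_0,\omega)>m_\mu(y_0,z,\omega)$ for $M$ large. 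This contradicts the maximality of $m_\mu$ in \eqref{eq:def-max-subsol}, completing the proof.
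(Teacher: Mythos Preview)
Your proof is correct and follows essentially the same approach as the paper: a Perron bump argument to establish the supersolution inequality, combined with the cut-off at level $M$ to produce an admissible competitor in $\cL_\mu(U(\omega))$. The only cosmetic differences are that the paper uses an explicit quadratic bump $\psi^\eps(y)=\phi(y)+\eps-|y-y_0|^2$ in place of your abstract $\phi+\kappa$, and that the paper verifies the $\cS_\mu(U(\omega))$ membership by an explicit three-case analysis (both points in $B_r(y_0)$, both outside, one in each), whereas you appeal to the max-of-Lipschitz principle; your verification is a bit terse on the mixed case but the argument goes through once one notes that the threshold $\overline\delta(\eta)$ for $\widetilde m_\mu$ may be taken as the minimum of that for $m_\mu$ and $\inf_{\overline B_r(y_0)}a(\cdot,\omega)$.
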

%------------------------------------------------------------

\begin{proof}
In view of Lemma \ref{lemma::Lipschitz}, it is only necessary to show that $m_\mu$ is a supersolution of \eqref{eq:metricpb-wp} in $U(\omega)\setminus \{z\}$. We argue by contradiction. 

Assume $m_\mu(\cdot,z,\omega)$ is not a supersolution in $U(\omega)\setminus\{z\}$. Then there exists  $y_0\in U(\omega)\setminus\{z\}$ and  $\phi(\cdot,\omega)\in\mathcal C^1(U(\omega))$ such that $m_\mu(y_0,z,\omega) = \phi(y_0,\omega)$, $m_\mu(\cdot,z,\omega) >  \phi (\cdot,\omega)$ in $U(\omega)\setminus \{z\}$ and
\begin{equation}\label{eq:Perron}
  	a(y_0,\omega)|D\phi(y_0,\omega)| < \mu.
\end{equation}
For any  $\eps>0$, let $\psi^\eps(y,\omega):=  \phi(y,\omega) + \eps - |y-y_0|^2$ and consider, for all $y\in U(\omega)$,
	\[w^\eps(y,\omega): = \sup\big( m_\mu(y,z,\omega), \psi^\eps(y,\omega)\big).\]
Note that $w^\eps(y,\omega) =\psi^\eps(y,\omega)$ only if $y\in B_r(y_0)$, with $r=\sqrt\eps$.
For $M$ large enough, let 
\begin{equation*}
    	w_M^\eps(y,\omega):=
	\begin{cases}
		\min\{M,w^\eps(y,\omega)\} &\hbox{ for }\ y\in U(\omega),\\
		M &\hbox{ for }\ y \in \R^n \setminus U(\omega).
	\end{cases}
\end{equation*}
We show below that, for $\eps>0$ sufficiently small, $w_M^\eps(\cdot,\omega)\in {\cL_\mu}(U(\omega))$ and, hence, is a subsolution of \eqref{eq:metricpb-wp} in $U(\omega)$. Since  it satisfies $w^\eps_M(y_0,\omega)= \phi(y_0,\omega)+\eps > m_\mu(y_0,z,\omega)$, the claim contradicts the maximality of $m_\mu(\cdot,z,\omega)$ at $y_0$. 

It suffices to check that, for $\eps>0$ sufficiently small, $w^\eps(\cdot,\omega)\in \cS_\mu(U(\omega))$. Since $m_\mu\in \cS_\mu(U(\omega))$,  for any $\eta>0$, let $\ovl\delta(\eta)>0$ such that, for all $\delta\in(0,\ovl\delta)$ and all $y_1,y_2\in U^\delta(\omega)$,
\begin{eqnarray*}
	|m_\mu(y_1,z,\omega)- m_\mu(y_2,z,\omega)|\leq 
  	\left(\frac{\mu}{\delta}+{\eta}\right)|y_1-y_2|.
\end{eqnarray*}
If $y_1,y_2\in U^\delta(\omega)\setminus B_r(y_0)$, then the estimate  above holds for $w^\eps(\cdot,\omega)$, since $w^\eps(\cdot,\omega) = m_\mu(\cdot,z,\omega)$, while if $y_1,y_2\in B_r(y_0)$, the smoothness of $\psi^\eps(\cdot,\omega)$ gives
\begin{eqnarray*}
  	|\psi^\eps(y_1,\omega)-\psi^\eps(y_2,\omega)| \leq  
  	\|D\psi^\eps\|_{L^\infty(B_r(y_0))}|y_1-y_2|.
\end{eqnarray*}
Note also that \eqref{eq:Perron} and the continuity of $a(\cdot,\omega)$, taking if necessary a smaller $\eps$, yield that $\psi^\eps$ is a smooth subsolution of \eqref{eq:metric-pb} in $B_r(y_0)$, hence,
\begin{eqnarray*}
  	|\psi^\eps(y_1,\omega)-\psi^\eps(y_2,\omega)| \leq \frac{\mu}{\inf_{B_r(y_0)}a(\cdot,\omega)}|y_1-y_2|.
\end{eqnarray*}
For $\delta>0$ sufficiently small we have $ B_r(y_0)\cap U^\delta(\omega)\neq\emptyset$. The continuity of $a(\cdot,\omega)$ yields that, for any $\nu>0$, there exists some small $r>0$ such that $\inf_{B_r(y_0)}a(\cdot,\omega)\geq \delta-\nu$. Choosing $\nu=\delta^2\eta/(2\mu+2\delta\eta)$ so that $\nu\to0$ as $\eta\to 0$, we further get 
\begin{eqnarray*}
	 |\psi^\eps(y_1,\omega)-\psi^\eps(y_2,\omega)| \leq \frac{\mu}{\delta-\nu}|y_1-y_2|
	 <\left(\frac\mu\delta+\eta\right)|y_1-y_2|.
\end{eqnarray*}
Lastly if $y_1 \in B_r(y_0)\cap U^\delta(\omega)$  and $y_2\in U^\delta(\omega)\setminus B_r(y_0)$ are such that $w^\eps(y_1,\omega)=\psi^\eps(y_1,\omega)$ and $w^\eps(y_2,\omega)=m_\mu(y_2,z,\omega)$, consider $y_3 \in B_r(y_0)$ on the segment determined by $y_1$ and $y_2$, that is $y_3 = \alpha y_2+(1-\alpha)y_2$ for some $\alpha\in(0,1)$, such that $w^\eps(y_3,\omega)=\psi^\eps(y_3,\omega)=m_\mu(y_3,z,\omega)$. Then, in view of the previous cases, we get
\begin{eqnarray*}
  	|w^\eps(y_1,\omega)-w^\eps(y_2,\omega)| 
   	& \leq & |w^\eps(y_1,\omega)-w^\eps(y_3,\omega)|+ |w^\eps(y_3,\omega)-w^\eps(y_2,\omega)| \\
   	& \leq & \left(\frac \mu \delta+{\eta}\right) |y_1-y_3| + \left(\frac \mu \delta+{\eta}\right) |y_2-y_3|=
      	 \left(\frac{\mu}{\delta}+{\eta}\right)|y_1-y_2|.
\end{eqnarray*}
\end{proof}

We continue with several properties of $m_\mu$ with respect to $\mu$ that are used later in the paper. Since the proof is similar to the one in \cite{AS:13} we omit it here.
%------------------------------------------------------------
\begin{proposition}[Continuity and monotonicity in $\mu$] \label{prop::monotone-mu}
Assume $(A2)$ and $(S2)$. For $\mu_1<\mu_2$, let $m_{\mu_1}$ and $m_{\mu_2}$ be the maximal subsolutions of \eqref{eq:metric-pb}. Then, for any $z\in U(\omega)$,
	\[m_{\mu_1}(\cdot,z,\omega)\leq m_{\mu_2}(\cdot,z,\omega)\hbox{ in }U(\omega).\]
Moreover $\mu \mapsto m_\mu(y,z,\omega)$ is continuous on $(0,\infty)$, locally uniformly in $(y,z)\in U(\omega)\times U(\omega)$, in the sense that, if $\mu_j\to\mu$ as $j\to\infty$, then for every compact $K\subset U(\omega)$,
	\[m_{\mu_j}(\cdot,\cdot,\omega)\to m_\mu(\cdot,\cdot,\omega)\ \hbox{ uniformly in } K\times K.\]
\end{proposition}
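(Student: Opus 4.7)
The plan is to reduce both claims to the linear scaling of the eikonal equation $a|Dm|=\mu$ in the constant $\mu$. Monotonicity is immediate from the variational definition: if $\mu_1<\mu_2$, then any $w$ admissible in \eqref{eq:def-max-subsol} for $m_{\mu_1}$ is also admissible for $m_{\mu_2}$. Indeed, the normalization $w(z,\omega)=0$ is preserved, $a(y,\omega)|Dw|\le\mu_1\le\mu_2$, and the Lipschitz bound $\mu_1/\delta+\eta$ implies $\mu_2/\delta+\eta$, so $\cL_{\mu_1}(U(\omega))\subseteq\cL_{\mu_2}(U(\omega))$. Passing to the supremum yields $m_{\mu_1}\le m_{\mu_2}$ on $U(\omega)$.

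For continuity, I would establish the stronger scaling identity
\[
m_{c\mu}(\cdot,\cdot,\omega)=c\,m_\mu(\cdot,\cdot,\omega)\qquad\text{for every }c,\mu>0,
\]
by checking that the map $w\mapsto cw$ is a bijection between competitors in \eqref{eq:def-max-subsol} for $m_\mu$ and competitors for $m_{c\mu}$. The normalization $(cw)(z,\omega)=0$, the subsolution property $a(y,\omega)|D(cw)|=c\,a(y,\omega)|Dw|\le c\mu$, and global Lipschitz regularity are transparently preserved. The delicate point is membership $cw\in\cL_{c\mu}(U(\omega))$: given $\eta'>0$, set $\eta:=\eta'/c$, take the corresponding $\overline\delta(\eta)$ supplied by $w\in\cS_\mu(U(\omega))$, and declare $\overline\delta'(\eta'):=\overline\delta(\eta'/c)$. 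Then, for $\delta\in(0,\overline\delta'(\eta'))$ and $y_1,y_2\in U^\delta(\omega)$,
\[
|cw(y_1)-cw(y_2)|\le c\bigl(\tfrac{\mu}{\delta}+\eta\bigr)|y_1-y_2|=\bigl(\tfrac{c\mu}{\delta}+\eta'\bigr)|y_1-y_2|,
\]
with $\overline\delta'(\eta')\to 0$ as $\eta'\to 0$ since $\overline\delta(\eta)\to 0$. Taking the supremum over admissible $w$'s yields $cm_\mu\le m_{c\mu}$, and applying the same inequality with $(\mu,c)$ replaced by $(c\mu,1/c)$ gives the reverse.

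From the identity $m_\mu=\mu\,m_1$, continuity is an immediate consequence. For $\mu_j\to\mu>0$,
\[
|m_{\mu_j}(y,z,\omega)-m_\mu(y,z,\omega)|=\bigl|\tfrac{\mu_j}{\mu}-1\bigr|\,m_\mu(y,z,\omega),
\]
so uniform convergence on $K\times K$ for compact $K\subset U(\omega)$ reduces to bounding $m_\mu$ on such sets. Since $a(\cdot,\omega)$ is continuous and strictly positive on the open set $U(\omega)$, $\min_K a(\cdot,\omega)>0$, hence $K\subset U^\delta(\omega)$ for some small $\delta>0$; Lemma~\ref{lemma::Lipschitz} together with $m_\mu(z,z,\omega)=0$ then yields $m_\mu(y,z,\omega)\le(\mu/\delta+\eta)\,\mathrm{diam}(K)$ on $K\times K$, giving the desired locally uniform convergence.

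The main obstacle is the careful bookkeeping in the $\eta$-$\overline\delta$ definition of $\cS_\mu(U(\omega))$ under the rescaling $w\mapsto cw$; once this is settled, the remainder of the argument is essentially algebraic and the monotonicity step is immediate.
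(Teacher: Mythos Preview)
Your argument is correct. The monotonicity step is exactly the natural one, and your continuity argument via the scaling identity $m_{c\mu}=c\,m_\mu$ is clean and valid: the bijection $w\mapsto cw$ between competitors is verified carefully, including the $\eta$--$\overline\delta$ bookkeeping for membership in $\cS_{c\mu}(U(\omega))$, and the final bound on $m_\mu$ over $K\times K$ follows as you say from Lemma~\ref{lemma::Lipschitz} once $K\subset U^\delta(\omega)$ for a suitably small $\delta$.

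The paper omits its own proof and refers to \cite{AS:13}, which treats general level-set convex Hamiltonians; in that generality the identity $m_{c\mu}=c\,m_\mu$ is not available, and the expected route is instead through uniform local Lipschitz bounds in $\mu$, extraction of a locally uniform limit, and stability of viscosity sub/supersolutions to identify the limit as $m_\mu$. Your approach is genuinely different and shorter because it exploits the $1$-positive homogeneity of the specific Hamiltonian $a(y,\omega)|p|$: once $m_\mu=\mu\,m_1$, continuity in $\mu$ is reduced to a single pointwise bound, with no need for compactness or stability arguments. The trade-off is that the \cite{AS:13} argument would survive perturbations of the Hamiltonian that destroy exact homogeneity, while yours would not; for the problem at hand, however, your route is the more economical one.
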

%------------------------------------------------------------

%=================================================================================
\medskip\section{The Homogenization of the Metric Problem}\label{sec::homog-metric-pb}
%=================================================================================

We use the subadditive ergodic theorem to establish an averaging result for the solution of the metric problem in each unbounded connected component of $\{x\in\R^n : a(x,\omega)\neq0\}$. The argument is not standard, since $m_\mu$ is not defined everywhere in $\R^n$. To overcome this difficulty, we first extend $m_\mu$ along rays starting from the origin,  apply the subadditive ergodic theorem to the extension and then relate back the limit  to the original function. This difficulty is amplified by the blow up of the solution on the boundary of the connected components. To overcome the latter, we construct an averaged metric $\delta$ away from the boundary, then prove that the limit is independent of $\delta$.

To fix the ideas, we assume $U(\omega)$  is an open  unbounded connected component of $\{x\in\R^n : a(x,\omega)>0\}$ with $\partial U(\omega) \subseteq \{x\in\R^n : a(x,\omega) = 0\}$. We  show the homogenized limit exists, it is  subadditive, 1-positively homogeneous and globally Lipschitz. When $U(\omega)$ is an unbounded connected component of $\{x\in\R^n : a(x,\omega)<0\}$, we construct similarly a superadditive averaged  metric.

%------------------------------------------------------------
\begin{theorem}[Averaging of $m_\mu$]\label{thm::avg-metric}
Assume $(A1),(A2),(A3)$, $(S1),(S2)$ and $(S3)$. There exist $\ovl\delta_0>0$ sufficiently small, an event of full probability $\widetilde\Omega\subseteq\Omega$ and, for each $\mu>0$, $\ovl m_\mu:\R^n\to\R$, such that, for every $\omega\in\widetilde\Omega$,  $y,z\in\R^n$ and every $\delta\in(0,\delta_0)$, 
\begin{equation}\label{eq:avg-general}
    	\lim_{t\to\infty} \sup_{|x|\leq R} 
    	\limsup_{\substack{y-z \to x \\ ty,tz\in  U^\delta(\omega)}}
    	\left|\frac1t m_\mu (ty,tz,\omega) - \ovl m_\mu(x)\right|=0.
\end{equation}
\end{theorem}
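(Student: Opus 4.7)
\smallskip

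\noindent \emph{Proof sketch.} The strategy is to apply the subadditive ergodic theorem (Theorem \ref{thm:SED}) to a suitable one-parameter extension of $m_\mu$ along rays through the origin, circumventing two obstacles: $m_\mu$ is defined only on $U(\omega)$, and it blows up on $\partial U(\omega)$.

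First I extend $m_\mu$ to all of $\R^n$ in a Lipschitz manner. Fix $\eta>0$ small, let $\ovl{\delta}_0=\ovl{\delta}_0(\eta)$ be as in Lemma \ref{lemma::improve-Lip}, and restrict to the event of full probability on which $0\in U^\delta(\omega)$ for small $\delta$ (guaranteed by $(S1)$ and ergodicity). For $\delta\in(0,\ovl{\delta}_0)$ define
\[
\widetilde{m}^\delta_\mu(x,\omega):=\sup_{z\in U^\delta(\omega)}\left\{m_\mu(z,0,\omega)-\left(\frac{\mu}{\delta}+\eta\right)|x-z|\right\},\qquad x\in\R^n.
\]
By Lemma \ref{lemma::improve-Lip}, $\widetilde{m}^\delta_\mu(\cdot,\omega)$ coincides with $m_\mu(\cdot,0,\omega)$ on $U^\delta(\omega)$ and is globally Lipschitz with constant $\mu/\delta+\eta$. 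For each unit vector $y$, set
\[
Q_y([a,b))(\omega) := \widetilde{m}^\delta_\mu\bigl((b-a)y,\tau_{ay}\omega\bigr),\qquad 0\le a<b.
\]
This process is stationary under the semigroup $\sigma_t:=\tau_{ty}$ (by $(A1)$ together with the translation invariance built into the construction), subadditive on disjoint intervals (inherited from the pseudo-metric property of Lemma \ref{lem::mp-sym} combined with the Lipschitz extension), and uniformly integrable via the bound $|Q_y([a,b))|\le (\mu/\delta+\eta)(b-a)$. Theorem \ref{thm:SED} then produces, for every $y\in\Q^n\cap\bS^{n-1}$, a deterministic limit $\ovl{m}^\delta_\mu(y)$ such that $\widetilde{m}^\delta_\mu(ty,\omega)/t\to\ovl{m}^\delta_\mu(y)$ a.s.

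The next step is to remove the extension and to identify the limit as $\delta$-independent. Along the ray $\R^+y$, assumption $(S3)$ and Borel--Cantelli imply that the bad times $\{t\ge 0:ty\notin U^\delta(\omega)\}$ lie in intervals $(s_j(\omega),t_j(\omega))$ with $s_j/t_j\to 1$. Coupling this density-zero property with the Lipschitz control of Step 1 and the logarithmic blow-up of Proposition \ref{prop::blow-up}, one shows that $m_\mu(ty,0,\omega)/t$ and $\widetilde{m}^\delta_\mu(ty,\omega)/t$ share the same asymptotic value along the good times $\{ty\in U^\delta(\omega)\}$, and that this value does not depend on $\delta$; call it $\ovl{m}_\mu(y)$. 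Since each $\widetilde{m}^\delta_\mu$ is uniformly Lipschitz on bounded sets for fixed $\delta$, the limit $\ovl{m}_\mu$ is Lipschitz on $\R^n$ and $1$-positively homogeneous by scaling of the process, so the convergence along $\Q^n$ extends to all of $\R^n$ with uniform control on $|x|\le R$. To obtain the two-point statement \eqref{eq:avg-general}, use stationarity to write $m_\mu(ty,tz,\omega)=m_\mu(t(y-z),0,\tau_{tz}\omega)$, extract a single null set using countability of $\Q^n$, and apply the previous step to $y-z$.

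The main technical obstacle is the $\delta$-independence of the limit. The Lipschitz constant $\mu/\delta+\eta$ of the extension diverges as $\delta\to 0$, so bounding the error of the extension on a bad interval $(s_j,t_j)$ by $(\mu/\delta+\eta)(t_j-s_j)$ and dividing by $t\ge t_j$ yields a negligible contribution only if $(t_j-s_j)/t_j\to 0$ dominates the growth of $1/\delta$. Reconciling the quantitative decay provided by $(S3)$ with the blow-up of the Lipschitz constant of the extension, so as to let $\delta\to 0$ and produce a single deterministic $\ovl{m}_\mu$ independent of $\delta$, is the heart of the argument; all the preceding Lipschitz estimates near $\partial U(\omega)$ are precisely the quantitative ingredients tailored to make this reconciliation possible.
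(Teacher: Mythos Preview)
Your overall architecture matches the paper's: extend $m_\mu$ along rays, apply the subadditive ergodic theorem direction by direction, upgrade to a deterministic Lipschitz limit via ergodicity and the estimates of Section~\ref{sec::metric-pb}, and finally show $\delta$-independence using $(S3)$. The genuine gap is in the specific extension you choose and the process you build from it.

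First, the event $\{0\in U^\delta(\omega)\}$ does not have full probability; $(S1)$ only gives $\bP[0\in U(\omega)]=\theta>0$. More importantly, your process
\[
Q_y([a,b))(\omega)=\widetilde m_\mu^\delta\bigl((b-a)y,\tau_{ay}\omega\bigr)
\]
requires, for each $a$, that $\widetilde m_\mu^\delta(\cdot,\tau_{ay}\omega)$ be well-defined, and your formula for $\widetilde m_\mu^\delta(\cdot,\omega')$ is a sup over $m_\mu(z,0,\omega')$, which only makes sense when $0\in U(\omega')$, i.e.\ when $ay\in U(\omega)$. For those $a$ with $ay\notin U(\omega)$ the process is simply undefined, so Theorem~\ref{thm:SED} cannot be invoked. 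Even when $ay\in U(\omega)\setminus U^\delta(\omega)$, the base point sits near the boundary and Proposition~\ref{prop::blow-up} forces $m_\mu(z,ay,\omega)$ to be arbitrarily large for $z\in U^\delta(\omega)$; your extension $\widetilde m_\mu^\delta(\cdot,\tau_{ay}\omega)$ then inherits this blow-up, destroying the bound $|Q_y([a,b))|\le(\mu/\delta+\eta)(b-a)$ that you need for uniform integrability. The claim that subadditivity is ``inherited'' also breaks here, since the Lipschitz control of Lemma~\ref{lemma::Lipschitz} only holds between pairs of points in $U^\delta(\omega)$.

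The paper resolves exactly this by a different extension: rather than a global McShane-type Lipschitz extension anchored at $0$, it uses a \emph{bilinear interpolation along each ray}. For $ty\notin\ovl U^\delta(\omega)$ one takes the last exit time $t_*$ and the next entrance time $t^*$ (both guaranteed by $(S3)$) and linearly interpolates $m_\mu(t_*y,\cdot,\omega)$ and $m_\mu(t^*y,\cdot,\omega)$; the second argument is treated the same way. Because every evaluation of $m_\mu$ is at points of $\ovl U^\delta(\omega)$, the process is globally defined, inherits the $\mu/\delta+\eta$ Lipschitz constant verbatim, and subadditivity follows line by line from Lemma~\ref{lem::mp-sym}. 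Once you have a legitimate subadditive process the rest of your outline (determinism via translation invariance, $\delta$-independence by picking a common sequence in $U^{\delta_1}\cap U^{\delta_2}$, Egoroff plus the ergodic theorem for the locally uniform statement) goes through essentially as you sketch.
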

%------------------------------------------------------------

Since the proof of Theorem \ref{thm::avg-metric} is long, we divide it in several lemmata, which we state and prove first. We begin by establishing the averaging result at the origin $z=0$ for each fixed direction $y$.
%------------------------------------------------------------
\begin{lemma}\label{lemma::avg-metric-orig}
There exists $\ovl \delta_0>0$ such that, for each $\mu>0$, $\delta\in(0,\ovl\delta_0)$ and $y\in\R^n$, there exists a set of full probability $\Omega^{\mu,\delta}_y\subseteq\Omega$ and a Lipschitz extension  $\widetilde m_\mu^\delta(\cdot,\cdot \omega):\R^+ y\times\R^+ y\to\R$ of  $m_\mu(\cdot,\cdot,\omega)$  such that, for every $\omega\in\Omega^{\mu,\delta}_y$,
\begin{equation*}
 \ovl m^\delta_\mu(y,\omega):= \lim_{t\to\infty} \frac1t \widetilde m^\delta_\mu(ty,0,\omega)	.
\end{equation*}
\end{lemma}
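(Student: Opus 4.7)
The goal is to set up a subadditive process on $\R^+$ along the ray $\R^+ y$ and apply the subadditive ergodic theorem (Theorem~\ref{thm:SED}). The obstacle is that $m_\mu(\cdot, 0, \omega)$ is defined only on $U(\omega)$, which intersects the ray in a union of (possibly many) random intervals, and $m_\mu$ blows up logarithmically on $\partial U(\omega)$ by Proposition~\ref{prop::blow-up}. One therefore has to restrict to $U^\delta(\omega)$, where $m_\mu$ is uniformly Lipschitz by Lemma~\ref{lemma::Lipschitz}, and construct a Lipschitz, subadditive extension to the full ray before averaging.

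\textbf{Construction of the extension.} Fix a unit vector $y$, $\mu > 0$, and a small $\eta > 0$. Let $\ovl\delta_0 = \ovl\delta_0(\eta)$ be supplied jointly by the definition of $\cS_\mu(U(\omega))$ and by Lemma~\ref{lemma::improve-Lip}. For $\delta \in (0, \ovl\delta_0)$, set $K := \mu/\delta + \eta$, so that $m_\mu$ is $K$-Lipschitz on $U^\delta(\omega) \times U^\delta(\omega)$, uniformly in $\omega$. For $s \leq t$ in $\R^+$, define
\[
\widetilde m_\mu^\delta(ty, sy, \omega) := \inf\!\bigg\{ \sum_{k=1}^{N} c_k(\omega) \,:\, s = t_0 < t_1 < \cdots < t_N = t \bigg\},
\]
where $c_k(\omega) := m_\mu(t_k y, t_{k-1} y, \omega)$ if both $t_{k-1} y, t_k y \in U^\delta(\omega)$, and $c_k(\omega) := K(t_k - t_{k-1})$ otherwise. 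The one-interval partition yields $\widetilde m_\mu^\delta(ty, sy, \omega) \leq K(t - s)$, whence $\widetilde m_\mu^\delta$ is globally $K$-Lipschitz by the triangle inequality. A short check using the subadditivity of $m_\mu$ on $U^\delta(\omega)$ (Lemma~\ref{lem::mp-sym}) together with the Lipschitz bound shows that, whenever $sy, ty \in U^\delta(\omega)$, the single-interval partition is optimal and $\widetilde m_\mu^\delta(ty, sy, \omega) = m_\mu(ty, sy, \omega)$, so $\widetilde m_\mu^\delta$ is a genuine extension. Subadditivity along the ray, $\widetilde m_\mu^\delta(ty, sy, \omega) \leq \widetilde m_\mu^\delta(ty, ry, \omega) + \widetilde m_\mu^\delta(ry, sy, \omega)$, is automatic from the partition definition.

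\textbf{Subadditive ergodic theorem.} Let $\sigma_r := \tau_{ry}$; this is a measure-preserving semigroup on $(\Omega, \cF, \bP)$. By $(A1)$ and the stationarity of $U^\delta$ from $(S1)$, one has $U^\delta(\tau_{ry}\omega) = U^\delta(\omega) - ry$ and $m_\mu(\cdot + ry, \cdot + ry, \omega) = m_\mu(\cdot, \cdot, \tau_{ry}\omega)$, so shifting partitions by $r$ gives $\widetilde m_\mu^\delta(ty, sy, \sigma_r\omega) = \widetilde m_\mu^\delta((t+r)y, (s+r)y, \omega)$. Setting $Q([s,t))(\omega) := \widetilde m_\mu^\delta(ty, sy, \omega)$, the three hypotheses of Theorem~\ref{thm:SED} hold: (i) stationary invariance from the shift identity just verified, (ii) uniform integrability from $|Q([s,t))| \leq K(t-s)$, and (iii) subadditivity on disjoint unions from the partition construction. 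Theorem~\ref{thm:SED} then produces a random variable $q_y^{\mu,\delta}$ and a full-probability event $\Omega_y^{\mu,\delta} \subseteq \Omega$ on which $t^{-1} \widetilde m_\mu^\delta(ty, 0, \omega) \to q_y^{\mu,\delta}(\omega)$. Setting $\ovl m_\mu^\delta(y, \omega) := q_y^{\mu,\delta}(\omega)$ concludes the proof.

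\textbf{Main obstacle.} The delicate point is engineering $\widetilde m_\mu^\delta$ to simultaneously be (a) a Lipschitz extension of $m_\mu\big|_{U^\delta(\omega)}$, (b) subadditive along the ray, and (c) measurable in $\omega$ and equivariant under $(\sigma_r)$. A plain McShane--Whitney Lipschitz extension would deliver (a) and (c) but destroy (b); the partition-based formula achieves all three in one stroke, using the boundary Lipschitz estimate of Lemma~\ref{lemma::improve-Lip} precisely to keep the cost finite and controlled across excursions into $U(\omega) \setminus U^\delta(\omega)$, where $m_\mu$ itself would blow up.
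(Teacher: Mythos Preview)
Your argument is correct and takes a genuinely different route from the paper's. The paper builds $\widetilde m_\mu^\delta$ by \emph{bilinear interpolation}: for each $t$ it locates the nearest exit and entrance times $t_*,t^*$ of the ray into $\ovl{U^\delta(\omega)}$ (this uses (S3) to guarantee $t^*<\infty$) and sets $\widetilde m_\mu^\delta(ty,sy,\omega)$ equal to the bilinear interpolant of $m_\mu$ at the four corner points $(t_*,s_*),(t_*,s^*),(t^*,s_*),(t^*,s^*)$; stationarity and the Lipschitz bound transfer directly from $m_\mu$, while subadditivity reduces, by linearity of the interpolation in the intermediate variable, to the case where the splitting point already lies in $\ovl{U^\delta(\omega)}$. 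Your partition/first-passage construction is more intrinsic---subadditivity is built into the definition and (S3) is not needed for the extension to be well-defined---at the cost of the short extra argument (which you correctly sketch) verifying that $\widetilde m_\mu^\delta$ agrees with $m_\mu$ on $(U^\delta(\omega)\cap\R^+y)^2$.

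Two small points. First, the full two-sided Lipschitz bound does not follow ``by the triangle inequality'': subadditivity together with $0\le\widetilde m_\mu^\delta(ty,sy,\omega)\le K(t-s)$ only gives $\widetilde m_\mu^\delta(t'y,sy,\omega)-\widetilde m_\mu^\delta(ty,sy,\omega)\le K(t'-t)$ for $t'\ge t$, not the reverse inequality. This is harmless, since Theorem~\ref{thm:SED} only requires uniform integrability, and $|Q([s,t))|\le K(t-s)$ suffices. Second, Lemma~\ref{lemma::improve-Lip} plays no role in your construction; the only regularity input you actually use is the $\cS_\mu$-Lipschitz bound on $U^\delta(\omega)$ from Lemma~\ref{lemma::Lipschitz}.
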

%------------------------------------------------------------
\begin{proof} 
We first define an extension of $m_\mu$ in each direction $y\in\R^n$ and then we introduce a new random process which satisfies the assumptions of the subadditive ergodic theorem. 

Fix $y\in\R^n$ with $|y|=1$. For each $t\geq 0$ let $t_*y$ and $t^*y$ be the exit point from $\ovl U^\delta(\omega)$ and the entrance point in $\ovl U^\delta(\omega)$ before and after $ty$ respectively, where
\begin{eqnarray*}
 	t_* :=  \sup\left\{0\leq s \leq t : sy\in \ovl U^\delta(\omega)\right\} \hbox{ and }
  	t^* :=  \inf\left\{s \geq t : sy\in \ovl U^\delta(\omega)\right\}. 
\end{eqnarray*}
It is immediate that $t^*$ and $t_*$ are measurable with respect to $\cF$, $0\leq t_*\leq t\leq t^*$ and there exists $\alpha\in(0,1)$ such that $t = (1-\alpha)\; t_* + \alpha t^*.$ 
Note that if $ty\in \ovl U^\delta(\omega)$, then $ty = t_*y=t^*y$.

We define the extension $\widetilde m^\delta_\mu(\cdot,\cdot,\omega):\R^+ y\times\R^+ y\to\R$ of $m_\mu(\cdot,\cdot,\omega)$ as the bilinear interpolation 
\begin{eqnarray}\nonumber
	\widetilde m^\delta_\mu(ty,sy,\omega) : = &
    	(1-\alpha)\; \big((1-\beta)\ m_\mu(t_*y,s_*y,\omega) + \beta\ m_\mu(t_*y,s^*y,\omega)\big) + \\
    	& \;\;\;\;\; \alpha \
      	\big((1-\beta)\ m_\mu(t^*y,s_*y,\omega) + \beta\ m_\mu(t^*y,s^*y,\omega)\big),
   	 \label{eq:metric-ext-bilinear}  
\end{eqnarray}
where $s:=(1-\beta)s_*+\beta s^*$ with $\beta \in (0,1)$. In particular, if $0\in \ovl U^\delta(\omega)$, $\widetilde m^\delta_\mu(\cdot,0,\omega):\R^+y\to\R$ is given by
\begin{equation}\label{eq:metric-ext-linear}
    	\widetilde m^\delta_\mu(ty,0,\omega) = 
    	(1-\alpha)\ m_\mu(t_*y,0,\omega) + \alpha\ m_\mu(t^*y,0,\omega).
\end{equation}
In view of  $(S3)$, $\widetilde m^\delta_\mu(\cdot,\cdot,\omega)$ is well defined. Moreover, if   $\ovl U_y^\delta(\omega)= \ovl U^\delta(\omega)\cap \R^+ y$, then
	\[  \widetilde m_\mu^\delta(\cdot,\cdot,\omega)  = m_\mu(\cdot,\cdot,\omega) 
   	 \; \hbox{ on } \;\;\; \;  \ovl U_y^\delta(\omega) \times  \ovl U_y^\delta(\omega).\]

Given $\widetilde m^\delta_\mu(\cdot,\cdot,\omega)$, we define the random process $Q:\mathcal I\to L^1(\Omega,\bP)$ by 
\[Q([s,t))(\omega):= \widetilde m^\delta_\mu(ty,sy,\omega),\]
which is a continuous subadditive process on $(\Omega,\cF,\bP)$ endowed with $\big(\tau_{ty}\big)_{t\geq 0}$. 

Without any loss of generality, it is enough to check the claim for the linear interpolation \eqref{eq:metric-ext-linear}, since the bilinear interpolation  \eqref{eq:metric-ext-bilinear} is linear in each of its arguments. We assume hence that $0\in \ovl U^\delta(\omega)$,  otherwise we  consider $0^*y$ the closest point to the origin in direction $y$ and work directly with the bilinear interpolation \eqref{eq:metric-ext-bilinear}.

The  $\widetilde m^\delta_\mu$'s  preserve the translation invariance of $m_\mu$, since
\begin{eqnarray*}
   	 \widetilde m^\delta_\mu(ty,0,\tau_{sy}\omega) 
    	& = & (1-\alpha)\ m_\mu(t_*y,0,\tau_{sy}\omega) + \alpha\ m_\mu(t^*y,0,\tau_{sy}\omega)\\
    	& = & (1-\alpha)\ m_\mu(sy + t_*y,sy,\omega) + \alpha\ m_\mu(sy+t^*y,sy,\omega)\\
    	& = & (1-\alpha)\ m_\mu((s+t)_*y,sy,\omega) + \alpha\ m_\mu((s+t)^*y,sy,\omega)\\
    	& = & \widetilde m^\delta_\mu(((s+t)y,sy,\omega);
\end{eqnarray*}
note that we used the translation invariance of the level sets of $a(\cdot,\omega)$ to  say that $sy\in \ovl U^\delta(\omega)$ if and only if $0\in \ovl U^\delta(\tau_{sy}\omega)$ and 
\begin{eqnarray*}
    	(s+t)_* (\omega)
    	& = & \sup\left\{s\leq r \leq s+t : ry\in \ovl U^\delta(\omega)\right\}\\
    	& = & \sup\left\{0\leq r \leq t : ry+sy\in \ovl U^\delta(\omega)\right\}+s\\
    	& = & \sup\left\{0\leq r \leq t : ry\in \ovl U^\delta(\tau_{sy}\omega)\right\}+s = t_*(\tau_{sy}\omega)+s.
\end{eqnarray*}
The $\widetilde m^\delta_\mu$'s also preserve the Lipschitz constants of $m_\mu$'s on $\ovl U_y^\delta(\omega)$. Indeed, for all $t>0$, 
\begin{eqnarray*}
    	\widetilde m^\delta_\mu(ty,0,\omega)
    	& = & (1-\alpha)\;m_\mu(t_*y,0,\omega) + \alpha\; m_\mu(t^*y,0,\omega)\\
    	& \leq &  (1-\alpha)\; \left( \frac\mu\delta + \eta \right)\; t_* |y| +  
               \alpha \; \left( \frac\mu\delta + \eta \right)\; t^* |y|
    	=\; \left( \frac\mu\delta + \eta \right)\; t|y|.
\end{eqnarray*}
Finally the $\widetilde m^\delta_\mu$'s remain subadditive. Indeed, let $t>s>0$ and assume again, without loss of generality, that $sy\in \ovl U^\delta(\omega)$. Then
\begin{eqnarray*}
    	\widetilde m^\delta_\mu(ty,0,\omega) 
    	&  =  &  (1-\alpha)\ m_\mu(t_*y,0,\omega) + \alpha\  m_\mu(t^*y,0,\omega) \\
    	&\leq &  (1-\alpha) \big(m_\mu(t_*y,sy,\omega) + m_\mu(sy,0,\omega) \big) +
		\alpha  \big( m_\mu(t^*y,sy,\omega) + m_\mu(sy,0,\omega)\big) \\
    	&  =  &  \widetilde m^\delta_\mu(ty,sy,\omega) + \widetilde m^\delta_\mu(sy,0,\omega). 
\end{eqnarray*}
All the above properties remain true for the bilinear extension $\widetilde m^\delta_\mu=\widetilde m^\delta_\mu(ty,sy,\omega)$. Then, by the subadditive ergodic theorem, there exists an event $\Omega^{\mu,\delta}_y$ of full probability such that, for all $\omega\in {\Omega^{\mu,\delta}_y}$, there exists 
\begin{equation*}
    	\ovl m^\delta_\mu(y,\omega): =
    	\lim_{t\to\infty} \frac 1t \widetilde m^\delta_\mu(ty,0,\omega)\;=
    	\lim_{\substack{t\to\infty\\ ty,0\in \ovl U^\delta(\omega)}} \frac 1t m_\mu(ty,0,\omega). 
\end{equation*}
\end{proof}

\begin{remark}\label{rk:0inU}\rm
Note that we may assume, without any loss of generality, that  $0\in U(\omega)$ a.s., which yields that $0 \in \ovl U^\delta(\omega)$ for $\delta>0$ sufficiently small. Otherwise, for any unit vector $y\in\R^n$ and $\delta>0$, we shift the origin in direction $y\in\R^n$ to the closest point which lies inside the domain $\ovl U^\delta(\omega)$, namely we replace $0$ by $0^*y$, where
 \[0^* =  \inf\{t\geq 0: ty\in \ovl U^\delta(\omega)\}.\] 
In this case, instead of arguing for the linear interpolation \eqref{eq:metric-ext-linear}, we work directly with the bilinear interpolation \eqref{eq:metric-ext-bilinear}.
\end{remark}

We show next that $\ovl m_\mu^\delta$ is deterministic and establish the a.s. convergence.
%------------------------------------------------------------
\begin{lemma}\label{lemma::avg-metric-orig-det}
Let $\eta>0$ and $\ovl\delta_0=\ovl\delta_0(\eta)>0$ be given by Lemma \ref{lemma::Lipschitz}. For each $\mu>0$ and $\delta\in(0,\ovl\delta_0)$, there exists a set of full probability $\Omega^{\mu,\delta}\in\cF$ and  $\ovl m^\delta_\mu:\R^n\to\R$ such that,  for every $\omega\in \Omega^{\mu,\delta}$ and $y\in\R^n$, 
\begin{equation}\label{eq:avg-origin}
  	\ovl m^\delta_\mu(y) := \lim_{\substack{t\to\infty\\ ty\in \ovl U^\delta(\omega)}}
    	\frac1t m_\mu (ty,0^*y,\omega).
\end{equation}
For fixed $\mu>0$ and $\delta\in(0,\ovl\delta_0)$, $\ovl m^\delta_\mu$ is subadditive, $1$-positively homogeneous and Lipschitz continuous with Lipschitz constant at most $\mu/\delta+\eta$.
\end{lemma}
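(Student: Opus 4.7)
The plan is to promote the directional limit from Lemma \ref{lemma::avg-metric-orig} to a deterministic function on all of $\R^n$, via three ingredients: (i) ergodicity, to strip the $\omega$-dependence for each fixed direction; (ii) intersection over a countable dense set of directions, followed by a Lipschitz extension to the remaining $y$; and (iii) reading off subadditivity and $1$-homogeneity from the corresponding properties of $m_\mu$ obtained in Section \ref{sec::metric-pb}.

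\medskip
\noindent\textbf{Deterministic limit and common event.} Fix $y \in \Q^n\setminus\{0\}$. The stationarity identity
$\widetilde m^\delta_\mu(ty,0,\tau_{sy}\omega) = \widetilde m^\delta_\mu((s+t)y,sy,\omega)$
derived in the preceding proof, subadditivity, and the fact that $\frac{1}{t}\widetilde m^\delta_\mu(sy,0,\omega) \to 0$, together show that the random variable $\ovl m^\delta_\mu(y,\cdot)$ is $\tau_{sy}$-invariant for every $s\geq 0$. Ergodicity of $(\tau_z)_{z\in\R^n}$ then forces it to be a.s.\ constant, which we denote $\ovl m^\delta_\mu(y)$. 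Setting
$\Omega^{\mu,\delta} := \bigcap_{y\in\Q^n\setminus\{0\}}\Omega^{\mu,\delta}_y$
yields a full-probability event on which \eqref{eq:avg-origin} holds for every rational $y$.

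\medskip
\noindent\textbf{Lipschitz extension and structural properties.} For rational $y_1,y_2$ and $t>0$ such that $ty_1,ty_2\in\ovl U^\delta(\omega)$ (a set of density one in $t$ by $(S3)$), Lemma \ref{lemma::improve-Lip} gives
\[
\bigl|m_\mu(ty_1,0^*y,\omega) - m_\mu(ty_2,0^*y,\omega)\bigr|
\leq \Big(\frac{\mu}{\delta}+\eta\Big)\,t\,|y_1-y_2|.
\]
Dividing by $t$ and passing to the limit yields $|\ovl m^\delta_\mu(y_1)-\ovl m^\delta_\mu(y_2)| \leq (\mu/\delta+\eta)|y_1-y_2|$ on $\Q^n$, which extends uniquely by uniform continuity to a Lipschitz function on $\R^n$ with the same constant; a standard approximation argument identifies this extension with the limit \eqref{eq:avg-origin} for irrational $y$. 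Positive $1$-homogeneity is immediate from the change of variable $s=\lambda t$. For subadditivity, the pseudo-metric inequality of Lemma \ref{lem::mp-sym} gives
\[
m_\mu(t(y_1+y_2),0^*,\omega)
\leq m_\mu(t(y_1+y_2),ty_1,\omega) + m_\mu(ty_1,0^*,\omega),
\]
and stationarity rewrites the first term as $m_\mu(ty_2,0,\tau_{ty_1}\omega)$; dividing by $t$ and sending $t\to\infty$ produces $\ovl m^\delta_\mu(y_1+y_2)\leq \ovl m^\delta_\mu(y_1)+\ovl m^\delta_\mu(y_2)$.

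\medskip
\noindent\textbf{Main obstacle.} The delicate point is the subadditivity step: one must show that $\frac{1}{t}m_\mu(ty_2,0,\tau_{ty_1}\omega)\to \ovl m^\delta_\mu(y_2)$ for the specific $\omega\in\Omega^{\mu,\delta}$ at hand, even though $\tau_{ty_1}\omega$ need not a priori belong to $\Omega^{\mu,\delta}_{y_2}$. This is resolved by observing that the shift-invariance of the a.s.\ constant limit (established in the first step) forces the good event to be translation invariant modulo null sets, so that $\Omega^{\mu,\delta}$ can be chosen $\tau_z$-invariant for every $z\in\R^n$; alternatively, a diagonal extraction along the density-one sequence provided by $(S3)$ allows the direct and the shifted ergodic averages to converge simultaneously. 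Coordinating this passage to the limit with the $\ovl U^\delta(\omega)$-restriction coming from the boundary blow-up of $m_\mu$ is the main technical difficulty.
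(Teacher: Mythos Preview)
The critical gap is in your first step. You establish that $\ovl m^\delta_\mu(y,\cdot)$ is $\tau_{sy}$-invariant for $s\geq 0$ and then invoke ergodicity of the full group $(\tau_z)_{z\in\R^n}$ to conclude it is a.s.\ constant. But ergodicity of the full group only yields constancy for random variables invariant under \emph{every} $\tau_z$, not merely under the one-parameter semigroup $(\tau_{sy})_{s\geq 0}$; that semigroup need not be ergodic on its own. In other words, you have $\ovl m^\delta_\mu(y,\tau_{sy}\omega)=\ovl m^\delta_\mu(y,\omega)$, whereas what is required is $\ovl m^\delta_\mu(y,\tau_{z}\omega)=\ovl m^\delta_\mu(y,\omega)$ for all $z\in\R^n$, and nothing in your argument supplies this for $z$ transverse to $y$. (Incidentally, the Lipschitz bound you quote should come from Lemma~\ref{lemma::Lipschitz}, not Lemma~\ref{lemma::improve-Lip}, since both points lie in $U^\delta(\omega)$.)

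The paper closes exactly this gap, and by a different mechanism than you propose. Instead of relying on the subadditive structure along the ray $\R^+y$, it shows directly that $\ovl m^\delta_\mu(y,\omega)=\ovl m^\delta_\mu(y,\tau_z\omega)$ for arbitrary $z$, first for $z\in\ovl U^\delta(\omega)$ and then in general. The key tool is the subadditivity of $m_\mu$ \emph{inside the domain} $U(\omega)$ (Lemma~\ref{lem::mp-sym}), not along a fixed ray: using $(S3)$ one picks sequences $l_j(\omega) y\in\ovl U^\delta(\omega)$ and $r_j(\tau_z\omega) y + z\in\ovl U^\delta(\omega)$ with $l_j/r_j\to 1$, and then bounds $m_\mu(r_jy+z,z,\omega)$ by $m_\mu(l_jy,0,\omega)$ plus Lipschitz errors of order $|l_j-r_j||y|+|z|$. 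This same ``two sequences with ratio tending to one'' device is also how the paper establishes Lipschitz continuity and subadditivity of $\ovl m^\delta_\mu$, which avoids having to find a single $t$ with $ty_1,ty_2\in\ovl U^\delta(\omega)$ simultaneously and, more importantly, sidesteps the translated-event issue you flag as your ``Main obstacle''. That obstacle is in fact a symptom of the same gap: once full translation invariance is proved first, the subadditivity argument runs entirely in the fixed environment $\omega$ and no shifted good sets are required.
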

%------------------------------------------------------------

\begin{proof} 
In light of Remark \ref{rk:0inU}, we may assume that $0\in U(\omega)$ a.s..  It remains to show that $\ovl m_\mu^\delta(y,\cdot)$ is deterministic. 
In view of the ergodicity this would follow once we show that $\ovl m^\delta_\mu$ is translation invariant, that is, it satisfies for all $z\in\R^n$,
\begin{equation}\label{eq:stationarity-avg-m}
  	\ovl m^\delta_\mu(y,\omega) = \ovl m^\delta_\mu(y,\tau_z\omega).
\end{equation}
We establish \eqref{eq:stationarity-avg-m} first for $z\in \ovl U^\delta(\omega)$ and then deduce the general case.

Let $z\in \ovl U^\delta(\omega)$. In view of $(S3)$, for each $\delta\in(0,\ovl\delta_0)$ and $y\in\R^n\setminus \{0\}$, there exist $\big(l_j(\omega)y\big)_{j\geq 0}\subset \ovl U^\delta_y(\omega)$ and $\big(r_j(\tau_z\omega)y+z\big)_{j\geq 0}\subset \ovl U^\delta_y(\omega)$ such that, as $j\to\infty$,  $l_j(\omega)\to\infty$, $r_j(\tau_z \omega)\to\infty$ and {$l_j(\omega)/r_j(\tau_z \omega)\to 1$}. Then, by the subadditivity and Lipschitz continuity of $m_\mu$,
 \begin{eqnarray*}
  	\ovl m^\delta_\mu(y,\tau_z\omega) 
   	& =   & 
      	\lim_{j\to\infty}\frac{1}{r_j(\tau_z \omega)} m_\mu(r_j(\tau_z \omega)y, 0, \tau_z\omega) =
      	\lim_{j\to\infty}\frac{1}{r_j(\tau_z \omega)} m_\mu(r_j(\tau_z \omega)y + z , z, \omega)\\
   	& \leq & 
      	\lim_{t\to\infty}\frac{1}{r_j(\tau_z \omega)} \left(m_\mu(r_j(\tau_z \omega)y+z,l_j(\omega)y,\omega) + 
	m_\mu(l_j(\omega)y,0,\omega) +  m_\mu(0,z,\omega) \right)\\
   	& \leq & 
      	\lim_{j\to\infty}\frac{1}{r_j(\tau_z \omega)} \left(m_\mu(l_j(\omega)y,0,\omega) + \left( \frac\mu\delta + \eta \right) 
	 |r_j(\tau_z \omega)-l_j(\omega)| |y|+2\left( \frac\mu\delta + \eta \right)  |z| \right)\\
  	& =  &
     	\lim_{j\to\infty}\frac{1}{l_j(\omega)} m_\mu(l_j(\omega)y,0,\omega)= \ovl m^\delta_\mu(y,\omega). 
 \end{eqnarray*}
A similar argument gives the reverse inequality and, hence, for all $z\in \ovl U^\delta(\omega)$, \eqref{eq:stationarity-avg-m} holds. 

Let $z'\in\R^n$. Then
\begin{eqnarray*}
    	\ovl m^\delta_\mu(y,\tau_{z'}\omega) 
    	=  \sup_{z\in \ovl U^\delta(\tau_{z'}\omega)}\ovl m^\delta_\mu(y,\tau_z\tau_{z'}\omega)
    	=  \sup_{z+z'\in \ovl U^\delta(\omega)}\ovl m^\delta_\mu(y,\tau_{z+z'}\omega)
    	=   \ovl m^\delta_\mu(y,\omega). 
\end{eqnarray*}

It is clear, from the construction, that the average metric $\ovl m^\delta_\mu$ is 1-positively homogeneous.

We show next that, in view of Lemma \ref{lemma::avg-metric-orig},  $\ovl m^\delta_\mu$ is Lipschitz continuous. Indeed, let $y,y'\in\R^n \setminus\{0\}$ and $\omega\in{\Omega^{\mu,\delta}_y}\cap \Omega^{\mu,\delta}_{y'}$. In view of $(S3)$, there exist two sequences $\big(l_j(\omega)y'\big)_{j\geq 0},\big(r_j(\omega)y\big)_{j\geq 0} \subset \ovl U^\delta(\omega)$ such that, as $j\to\infty$, $l_j(\omega)\to\infty, r_j(\omega)\to\infty$,  and  {$l_j(\omega)/r_j(\omega)\to 1$}. Then the subadditivity and Lipschitz continuity of $m_\mu$ yield the Lipschitz continuity of $\ovl m_\mu^\delta$ as follows:
\begin{eqnarray*}
    	\ovl m^\delta_\mu(y) 
    	& = & 	\lim_{j\to\infty} \frac{1}{r_j(\omega)} m_\mu(r_j(\omega)y,0,\omega) \\
    	& \leq & 	\lim_{j\to\infty}\frac{1}{r_j(\omega)} \big(m_\mu(l_j(\omega)y',0,\omega) 
      			+ m_\mu(r_j(\omega)y,l_j(\omega)y',\omega)\big)\\
    	& \leq & 	 \lim_{j\to\infty}\frac{1}{r_j(\omega)} \left( m_\mu(l_j(\omega)y',0,\omega)+
     			 \left(\frac\mu\delta+\eta\right)\left(|r_j(\omega)||y-y'|+|r_j(\omega)-l_j(\omega)||y'|\right)\right) \\  
    	& \leq &	 \ovl m_\mu^\delta(y')+\left( \frac\mu\delta + \eta \right)|y-y'|.
\end{eqnarray*}

The subadditivity of $\ovl m_\mu^\delta$ follows directly from the subadditivity of $m_\mu(\cdot,\cdot,\omega)$ in $U(\omega)$. For $\big(l_j(\omega)y\big)_{j\geq 0},\big(r_j(\omega)(y+z)\big)_{j\geq 0} \subset \ovl U^\delta_y(\omega)$  as before, we have 
\begin{eqnarray*}
	 \ovl m_\mu(y+z) 
	 &   =   & \lim_{j \to \infty} \frac{1}{r_j(\omega)} m_\mu(r_j(\omega)(y+z),0,\omega) \\
 	 & \leq & \lim_{j \to \infty} \frac{1}{r_j(\omega)} \big( m_\mu(r_j(\omega)(y+z),l_j(\omega) y,\omega) + 
	 m_\mu(l_j(\omega) y,0,\omega)\big) =  \ovl m_\mu(y) +  \ovl m_\mu(z) .
\end{eqnarray*}

That the average holds for an event of full probability for all $y\in\R^n$ is an immediate consequence of the the density of $\Q^n$, the Lipschitz continuity of $m_\mu$ and assumption $(S3)$.
\end{proof}

%----------------------------------------------------
Having established Lemma \ref{lemma::avg-metric-orig} and Lemma \ref{lemma::avg-metric-orig-det}, the proof of Theorem \ref{thm::avg-metric} is a consequence of Egoroff's theorem, the subadditive ergodic theorem and the Lipschitz estimates of $m_\mu$. Although the argument has already appeared in several references \cite{LS:10, AS:12}, for the benefit of the reader we present some of the details, since averaging takes place only locally in $U(\omega)$ and not in $\R^n$.

\begin{proof}[Proof of Theorem \ref{thm::avg-metric}]

To simplify the arguments we drop the dependence on $\mu$ of the probability events and assume, in view of Remark \ref{rk:0inU}, that $0\in U(\omega)$ a.s.. For each $R>0$, consider the process 
	\[  \mathcal M([s,t))(\omega) : = \sup_{|x|\leq R}
    	\left|\widetilde m^{\delta,x}_\mu(tx,sx,\omega) - (t-s)\ovl m^\delta_\mu(x)\right|,\]
where $\widetilde m^{\delta,x}_\mu(\cdot,\cdot,\omega) $ is the bilinear extension in direction $x$ of $m_\mu(\cdot,\cdot,\omega)$, given by \eqref{eq:metric-ext-bilinear}. Arguing as in the proof of Lemma \ref{lemma::avg-metric-orig}, it is easy to see that $\mathcal M$ is a continuous subadditive process. The subadditive ergodic theorem then yields, a.s. in $\omega\in\Omega^{\delta}$,
\begin{equation*}
    	\lim_{t\to\infty} \sup_{\substack{|x|\leq R}} 
    	\left| \frac{1}{t} \widetilde m^{\delta,x}_\mu(tx,0,\omega) -  \ovl m^\delta_\mu(x)\right|= 
    	\lim_{t\to\infty} \sup_{\substack{|x|\leq R\\ tx\in \ovl U^\delta(\omega)}} 
    	\left| \frac{1}{t}  m_\mu(tx,0,\omega) -  \ovl m^\delta_\mu(x)\right| = 0.
\end{equation*}
It follows from Egoroff theorem that, for any $\eps>0$, there exists $t_\eps>0$ and an event $W^{\delta,\eps}\subset\Omega^{\delta}$ such that $\bP[\Omega^{\delta}\setminus W^{\delta,\eps}]<\eps^n/4$ and, for all $t\geq t_\eps$, 
\begin{equation}\label{eq:egoroff}
  	\esssup_{\omega\in W^{\delta,\eps}} \sup_{\substack{|x|\leq R \\ tx\in \ovl U^\delta(\omega)}}
  	\left|\frac 1t \widetilde m^\delta_\mu(tx,0,\omega) - \ovl m_\mu^\delta(x) \right| \leq \eps.
\end{equation}
Applying the ergodic theorem to ${\bf 1}_{W^{\delta,\eps}}$ we find an event of full probability $\Omega^{\delta,\eps}\subseteq \Omega^{\delta}$, such that, for all $\omega\in\Omega^{\delta,\eps}$,
\begin{equation*}
  	\lim_{r\to\infty}\frac{1}{|B_r|} \int_{B_r} {\bf 1}_{W^{\delta,\eps}}(\tau_z\omega)dz = 
  	\bP[W^{\delta,\eps}]\geq 1-\frac{\eps^n}{4}.
\end{equation*}
Consider the event 
$\tilde \Omega^\delta  = \bigcap_{\eps\in(0,1)\cap \Q}\Omega^{\delta,\eps}$
such that $\bP[\tilde \Omega^{\delta}] = 1$. Then, for each $\eps>0$ and $\omega\in\Omega^{\delta}$, there exists $r_\eps>0$ such that, for all $r>r_\eps$,
\begin{equation}\label{eq:ergodic}
  	\left|\left\{ z\in B_r : \tau_z\omega\in W^{\delta,\eps}\right\}\right| > \left(1-\frac{\eps^n}{2}\right) |B_r|,
\end{equation}
which implies that no ball of radius $r\eps$ is contained in $\{z\in B_r: \tau_z\omega\not\in W^{\delta,\eps}\}$. 

In view of $(S3)$, for any $z\in B_r\setminus\{0\}$, there exist a sequence $\big(t_j(\omega)z\big)_{j\geq 0} \subset \ovl U^\delta(\omega)$ such that, as $j\to\infty$, $t_j(\omega)\to\infty$.
We deduce from \eqref{eq:ergodic} that, for each $j$ sufficiently large, there exists $z_j \in B_r$, such that $\tau_{t_j(\omega) z_j}\omega\in W^{\delta,\eps}$ and $|z- z_j|<\eps$.
Then \eqref{eq:egoroff} yields
 \begin{eqnarray*}
     	&&\mathcal M([0,t_j(\omega))) (\tau_{t_j(\omega)z}\omega) = 
       	\sup_{\substack{|x|\leq R \\ t_j(\omega)x \in \ovl U^\delta(\tau_{t_j(\omega)z}\omega)}}
       	\left| m_\mu(t_j(\omega)x, 0,\tau_{t_j(\omega) z}\omega) - t_j(\omega) \ovl m_\mu^\delta(x) \right|\\
    	&& \leq   
       	\sup_{\substack{|x|\leq R \\ t_j(\omega)(x +z_j)\in \ovl U^\delta(\omega)}}
        	\left|m_\mu(t_j(\omega)(x + z_j), t_j(\omega)z_j,\omega) - t_j(\omega) \ovl m_\mu^\delta(x) \right|  + 
        	2t_j(\omega){ \left( \frac\mu\delta + \eta \right) } |z- z_j|\\
     	&& = 
       	\sup_{\substack{|x|\leq R \\ t_j(\omega)x\in \ovl U^\delta(\tau_{t_j(\omega)z_j}\omega)}}
       	\left|m_\mu(t_j(\omega)x,0,\tau_{t_j(\omega)z_j}\omega) - t_j(\omega)\ovl m_\mu^\delta(x) \right| + 		2t_j(\omega){\left(\frac{\mu}{\delta}+\eta\right)}\eps\\
       	&&\leq t_j(\omega) \eps + 2t_j(\omega){\left(\frac{\mu}{\delta}+\eta\right)}\eps.
 \end{eqnarray*} 

Letting $\eps\to 0$ we find
	\[ \lim_{j\to\infty} \frac{1}{t_j(\omega)}\mathcal M([0,t_j(\omega)))(\tau_{t_j(\omega)z}\omega) =
	\lim_{t\to\infty}\sup_{\substack{|y-z|\leq R \\ ty,tz\in \ovl U^\delta(\omega)}}
 	\left|\frac 1t m_\mu(ty, tz,\omega) - \ovl m_\mu^\delta(y-z) \right| = 0.\]

Similarly we obtain an event of full probability independent of $\mu$.\smallskip

Finally, we check that $\ovl m_\mu^\delta$ is independent of $\delta$. Let $0<\delta_2<\delta_1<\ovl{\delta_0}$ such that $U^{\delta_i}(\omega)\subset U(\omega)$ for $i=1,2$, which, by (S2), remain connected.  In view of $(S3)$, for each fixed $y\in\R^n$ there exists a sequence $(t_j(\omega))_{j\geq 0}$ such that $(t_j(\omega) y)_{j\geq 0}\subset U^{\delta_2}(\omega)\subset U^{\delta_1}(\omega)$ and as $j\to\infty$, $t_j(\omega)\to\infty$. Consider  the event of full probability
$\ovl\Omega = \bigcap_{n\geq 1}\Omega^{\frac1n}.$ Then, for every $\omega \in \ovl\Omega$ ,
\begin{eqnarray*}
    	\ovl m^{\delta_1}_\mu(y) =  \lim_{j\to\infty} \frac{1}{t_j(\omega)} m_\mu(t_j(\omega)y,0,\omega) 
	= \ovl m^{\delta_2}_\mu(y).
\end{eqnarray*}
\end{proof}

We show in the following lemma that the liminf of the averaged $m_\mu$'s holds all the way up to the boundary.  However, the result  does not hold for the limsup, in view of the loss of Lipschitz estimates of $m_\mu's$ near $\partial U(\omega)$. In the case when $U(\omega)$ is a connected component of $\{x\in\R^n:a(x,\omega)<0\}$ the arguments are reverted: the metric $m_\mu$ is negative and the limsup of the averaged $m_\mu$ holds up to the boundary.

%--------------------------------------------
\begin{lemma}\label{lemma::up-to-bdry}
Assume that $(A1),(A2),(A3)$, $(S1),(S2)$ and $(S3)$ hold and let $\cU$ be given by $(S1)$.  For $\mu> 0$, let $m_\mu(\cdot,\cdot,\omega)$ be the solution of the metric problem \eqref{eq:metric-pb} and $\ovl m_\mu$ be the averaged metric corresponding to $\cU$.
For $y,z \in\R^n\setminus \{0\}$, $\omega \in \widetilde\Omega$ and $\delta>0$ sufficiently small,
	\[ \liminf_{\substack{t \to \infty\\ty,tz\in U(\omega)}} \frac{m_\mu(ty,tz,\omega)}{t} = 
	\lim_{\substack{t \to \infty\\ty,tz\in \ovl U^\delta(\omega)}} \frac{m_\mu(ty,tz,\omega)}{t}=\ovl{m}_\mu(y-z).\]
\end{lemma}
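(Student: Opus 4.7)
The plan is to split the first equality into two inequalities; the second equality is just Theorem \ref{thm::avg-metric} applied at $x = y-z$ with any $R > |y-z|$. For ``$\leq$'' I would observe that by $(S3)$ one can find sequences $t_j \to \infty$ along which $t_j y, t_j z \in \ovl U^\delta(\omega)$; these are admissible in the unrestricted $\liminf$, and along them Theorem \ref{thm::avg-metric} yields the limit $\ovl m_\mu(y-z)$. The real work is the ``$\geq$'' direction.

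For this direction, I would fix an arbitrary sequence $t_j \to \infty$ with $t_j y, t_j z \in U(\omega)$ and approximate it by nearby rescaled points inside $\ovl U^\delta(\omega)$. If $t_j y \in \ovl U^\delta(\omega)$ I set $s_j := t_j$; otherwise $t_j$ lies in one of the random open intervals $(\sigma_k(\omega), \tau_k(\omega))$ covering $I^\delta_y(\omega)$, and I set $s_j := \sigma_k$. In either case $s_j y \in \ovl U^\delta(\omega)$, and $s_j / t_j \geq \sigma_k/\tau_k \to 1$ as $j \to \infty$, since $t_j \to \infty$ forces the index $k = k(j) \to \infty$. An analogous construction yields $r_j$ with $r_j z \in \ovl U^\delta(\omega)$ and $r_j/t_j \to 1$.

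The core of the argument is a chained application of the Lipschitz-growth Lemma \ref{lemma::improve-Lip}, stitched together via the symmetry of $m_\mu$ (Lemma \ref{lem::mp-sym}). With base point $s_j y \in U^\delta(\omega)$, the standard form of the estimate gives
\[
m_\mu(s_j y, t_j z, \omega) \geq m_\mu(s_j y, r_j z, \omega) - \left(\tfrac{\mu}{\delta} + \eta\right)(t_j - r_j)|z|.
\]
A second application, this time with base point $t_j z$ possibly in $U(\omega) \setminus U^\delta(\omega)$, gives
\[
m_\mu(t_j y, t_j z, \omega) \geq m_\mu(s_j y, t_j z, \omega) - \left(\tfrac{\mu}{\delta} + \eta\right)(t_j - s_j)|y|.
\]
Chaining these, dividing by $t_j$, and letting $j \to \infty$, the error terms vanish because $s_j/t_j, r_j/t_j \to 1$, and by Theorem \ref{thm::avg-metric} $m_\mu(s_j y, r_j z, \omega)/t_j \to \ovl m_\mu(y-z)$ (since $s_j y/t_j \to y$, $r_j z/t_j \to z$ and $s_j y, r_j z \in \ovl U^\delta(\omega)$). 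Arbitrariness of the sequence then yields the desired inequality.

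The hard part will be the second application of Lemma \ref{lemma::improve-Lip}, whose hypothesis requires the base point to lie in $U^\delta(\omega)$. I would circumvent this by rerunning the sup-convolution proof of that lemma with the base point merely in $U(\omega)$: Lemma \ref{lemma::Lipschitz} still supplies the $(\mu/\delta + \eta)$-Lipschitz bound of $m_\mu(\cdot, t_j z, \omega)$ on $U^\delta(\omega)$ for any $t_j z \in U(\omega)$, while Proposition \ref{prop::blow-up} guarantees that the sup-convolution extension of this trace to $\R^n$ is pointwise dominated by $m_\mu(\cdot, t_j z, \omega)$ on $U(\omega)$, the logarithmic blow-up at $\partial U(\omega)$ providing the missing global comparison.
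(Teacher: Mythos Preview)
Your argument is correct. The paper takes a shorter route: it first invokes Remark~\ref{rk:0inU} together with the Lipschitz continuity of $m_\mu$ to reduce to the case $z=0\in U^\delta(\omega)$, so that only the $y$-endpoint can fall in $U(\omega)\setminus U^\delta(\omega)$. A single application of Lemma~\ref{lemma::improve-Lip} (with base point $0\in U^\delta(\omega)$) then suffices, and the issue you flag about the second application never arises. Your approach keeps both endpoints general and therefore needs two chained one-sided Lipschitz estimates; the second forces the base point $t_j z$ to lie possibly outside $U^\delta(\omega)$, which you resolve by observing that the proof of Lemma~\ref{lemma::improve-Lip} uses only Lemma~\ref{lemma::Lipschitz} and Proposition~\ref{prop::blow-up}, both valid for any $z\in U(\omega)$. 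That extension is legitimate --- indeed the hypothesis $z\in U^\delta(\omega)$ in Lemma~\ref{lemma::improve-Lip} is stronger than its proof actually requires. Your route is more explicit at the cost of the extra chaining step; the paper's reduction is terser, but the phrase ``we may assume for simplicity that $z=0$'' conceals essentially the same manipulation you carry out in detail.
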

%--------------------------------------------------

\begin{proof}
In view of Remark \ref{rk:0inU} and the Lipschitz continuity of $m_\mu$, we may assume for simplicity that $z=0$. Let $\eta>0$, $\ovl\delta_0(\eta)$ be given by Lemma \ref{lemma::Lipschitz} and fix $\delta\in(0,\ovl\delta_0)$. We only need to check the result for points $ty \in U(\omega) \setminus U^\delta(\omega)$.

In view of $(S3)$, there exist $\left(l_j(\omega) y\right)_{j\geq 0}\subset U(\omega) \setminus U^\delta(\omega)$  and  $\left(r_j(\omega) y\right)_{j\geq 0} \subset U^\delta(\omega)$ such that, as $j\to\infty$,  $l_j(\omega) \to \infty$, $r_j(\omega) \to \infty$ and $l_j(\omega)/r_j(\omega)\to 1$. In light of Lemma \ref{lemma::improve-Lip} we have 
\begin{eqnarray*}
    m_\mu(l_j(\omega) y,0,\omega) & \geq & m_\mu(r_j(\omega)y,0,\omega) -\left(\frac{\mu}{\delta}+\eta\right)|l_j(\omega)-r_j(\omega)||y|.
\end{eqnarray*}
Theorem \ref{thm::avg-metric}  yields further that
\begin{eqnarray*}
    \liminf_{j\to \infty} \frac{m_\mu(l_j(\omega) y,0,\omega)}{l_j(\omega)} 
    & \geq &  \lim_{j\to \infty} \left(\frac{m_\mu(r_j(\omega) y,0,\omega)}{l_j(\omega)}- \left(\frac{\mu+\delta\eta}{\delta}\right)\frac{|l_j(\omega)-r_j(\omega)|}{l_j(\omega)}|y| \right)\\
    &   =  &  \lim_{j\to \infty}\frac{m_\mu(r_j(\omega) y,0,\omega)}{r_j(\omega)}\frac{r_j(\omega)}{l_j(\omega)} 	=  \ovl{m}_\mu(y).
\end{eqnarray*}
The conclusion follows.
\end{proof}

The next properties follow  immediately from Lemma \ref{lemma::avg-metric-orig-det} and Proposition \ref{prop::monotone-mu}.
%------------------------------------------------
\begin{proposition}[Properties of the averaging metric]
Assume that $(A1),(A2),(A3),(S1),(S2)$ and $(S3)$ hold and let $\cU\in\cF$ given by $(S1)$. For every $\mu>0$, let $\ovl m_\mu :\R^n\to\R$ be  the averaged metric corresponding to $\cU$. Then
\begin{enumerate}[(i)]
    \item $  y\mapsto \ovl m_\mu(y)$ is Lipschitz continuous with Lipschitz constant depending only on $\mu$.
    \item $  y\mapsto \ovl m_\mu(y)$ is subadditive, positively homogeneous and, hence, convex.
    \item $\mu\mapsto \ovl m_\mu(y)$ is increasing and locally uniformly  continuous in $y\in\R^n$.
\end{enumerate}
\end{proposition}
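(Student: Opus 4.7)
The plan is to transfer the properties already established for the $\delta$-level averaged metric $\ovl m_\mu^\delta$ in Lemma~\ref{lemma::avg-metric-orig-det} to the true averaged metric $\ovl m_\mu$, exploiting the $\delta$-independence proven at the end of the proof of Theorem~\ref{thm::avg-metric}. With this identification, (i) and (ii) are essentially immediate, and (iii) needs one additional observation.

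For (i), fix $\eta>0$ and any admissible $\delta_*\in(0,\ovl\delta_0(\eta))$. Lemma~\ref{lemma::avg-metric-orig-det} gives that $\ovl m_\mu^{\delta_*}$ is Lipschitz with constant at most $\mu/\delta_*+\eta$. By the $\delta$-independence, $\ovl m_\mu=\ovl m_\mu^{\delta_*}$, hence the Lipschitz constant depends only on $\mu$ (once the structural choices of $\eta$ and $\delta_*$ are frozen). For (ii), subadditivity and $1$-positive homogeneity are transferred in the same way from Lemma~\ref{lemma::avg-metric-orig-det}, and convexity follows from them via
\[
\ovl m_\mu(\alpha y+(1-\alpha)z)\leq \ovl m_\mu(\alpha y)+\ovl m_\mu((1-\alpha)z)=\alpha\ovl m_\mu(y)+(1-\alpha)\ovl m_\mu(z)
\]
for $\alpha\in[0,1]$ and $y,z\in\R^n$.

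For (iii), monotonicity in $\mu$ is read off from Proposition~\ref{prop::monotone-mu}: if $\mu_1<\mu_2$ and $\bigl(t_j(\omega)\bigr)_{j\ge0}$ is a sequence supplied by $(S3)$ with $t_j(\omega)y\in\ovl U^\delta(\omega)$, then $m_{\mu_1}(t_j(\omega)y,0,\omega)\le m_{\mu_2}(t_j(\omega)y,0,\omega)$; dividing by $t_j(\omega)$ and invoking Theorem~\ref{thm::avg-metric} gives $\ovl m_{\mu_1}(y)\le\ovl m_{\mu_2}(y)$. For the local uniform continuity in $y$, the cleanest route is to observe the exact scaling $m_\mu=\mu\,m_1$: substituting $w=\mu v$ in the definition \eqref{eq:def-max-subsol} sets up a bijection between admissible test functions at level $\mu$ and at level $1$, since the constraint $a|Dw|\le\mu$ becomes $a|Dv|\le1$, and the class $\cL_\mu(U(\omega))$ rescales correctly (the bound $|w(y_1)-w(y_2)|\le(\mu/\delta+\eta)|y_1-y_2|$ becomes, after division by $\mu$, the analogous bound with $1/\delta+\eta/\mu$, placing $v$ in $\cL_1(U(\omega))$ upon renaming). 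Taking the subadditive limit yields $\ovl m_\mu(y)=\mu\,\ovl m_1(y)$, which is linear in $\mu$ and Lipschitz in $y$, so that $|\ovl m_{\mu_j}(y)-\ovl m_\mu(y)|\le C|\mu_j-\mu|\,|y|$ uniformly on bounded sets.

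The only step requiring care is (iii), and the main obstacle is interchanging the $t\to\infty$ limit that defines $\ovl m_\mu$ with the $\mu_j\to\mu$ limit. The scaling identity $\ovl m_\mu=\mu\,\ovl m_1$ sidesteps this entirely; alternatively one could combine the local uniform $\mu$-continuity of $m_\mu$ from Proposition~\ref{prop::monotone-mu} with the equi-Lipschitz bound from (i) and an Arzelà--Ascoli argument, but the scaling route is shorter and self-contained.
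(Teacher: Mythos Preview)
Your argument is correct. Parts (i) and (ii) are exactly what the paper intends: the paper simply states that the proposition ``follows immediately from Lemma~\ref{lemma::avg-metric-orig-det} and Proposition~\ref{prop::monotone-mu}'', and you have spelled out the transfer via the $\delta$-independence established at the end of the proof of Theorem~\ref{thm::avg-metric}.

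For (iii) you take a genuinely different route. The paper points to Proposition~\ref{prop::monotone-mu}, which gives continuity of $\mu\mapsto m_\mu(y,z,\omega)$ only locally uniformly on compact subsets of $U(\omega)\times U(\omega)$; passing this through the $t\to\infty$ limit that defines $\ovl m_\mu$ requires exactly the limit interchange you flag. Your observation that the $1$-homogeneity of $H(p,y,\omega)=a(y,\omega)|p|$ forces the exact identity $m_\mu=\mu\, m_1$ (and hence $\ovl m_\mu=\mu\,\ovl m_1$) sidesteps this completely and gives the locally uniform continuity in one line. The verification that $w\mapsto w/\mu$ is a bijection between admissible functions in \eqref{eq:def-max-subsol} at levels $\mu$ and $1$ is correct: the differential constraint rescales exactly, and the $\cS_\mu$ membership transfers with $\ovl\delta_v(\eta')=\ovl\delta_w(\mu\eta')$, which still tends to $0$ as $\eta'\to0$. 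This scaling argument is specific to the positively homogeneous Hamiltonian at hand and would not be available for the general level-set convex Hamiltonians of \cite{AS:13}, but here it is both shorter and more transparent than the route via Proposition~\ref{prop::monotone-mu}.
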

%------------------------------------------------

%=================================================================================
\medskip\section{The Effective Hamiltonian}\label{sec::EffHamiltonian}
%=================================================================================

Since the connected components $(U_i(\omega))_{i\in I_0}$ are generated by a family $(\cU_i)_{i\in I_0}$ of the probability space $\Omega$, it is possible to show that, for each $U_i(\omega)$ with $i\in I$, there exists a deterministic, effective Hamiltonian $\ovl H_i$.  

As usual, we fix  one connected component $ U(\omega)$ of $\{x\in\R^n: a(x,\omega)>0\}$ and characterize the corresponding effective Hamiltonian $\ovl H(p)$.  Similar arguments remain true for the connected components of $\{x\in\R^n: a(x,\omega)<0\}$.

%------------------------------------------------------------
\subsection*{The characterization of the effective Hamiltonian}
%------------------------------------------------------------
We first define the effective Hamiltonian as the smallest constant for which the metric problem has a subsolution with stricly sublinear decay at infinity. We  then give an inf-sup representation formula for the effective Hamiltonian,  which by the stationarity assumption holds a.s. in $\omega$. Unlike previous results \cite{AS:13, LS:05} where subsolutions are  in $\R^n$, now they are restricted to satisfy the equation in the connected component $U(\omega)$. Moreover, in view of the well posedness of the metric problem, they must satisfy the  constraint imposed by the class $\cL_\mu(U(\omega))$.

% --------------------------------------------------------------
The {effective Hamiltonian} $\ovl H(p,\omega)$ corresponding to  $U(\omega)$ is given by
\begin{eqnarray}\label{eq:effH} \nonumber 
  	\ovl H(p,\omega)  =   \inf \Big\{  \mu>0 :  
	    	\hbox{ there exists } w(\cdot,\omega)\in \cS^+
            	\hbox{ with } w(\cdot,\omega)+p\cdot y\in \cL_\mu(U(\omega)) \;\;\; \\
   	    	\hbox{ such that } a(y,\omega)|p+Dw| \leq \mu \hbox{ in } \;\;\;U(\omega)\Big\}.
\end{eqnarray}
% --------------------------------------------------------------

In view of Lemma \ref{lem:equiv-soln} the differential inequality in \eqref{eq:effH} may be interpreted either in the viscosity sense or in the almost everywhere sense. Note that $w(\cdot,\omega)$ is defined in $\R^n$, but it is only required to be subsolution in the domain $U(\omega)$.

It follows from the stationarity of $a$ and of the level sets $U(\omega)$ and the ergodicity assumption that the effective Hamiltonian is deterministic. More precisely, the following holds.
%------------------------------------------------
\begin{proposition}\label{prop::min-max-H}
Assume $(A1)$ and $(S1)$. There exists a set of full probability $\tilde \Omega\subseteq \Omega$ such that, for every $\omega\in\tilde \Omega$ and $p\in\R^n$, 
\begin{eqnarray*}
  	\ovl H(p) = \ovl H(p,\omega) = 
    	\inf_{\substack{(w,\lambda) \in \cS^+\times(0,\infty)\\
		    w+p\cdot y\in{\cL_\lambda(U(\omega))}}} 
    	\left[\esssup_{y \in U(\omega)} \
    	\Big( a(y,\omega)|p+Dw| \Big)\right].
\end{eqnarray*}
\end{proposition}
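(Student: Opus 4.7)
The plan is to establish the equality of the two formulas for $\ovl H(p,\omega)$ and then invoke stationarity and ergodicity to conclude the deterministic character. Denote the right-hand side inf-sup by $\beta(p,\omega)$. For the direction $\beta(p,\omega) \leq \ovl H(p,\omega)$, any $\mu > 0$ admissible in the definition of $\ovl H(p,\omega)$ comes with a $w \in \cS^+$ satisfying $w + p \cdot y \in \cL_\mu(U(\omega))$ and $a(y,\omega)|p+Dw|\leq \mu$ in $U(\omega)$; taking the pair $(w, \mu)$ in the inf-sup and using Lemma \ref{lem:equiv-soln} to pass from the viscosity to the almost everywhere inequality yields $\esssup_{U(\omega)} a|p+Dw| \leq \mu$, hence $\beta(p,\omega) \leq \mu$ and the direction follows by taking the infimum.

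For the reverse direction $\ovl H(p,\omega) \leq \beta(p,\omega)$, I would take an admissible pair $(w,\lambda)$ with $\esssup a|p+Dw| = \nu$. Lemma \ref{lem:equiv-soln} converts the pointwise inequality to a viscosity subsolution; the membership $w+p\cdot y \in \cL_\lambda(U(\omega)) \subset \cL$ gives a global Lipschitz bound, and on each $U^\delta(\omega)$ the almost everywhere estimate $|D(w+p\cdot y)| \leq \nu/\delta$ combined with connectedness from $(S2)$ yields that $w+p\cdot y \in \cS_\nu(U(\omega))$ with constant $\nu/\delta$ (strictly less than $\nu/\delta+\eta$). Hence $w+p\cdot y \in \cL_\nu(U(\omega)) \subset \cL_\mu(U(\omega))$ for every $\mu > \nu$, so $\mu$ is admissible in the defining set of $\ovl H(p,\omega)$. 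Sending $\mu \downarrow \nu$ and then taking the infimum over $(w,\lambda)$ closes this direction.

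The deterministic part follows from a standard stationarity-plus-ergodicity argument. Given an admissible $w$ for $\ovl H(p,\omega)$ with parameter $\mu$, the translate $w'(y) := w(y+z,\omega)$ is admissible for $\ovl H(p,\tau_z\omega)$ with the same $\mu$: using $(A1)$ one has $a(y,\tau_z\omega) = a(y+z,\omega)$, while $(S1)$ gives $U(\tau_z\omega) = U(\omega) - z$, so the a.e. subsolution inequality transfers; the classes $\cS^+$ and $\cL_\mu(U(\cdot))$ are preserved under translation (up to an affine correction $-p\cdot z$ which is absorbed). Therefore $\omega \mapsto \ovl H(p,\omega)$ is $\tau_z$-invariant for every $z$; ergodicity of $(\tau_z)_{z\in\R^n}$ then produces, for each fixed $p$, an event $\Omega_p$ of full probability on which $\ovl H(p,\omega)$ equals a deterministic constant $\ovl H(p)$.

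To make the event independent of $p$, I would first observe that $p \mapsto \ovl H(p,\omega)$ is Lipschitz uniformly in $\omega$: if $w$ is admissible for $\ovl H(p,\omega)$ with parameter $\mu$, then $a|q+Dw| \leq \mu + \|a\|_\infty|p-q|$ and the $\cL$-containment upgrades to $\cL_{\mu'}(U(\omega))$ for $\mu' := \mu + \|a\|_\infty |p-q|$, yielding a Lipschitz bound on $\ovl H(\cdot,\omega)$ with constant $\|a\|_\infty$. Setting $\tilde\Omega := \bigcap_{p \in \Q^n}\Omega_p$ and extending by continuity gives the desired single event of full probability. The main subtlety is in the first part of the equality: one must verify the $\cL_\mu(U(\omega))$ membership despite the class being defined through a Lipschitz bound $\mu/\delta+\eta$ that depends on $\mu$, and the key is that the pointwise inequality $a|p+Dw| \leq \nu$ combined with the connectedness of $U^\delta(\omega)$ from $(S2)$ delivers exactly the sharp constant $\nu/\delta$ needed.
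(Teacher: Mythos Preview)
The paper does not supply a proof of this proposition; it only asserts that the deterministic nature ``follows from the stationarity of $a$ and of the level sets $U(\omega)$ and the ergodicity assumption'' and records the inf--sup formula without further argument. Your treatment of the deterministic part---showing $\omega\mapsto\ovl H(p,\omega)$ is $\tau_z$-invariant via the translated test function $w'(y)=w(y+z,\omega)$, invoking ergodicity for each $p$, and then intersecting over rational $p$ using a uniform Lipschitz bound in $p$---is exactly the intended argument and is correct.

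There is, however, a genuine gap in your derivation of the equality of the two formulas, in the direction $\ovl H(p,\omega)\le\beta(p,\omega)$. You argue that if $\esssup_{U(\omega)}a|p+Dw|=\nu$, then the almost-everywhere bound $|D(w+p\cdot y)|\le \nu/\delta$ on $U^\delta(\omega)$, together with connectedness of $U^\delta(\omega)$ from $(S2)$, yields the \emph{Euclidean} Lipschitz estimate
\[
|w(y_1)+p\cdot y_1-w(y_2)-p\cdot y_2|\le\frac{\nu}{\delta}\,|y_1-y_2|\qquad\text{for all }y_1,y_2\in U^\delta(\omega),
\]
and hence $w+p\cdot y\in\cS_\nu(U(\omega))$. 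This step is not justified: an a.e.\ gradient bound on a connected open set controls the Lipschitz constant only with respect to the \emph{intrinsic} (geodesic) distance of that set, not the Euclidean distance. Since $U^\delta(\omega)$ is in general far from convex (for the percolation and Poisson environments of Section~\ref{sec::examples} the geodesic-to-Euclidean ratio is unbounded), one cannot conclude membership in $\cS_\nu(U(\omega))$ this way, and the extra $\eta$ slack in the definition does not absorb the discrepancy. Note also that the proposition assumes only $(A1)$ and $(S1)$, so even invoking $(S2)$ goes beyond the stated hypotheses.

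The reason the inf--sup formula nonetheless holds is not an elementary manipulation but the structure of the metric problem: for the near-optimal choice $w(\cdot)=m_\mu(\cdot,z,\omega)-p\cdot(\cdot-z)$ the membership $m_\mu\in\cS_\mu(U(\omega))$ is built into the definition \eqref{eq:def-max-subsol} (see Lemma~\ref{lemma::Lipschitz}), and Proposition~\ref{prop::liminf-H} then links the two descriptions. In other words, the class $\cL_\mu(U(\omega))$ is calibrated so that the maximal subsolutions $m_\mu$ lie in it by construction; arbitrary admissible $w$ need not, and the equality of formulas should be read through that lens rather than as a self-contained identity.
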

%------------------------------------------------

In view of the inf-sup representation formula, we establish some immediate properties of $\overline H$, which we state without a proof.
%------------------------------------------------
\begin{proposition}[Convexity, Homogeneity]\label{prop::convexity-H}
Assume $(A1)$, $(S1)$ hold and let  $\cU\in\cF$ be given by $(S1)$. The effective Hamiltonian $\ovl H$ corresponding to $\cU$ is convex, 1-positively homogeneous and
\[\min_{p\in\R^n} \ovl H(p)=\ovl H(0)=0.\]
\end{proposition}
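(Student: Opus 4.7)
The plan is to work directly from the inf-sup representation established in Proposition \ref{prop::min-max-H},
\[
\ovl H(p) = \inf_{\substack{(w,\lambda) \in \cS^+\times(0,\infty)\\ w+p\cdot y\in \cL_\lambda(U(\omega))}} \esssup_{y \in U(\omega)} \big(a(y,\omega)|p+Dw|\big),
\]
and to deduce the three claims by exhibiting natural algebraic operations on admissible test pairs $(w,\lambda)$ that relate $\ovl H$ at different values of $p$. The strategy mirrors the standard proof for cell-problem Hamiltonians, with the wrinkle that one must track membership in the nonstandard Lipschitz class $\cL_\lambda(U(\omega))$.

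For the $1$-positive homogeneity, fix $t>0$. I would argue that $(w,\lambda)\mapsto(tw,t\lambda)$ is a bijection between admissible pairs for $p$ and admissible pairs for $tp$. Membership in $\cS^+$ is preserved because $\cS^+$ is a cone. For the Lipschitz class, if $w+p\cdot y$ satisfies the bound $\lambda/\delta+\eta$ on $U^\delta(\omega)$, then $tw+(tp)\cdot y$ satisfies $t\lambda/\delta+t\eta$ there; setting $\eta':=t\eta$ and $\bar\delta'(\eta'):=\bar\delta(\eta'/t)$, both tending to zero, shows $tw+(tp)\cdot y\in\cL_{t\lambda}(U(\omega))$. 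Since $a(y,\omega)|tp+D(tw)|=t\,a(y,\omega)|p+Dw|$, the essential suprema scale exactly by $t$, and taking inf on both sides gives $\ovl H(tp)=t\ovl H(p)$. Specializing to $p=0$ with $t\ne1$ already forces $\ovl H(0)=0$.

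For convexity, I would fix $p_1,p_2\in\R^n$ and $\alpha\in(0,1)$, pick test pairs $(w_i,\lambda_i)$ admissible for $p_i$, and form
\[
(w,\lambda):=\big(\alpha w_1+(1-\alpha)w_2,\ \alpha\lambda_1+(1-\alpha)\lambda_2\big),\qquad p:=\alpha p_1+(1-\alpha)p_2.
\]
The class $\cS^+$ is closed under convex combinations, and $w+p\cdot y\in\cL_\lambda(U(\omega))$ follows from $\alpha(\lambda_1/\delta+\eta)+(1-\alpha)(\lambda_2/\delta+\eta)=\lambda/\delta+\eta$. The convexity of $|\cdot|$ together with the nonnegativity of $a(\cdot,\omega)$ on $U(\omega)$ then yield
\[
a(y,\omega)|p+Dw|\le\alpha\,a(y,\omega)|p_1+Dw_1|+(1-\alpha)\,a(y,\omega)|p_2+Dw_2|,
\]
so the essential supremum on the left is bounded by the convex combination on the right. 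Taking successive infima over $(w_1,\lambda_1)$ and $(w_2,\lambda_2)$ yields $\ovl H(p)\le\alpha\ovl H(p_1)+(1-\alpha)\ovl H(p_2)$.

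For the minimum identity, $\ovl H(0)=0$ can also be obtained directly by taking $w\equiv0$ in the inf-sup formula, which gives $\esssup_{U(\omega)}0=0$; meanwhile $\ovl H(p)\ge 0$ for every $p$ follows from $a(\cdot,\omega)>0$ in $U(\omega)$, forcing every admissible test pair to have a nonnegative essential supremum. Combining the two yields $\min_p\ovl H(p)=\ovl H(0)=0$. The main obstacle I foresee is the Lipschitz-class bookkeeping in the homogeneity step, where one must carefully absorb the factor of $t$ into the two small parameters $\eta$ and $\bar\delta(\eta)$ entering the definition of $\cL_\lambda(U(\omega))$; once that is handled, the remaining verifications are routine.
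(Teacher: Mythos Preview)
The paper states this proposition without proof, remarking only that it follows from the inf-sup representation of Proposition \ref{prop::min-max-H}. Your argument is precisely the natural one the authors have in mind: it exploits the cone structure of $\cS^+$, the scaling and convex-combination stability of the class $\cL_\lambda(U(\omega))$, and the convexity of $|\cdot|$ together with the nonnegativity of $a$ on $U(\omega)$. The bookkeeping for $\cL_\lambda$ is handled correctly; the only point you leave implicit in the convexity step is that for the combined pair one should take $\bar\delta(\eta):=\min(\bar\delta_1(\eta),\bar\delta_2(\eta))$, but this is straightforward. The proof is correct and complete.
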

%------------------------------------------------

The next result is essential to our analysis and establishes that the effective Hamiltonian corresponding to bounded connected components is null.
%------------------------------------------------
\begin{proposition}\label{prop::barH-domainU}
Assume $(A1)$, $(S1)$ hold and let $\cU\in\cF$ be given by $(S1)$. If $ \;\ovl H$ is the effective Hamiltonian corresponding to $\cU$ and $U(\omega)$ is bounded, then $\ovl H(p) = 0$.
\end{proposition}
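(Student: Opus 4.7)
The plan is to exploit the boundedness of $U(\omega)$ to construct, for each $p\in\R^n$, an explicit admissible test function in the variational characterization of $\ovl H(p,\omega)$ that makes the Hamiltonian vanish identically on $U(\omega)$. Combined with the nonnegativity of $\ovl H$ established in Proposition \ref{prop::convexity-H}, this forces $\ovl H(p)=0$ for every $p$.

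Concretely, I would fix $\omega$ in the event of full probability produced by Proposition \ref{prop::min-max-H} with $U(\omega)$ bounded, pick $R>0$ with $\ovl{U(\omega)}\subset B_R$, and choose a cutoff $\chi\in \mathcal C_c^\infty(\R^n)$ with $0\le\chi\le 1$, $\chi\equiv 1$ on $B_{R+1}$, and $\mathrm{supp}\,\chi\subset B_{R+2}$. The candidate test function is then
\[
w(y) := -(p\cdot y)\,\chi(y),
\]
which is Lipschitz on $\R^n$ with compact support, hence bounded, so $w(y)/|y|\to 0$ as $|y|\to\infty$ and therefore $w\in\cS^+$.

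Next I would check the two remaining admissibility conditions. Since $\chi\equiv 1$ on an open neighborhood of $\ovl{U(\omega)}$, the function $v(y):=w(y)+p\cdot y$ satisfies $v\equiv 0$ on $U(\omega)$ and $Dw(y)=-p$ on $U(\omega)$; in particular $a(y,\omega)|p+Dw(y)|=0$ everywhere on $U(\omega)$. Moreover $v$ is globally Lipschitz on $\R^n$ (bounded plus affine restricted to the complement of $\mathrm{supp}\,\chi$), and since $v\equiv 0$ on each $U^\delta(\omega)\subset U(\omega)$, the quantitative Lipschitz bound $\mu/\delta+\eta$ required by $\cS_\mu(U(\omega))$ holds trivially for any $\mu>0$. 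Thus $v\in\cL_\mu(U(\omega))$ and the pair $(w,\mu)$ is admissible in the defining infimum of $\ovl H(p,\omega)$ for every $\mu>0$, which gives $\ovl H(p,\omega)\le \mu$ for all $\mu>0$ and hence $\ovl H(p,\omega)=0$.

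I do not anticipate a serious obstacle: the boundedness of $U(\omega)$ is exactly what allows a compactly supported cutoff to absorb the linear term $p\cdot y$ inside $U(\omega)$, eliminating any tension between the strict sublinearity at infinity required by $\cS^+$ and the pointwise cancellation of the gradient needed to kill the Hamiltonian. The only mild subtlety is confirming membership in $\cL_\mu(U(\omega))$ for arbitrarily small $\mu$, but this is immediate from the identical vanishing of $v$ on $U(\omega)$, which makes the $\delta$-dependent bound on $U^\delta(\omega)$ hold vacuously.
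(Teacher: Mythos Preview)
Your proposal is correct and essentially identical to the paper's proof: the paper also chooses a compactly supported smooth function equal to $-p\cdot y$ on $U(\omega)$, observes it lies in $\cS^+$ with $\phi+p\cdot y\in\cL_\mu(U(\omega))$ for every $\mu>0$, and concludes $0\le\ovl H(p)\le\esssup_{U(\omega)}a(y,\omega)|p+D\phi|=0$. Your explicit cutoff construction $w(y)=-(p\cdot y)\chi(y)$ is just a concrete realization of the same test function.
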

%------------------------------------------------

\begin{proof} 
Since $U(\omega)$ is bounded, there exists $R=R(\omega)>0$ such that $U(\omega) \subset B_R$. Let $\phi(\cdot,\omega) \in \mathcal C_c^\infty(\R^n)$ such that
	$ \phi(y,\omega) = -p\cdot y \hbox{ in } \;\;\; U(\omega) \hbox{ and }
   	\phi(\cdot,\omega) \equiv 0\hbox{ in }\R^n \setminus B_{R+1}.$
Then $\phi(\cdot,\omega)\in \cS^+$,  for all $\mu>0$, $\phi(\cdot,\omega)+p\cdot y \in \cL_\mu (U(\omega)))$ and, in view of Proposition \ref{prop::min-max-H},
	\[0 \le \ovl H(p) \le \esssup_{y \in U(\omega)} a(y,\omega)|p+D\phi| =0.\]
\end{proof}

%------------------------------------------------------------
\subsection*{The connection between the effective Hamiltonian and the metric problem}
%------------------------------------------------------------

We establish here the connection between the effective Hamiltonian $\ovl H(p)$ and the solution $m_\mu(y,z,\omega) - p\cdot (y-z)$ of the corresponding metric problem with $\mu = \ovl H(p)$. It turns out that the effective Hamiltonian  is the smallest constant $\mu$ for which $m_\mu(y,z,\omega)-p\cdot (y-z)$ has strictly sublinear decay at infinity, from below. We illustrate in Figure \ref{fig::blow-up-asym} the profile of the maximal solution $m_\mu$, which, although blows up at $\partial U_i(\omega)$, has a sublinear decay from below. Lemma \ref{lemma::improve-Lip} is essential in establishing this result.
%-------------------------------------------------
\begin{figure}[!ht]
    	\centering\includegraphics[width=0.6\linewidth]{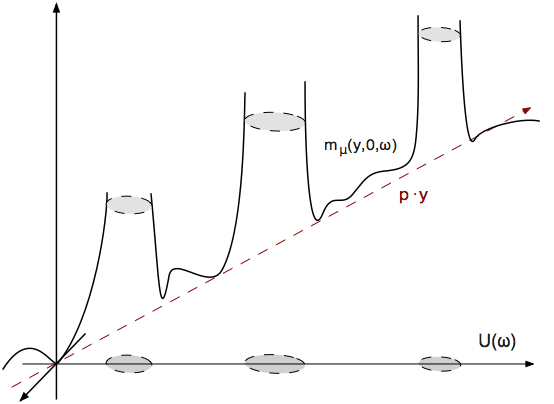} 
    	\caption{Profile of the maximal solution $m_\mu(\cdot,z,\omega)$. Asymptotic strictly sublinear decay.}
    	\label{fig::blow-up-asym}  
\end{figure}
%-------------------------------------------------

%-----------------------------------------------
\begin{proposition}\label{prop::liminf-H}
Assume that $(A1)$ and $(S1)$ hold and let $\cU$ be given by $(S1)$. For $\mu> 0$ and $z\in U(\omega)$, let $m_\mu(\cdot,z,\omega)$ and $\ovl H$ be  respectively the maximal subsolution of the metric problem \eqref{eq:metric-pb} and the effective Hamiltonian corresponding to $\cU$. Then, for each $p\in\R^n$ and $\omega \in \ovl\Omega$,
\begin{equation}\label{eq:Hbar-m_sublinear}
   	 \mu\geq \ovl H(p) \ \hbox{ if and only if } \ 
   	 \liminf_{\substack{|y|\to\infty\\y\in U(\omega)}}
    	\frac{m_\mu(y,z,\omega)-p\cdot (y-z)}{|y|} \ge 0.
\end{equation}
\end{proposition}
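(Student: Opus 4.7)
\textbf{Proof plan for Proposition \ref{prop::liminf-H}.}

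The proposition is an equivalence, and I would establish each direction separately using the inf-sup representation of $\ovl H(p)$ from Proposition \ref{prop::min-max-H} and the maximality property of $m_\mu$ from Lemma \ref{lemma::Lipschitz}.

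For the forward implication I would first treat the strict case $\mu>\ovl H(p)$: by Proposition \ref{prop::min-max-H} there exist $\lambda>0$ and $w(\cdot,\omega)\in \cS^+$ with $w+p\cdot y\in \cL_\lambda(U(\omega))$ and $a(y,\omega)|p+Dw|\leq \mu$ in $U(\omega)$. The function $v(y):=w(y)+p\cdot y-w(z)-p\cdot z$ vanishes at $z$, lies in $\cL_\mu(U(\omega))$, and satisfies $a(y,\omega)|Dv|\leq \mu$ in $U(\omega)$, so $v$ is admissible in the definition \eqref{eq:def-max-subsol} of $m_\mu$. Maximality yields $m_\mu(y,z,\omega)-p\cdot(y-z)\geq w(y)-w(z)$ on $U(\omega)$, and dividing by $|y|$ together with $w\in \cS^+$ produces the $\liminf\geq 0$ bound. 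For the boundary case $\mu=\ovl H(p)$, I would take $\mu_n\downarrow \ovl H(p)$, apply the previous step to each $\mu_n$, and pass to the limit using the local uniform continuity of $\mu\mapsto m_\mu$ from Proposition \ref{prop::monotone-mu} together with a diagonal argument along any sequence $|y_k|\to\infty$ realizing the $\liminf$.

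For the reverse implication the strategy is to build, for each pair $\eta,\delta>0$ taken sufficiently small, an admissible pair $(\bar w,\lambda)$ in \eqref{eq:effH} witnessing $\ovl H(p)\leq \mu+\delta\eta$. The natural candidate $m_\mu(\cdot,z,\omega)-p\cdot y$ on $U(\omega)$ blows up on $\partial U(\omega)$ by Proposition \ref{prop::blow-up}, so I would instead use a Lipschitz extension in the spirit of Lemma \ref{lemma::improve-Lip}: setting $L_\delta:=\mu/\delta+\eta$, define
\[
\hat m_\mu(y):=\sup_{x\in U^\delta(\omega)}\bigl\{m_\mu(x,z,\omega)-L_\delta|y-x|\bigr\}, \qquad \bar w(y):=\hat m_\mu(y)-p\cdot y.
\]
By Lemma \ref{lemma::improve-Lip}, $\hat m_\mu$ agrees with $m_\mu(\cdot,z,\omega)$ on $U^\delta(\omega)$ and is globally Lipschitz with constant $L_\delta$, so $\bar w+p\cdot y=\hat m_\mu\in \cL_\mu(U(\omega))$.

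The main technical obstacle is to verify $\bar w\in \cS^+$ and to control the subsolution level on all of $U(\omega)$, not merely on $U^\delta(\omega)$. The sublinearity from below of $\bar w$ on $U^\delta$ follows directly from the hypothesis since $\hat m_\mu=m_\mu$ there; for $y\notin U^\delta$ I would exploit $(S3)$ to choose, along the ray through $y$, a point $x_y\in U^\delta(\omega)$ with $|x_y|/|y|\to 1$ and $|y-x_y|/|y|\to 0$, and combine $\hat m_\mu(y)\geq m_\mu(x_y,z,\omega)-L_\delta|y-x_y|$ with the liminf hypothesis applied at $x_y$ to deduce $\liminf_{|y|\to\infty}\bar w(y)/|y|\geq 0$. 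For the subsolution inequality, on $U^\delta(\omega)$ one has $\hat m_\mu=m_\mu$ and hence $a(y,\omega)|p+D\bar w|\leq \mu$; on $U(\omega)\setminus U^\delta(\omega)$ the bounds $a(y,\omega)\leq \delta$ and $|D\hat m_\mu|\leq L_\delta$ give $a(y,\omega)|p+D\bar w|\leq \delta L_\delta=\mu+\delta\eta$. Thus $\bar w$ is admissible at level $\mu+\delta\eta$, so $\ovl H(p)\leq \mu+\delta\eta$, and letting $\delta,\eta\to 0$ delivers $\mu\geq \ovl H(p)$.
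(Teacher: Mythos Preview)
Your proposal is correct and follows essentially the same route as the paper: for the forward implication you use the definition of $\ovl H(p)$ to produce an admissible $w$, invoke the maximality of $m_\mu$, and pass to the boundary case via continuity in $\mu$; for the reverse, you take a Lipschitz extension of $m_\mu$ from $U^\delta(\omega)$, verify membership in $\cS^+$ via $(S3)$ (the paper packages this step as Lemma~\ref{lemma::up-to-bdry}), and check the subsolution level. Your bookkeeping of the level $\mu+\delta\eta$ on $U(\omega)\setminus U^\delta(\omega)$ followed by $\delta,\eta\to 0$ is in fact slightly more careful than the paper, which asserts the inequality at level $\mu$ directly.
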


\begin{proof} 
Fix $\mu >\ovl H(p)$ and recall that there exists $w_\mu(\cdot,\omega)\in \cS^+$ such that  $w_\mu(\cdot,\omega)+p\cdot y \in\cL_\mu(U(\omega))$ and
$w_\mu(y,\omega) - w_\mu(z,\omega)  + p\cdot (y-z)$ is a subsolution of the metric problem \eqref{eq:metric-pb}. The maximality of $m_\mu(\cdot,z,\omega)$ yields that, for all $y\in U(\omega)$,
	\[ m_{\mu}(y,z,\omega) \geq w_\mu(y,\omega) - w_\mu(z,\omega)  + p\cdot (y-z). \] 
From the definition of $\cS^+$ we deduce the inequality in \eqref{eq:Hbar-m_sublinear} holds for $\mu>\ovl H(p)$ and, in view of the continuity of $m_\mu$ with respect to $\mu$, it holds for any $\mu\geq \ovl H(p)$.

Conversely assume the inequality in \eqref{eq:Hbar-m_sublinear} holds for some $\mu> 0$. For $\eta>0$ and $\delta\in(0,\ovl\delta_0(\eta))$ let  $w^\delta_\mu(\cdot,\omega) \in \cS_\mu(U(\omega))$ so that $w^\delta_\mu(\cdot,\omega)=m_\mu(\cdot,z,\omega)$ in $U^\delta(\omega)$ and $\|Dw^\delta\|_\infty\leq (\mu/\delta+\eta)$.
Then $w_\mu(y,\omega): =w^\delta_\mu(y,\omega)-p\cdot (y-z)$ solves 
      \[a(y,\omega)|p+Dw_\mu| \leq \mu \;\;\;\hbox{ in } \;\;\;U(\omega).\]
In addition, for $y\in U(\omega)$, there exists $\tilde y\in U^\delta(\omega)$ such that $|y-\tilde y|\leq C\delta$ and
     \[w_\mu(y,\omega) \geq w^\delta_\mu(\tilde y,\omega)-p\cdot (\tilde y-z) - \left(\frac\mu\delta+\eta\right) |y-\tilde y|.\]
The fact that $w_\mu\in \cS^+$ follows from  Lemma \ref{lemma::up-to-bdry}. Indeed,
    \[ \liminf_{|y|\to\infty} \frac{w_\mu(y,\omega)}{|y|} \ge
    \liminf_{\substack{|y|\to\infty\\y\in U^\delta(\omega)}} 
    \frac{w^\delta_\mu(y,\omega)-p\cdot (y-z)}{|y|} =   
    \liminf_{\substack{|y|\to\infty\\y\in U(\omega)}} 
    \frac{m_\mu(y,z,\omega)-p\cdot (y-z)}{|y|} \ge 0.\]
Thus $w_\mu$  is an admissible function in the definition of $\ovl H(p)$ and, hence, $\mu\geq \ovl H(p)$.
\end{proof}

We next use this result to give a dual formulation for $\ovl H(p)$ and the averaged metric $\ovl m_\mu$ corresponding to $\cU\in\cF$.  Since the proof is similar to the one in \cite{AS:13}, we omit it.
%--------------------------------------------
\begin{corollary}[The effective Hamiltonian and the averaged metric]\label{corr::avg-metric-H}
Assume that $(A1),(A2),(A3)$, $(S1)$, $(S2)$ and $(S3)$ hold and let $\cU\in\cF$ be given by $(S1)$. For each $\mu>0$, let $\ovl m_\mu(\cdot)$ and $\ovl H$ be respectively the averaged metric and  the effective Hamiltonian corresponding to $\cU$. Then 
\begin{equation}\label{eq:Hbar-mbar}
	\ovl H(p) = \inf\{\mu>0 : \ovl m_\mu(y) \geq p\cdot y \hbox{ for all } y \in \R^n\}.
\end{equation}
Furthermore, for each $y \in \R^n$,
\begin{equation}\label{eq:mbar-Hbar} 
	\ovl{m}_\mu(y) = \sup\{ y \cdot q :  q\in\R^n \hbox{  such that } \ovl{H}(q) \le \mu\}
\end{equation}
and it is the solution of
\begin{equation}\label{eq:Hm}
	\ovl{H}(D\ovl{m}_\mu)=\mu \quad \text{in} \ \R^n \setminus \{0\}, 
	\;\;\; \hbox{ with } \;\;\; \ovl m_\mu(0)=0.
\end{equation}
\end{corollary}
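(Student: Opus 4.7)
The plan is to derive the three identities in sequence, using Proposition~\ref{prop::liminf-H}, Lemma~\ref{lemma::up-to-bdry}, and convex duality applied to the sublinear function $\ovl m_\mu$.

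First, for~\eqref{eq:Hbar-mbar}, I would combine Proposition~\ref{prop::liminf-H} with the averaging of Lemma~\ref{lemma::up-to-bdry}. Proposition~\ref{prop::liminf-H} states that $\mu\ge\ovl H(p)$ is equivalent to
\[
    \liminf_{|y|\to\infty,\;y\in U(\omega)}\frac{m_\mu(y,z,\omega)-p\cdot(y-z)}{|y|}\ge 0.
\]
Writing $y=tx$ with $|x|=1$, Lemma~\ref{lemma::up-to-bdry} identifies the liminf along each direction with $\ovl m_\mu(x)-p\cdot x$, and $1$-positive homogeneity extends the inequality to all $y\in\R^n$. Thus $\mu\ge\ovl H(p)$ if and only if $\ovl m_\mu(y)\ge p\cdot y$ for every $y\in\R^n$, and taking the infimum over $\mu$ yields~\eqref{eq:Hbar-mbar}.

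For~\eqref{eq:mbar-Hbar}, the properties of $\ovl m_\mu$ established in Section~\ref{sec::homog-metric-pb} show that it is convex, $1$-positively homogeneous, and Lipschitz, hence a finite-valued sublinear function. Consequently $\ovl m_\mu$ coincides with the support function of the compact convex set
\[
    K_\mu:=\bigl\{\,q\in\R^n:q\cdot y\le\ovl m_\mu(y)\text{ for all }y\in\R^n\,\bigr\},
\]
and by~\eqref{eq:Hbar-mbar} we identify $K_\mu=\{q\in\R^n:\ovl H(q)\le\mu\}$. The bipolar identity $\ovl m_\mu(y)=\sup_{q\in K_\mu}q\cdot y$ then gives~\eqref{eq:mbar-Hbar}.

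For~\eqref{eq:Hm}, the value at the origin is immediate from positive homogeneity. To verify the viscosity solution property on $\R^n\setminus\{0\}$, let $\phi\in\mathcal{C}^1$ be a test function with $\ovl m_\mu-\phi$ attaining a local extremum at $y_0\ne 0$. Evaluating along the ray $t\mapsto ty_0$ and using $\ovl m_\mu(ty_0)=t\,\ovl m_\mu(y_0)$, the map $t\mapsto \phi(ty_0)-t\,\ovl m_\mu(y_0)$ has the corresponding local extremum at $t=1$, which gives the Euler identity $D\phi(y_0)\cdot y_0=\ovl m_\mu(y_0)$. In the subsolution case (touching from above), the convexity of $\ovl m_\mu$ combined with the subgradient inequality at $y_0$ and Euler yields $D\phi(y_0)\cdot y\le\ovl m_\mu(y)$ for every $y$, so $D\phi(y_0)\in K_\mu$ and hence $\ovl H(D\phi(y_0))\le\mu$. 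The main obstacle is the supersolution case (touching from below): Euler gives $D\phi(y_0)\cdot y_0=\sup_{q\in K_\mu}q\cdot y_0$, and if one had $\ovl H(D\phi(y_0))<\mu$, the continuity of $\ovl H$ (which follows from its convexity and finiteness on $\R^n$, the latter a consequence of $\ovl m_\mu$ being Lipschitz for every $\mu>0$) would place $D\phi(y_0)$ in the interior of $K_\mu$; but then $q^\ast:=D\phi(y_0)+\eps\,y_0/|y_0|\in K_\mu$ would satisfy $q^\ast\cdot y_0>\sup_{K_\mu}q\cdot y_0$, contradicting the identity above.
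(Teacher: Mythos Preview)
Your argument is correct and follows essentially the approach of \cite{AS:13}, to which the paper defers for this proof. The derivation of~\eqref{eq:Hbar-mbar} from Proposition~\ref{prop::liminf-H} and Lemma~\ref{lemma::up-to-bdry}, the identification of $K_\mu$ with the sublevel set of $\ovl H$ via support-function duality, and the direct viscosity verification of~\eqref{eq:Hm} are all sound. One small point worth making explicit in the first step: the implication from $\ovl m_\mu(y)\ge p\cdot y$ for all $y$ back to the \emph{full} liminf condition of Proposition~\ref{prop::liminf-H} (over all $y\in U(\omega)$, not just along rays) requires the locally uniform convergence of Theorem~\ref{thm::avg-metric} on $\ovl U^\delta(\omega)$ together with the one-sided Lipschitz bound of Lemma~\ref{lemma::improve-Lip} to pass from $U^\delta(\omega)$ to $U(\omega)$; your sentence invoking Lemma~\ref{lemma::up-to-bdry} direction-by-direction does not by itself control sequences whose direction varies.
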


Note that Corollary \ref{corr::avg-metric-H} can be seen as a homogenization result for the metric problem. Indeed, by rescaling 
$m_\mu^\eps(x,z,\omega) = \eps m_\mu \left( x/\eps,  z/\eps, \omega\right)$
we observe that $m_\mu^\eps$ satisfies, for $z_\eps=z/\eps\in U_\eps(\omega)$,
$$  \begin{cases}
       a\left(\frac x\eps,\omega\right) |Dm^\eps_\mu| = \mu & \hbox{ in }\   U_\eps(\omega) \setminus \{z_\eps\},\\
       m^\eps_\mu(\cdot,z_\eps,\omega)= 0& \hbox{ at } \{z_\eps\}
  \end{cases}
$$
and, as $\eps\to 0$, $m^\eps_\mu$ converges to $ \ovl m_\mu$, with $\ovl m_\mu$ a solution of problem \eqref{eq:Hm}. We prove in the next section, how this implies the homogenization of the original problem \eqref{eq:HJ_eps}. 

Finally, we give a characterization of the subgradient of the averaged metric $m_\mu$ corresponding to each ergodic constant $\mu = \ovl H(p)$. This result is used in the main proof of homogenization. 
%--------------------------------------------
\begin{corollary}\label{corr::p-unitvector}
Assume that $(A1),(A2),(A3)$, $(S1),(S2)$ and $(S3)$ hold. Let $\cU$ be given by $(S1)$, $\ovl H$ the corresponding effective Hamiltonian and $\ovl m_{\mu}$ be the averaged metric. Then, for each $p\in\R^n$ with $\ovl{H}(p)>0$, there exists $y\in\R^n$ with $|y|=1$ such that $p \in \partial \ovl m_{\ovl H(p)}(y)$.
\end{corollary}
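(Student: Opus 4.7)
The plan is to combine the two dual formulas from Corollary \ref{corr::avg-metric-H} with the convexity and $1$-positive homogeneity of $\ovl m_\mu$ to locate a unit vector where the affine function $y\mapsto p\cdot y$ touches $\ovl m_{\ovl H(p)}$ from below. Throughout, set $\mu:=\ovl H(p)>0$.

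\textbf{Step 1: the global inequality.} From \eqref{eq:Hbar-mbar} and the fact that $\mu\mapsto \ovl m_\mu(y)$ is increasing, for every $\mu'>\mu$ one has $\ovl m_{\mu'}(y)\geq p\cdot y$ for all $y\in\R^n$. By the local uniform continuity of $\mu\mapsto \ovl m_\mu(y)$ (Proposition \ref{prop::monotone-mu}), passing to the limit $\mu'\downarrow \mu$ yields
\[
\ovl m_\mu(y)\geq p\cdot y \quad\text{for every } y\in\R^n.
\]

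\textbf{Step 2: a contact point on the unit sphere.} Since $\mu=\ovl H(p)>0$ is the \emph{infimum} in \eqref{eq:Hbar-mbar}, for every $\mu_n\uparrow \mu$ there exists $y_n\in\R^n$ with $\ovl m_{\mu_n}(y_n)<p\cdot y_n$. Necessarily $y_n\neq 0$, and by the $1$-positive homogeneity of $\ovl m_{\mu_n}$ the unit vector $\hat y_n:=y_n/|y_n|$ still satisfies $\ovl m_{\mu_n}(\hat y_n)<p\cdot \hat y_n$. Extract a subsequence $\hat y_n\to y^*$ with $|y^*|=1$. Using the Lipschitz continuity of $\ovl m_\mu$ in its spatial argument together with the continuity in $\mu$, we pass to the limit and obtain $\ovl m_\mu(y^*)\leq p\cdot y^*$. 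Combined with Step 1,
\[
\ovl m_\mu(y^*)=p\cdot y^*.
\]

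\textbf{Step 3: conclude the subgradient inclusion.} Since $\ovl m_\mu$ is convex, $p\in\partial \ovl m_\mu(y^*)$ is equivalent to the global affine bound
\[
\ovl m_\mu(z)\geq \ovl m_\mu(y^*)+p\cdot(z-y^*)\quad\text{for every } z\in\R^n.
\]
Using Step 2 on the left at $y^*$ and Step 1 at $z$,
\[
\ovl m_\mu(z)\geq p\cdot z = p\cdot y^* + p\cdot(z-y^*)=\ovl m_\mu(y^*)+p\cdot(z-y^*),
\]
which is exactly what is required.

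The only delicate point is Step 2: one must make sure that the strict inequality $\ovl m_{\mu_n}(y_n)<p\cdot y_n$ survives after normalization and then after taking limits. Homogeneity handles the normalization (both sides are $1$-homogeneous in $y$, and $y_n\neq 0$), while the joint continuity of $(\mu,y)\mapsto \ovl m_\mu(y)$ provided by Proposition \ref{prop::monotone-mu} together with the Lipschitz bound on $\ovl m_\mu$ ensures that the limit inequality $\ovl m_\mu(y^*)\leq p\cdot y^*$ is attained. Everything else is bookkeeping with the two dual representations in Corollary \ref{corr::avg-metric-H}.
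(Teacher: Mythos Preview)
Your proof is correct. The paper states this corollary without proof, treating it as a direct consequence of the dual representations in Corollary~\ref{corr::avg-metric-H}, so your argument supplies exactly the details the authors omit.

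Two small remarks. First, your citation for the continuity of $\mu\mapsto \ovl m_\mu(y)$ should point to the ``Properties of the averaging metric'' proposition at the end of Section~\ref{sec::homog-metric-pb} rather than to Proposition~\ref{prop::monotone-mu}, which concerns $m_\mu$, not $\ovl m_\mu$. This does not affect the argument. Second, there is a slightly more direct route via \eqref{eq:mbar-Hbar} alone: since $\ovl H(p)=\mu$ puts $p$ in the closed convex sublevel set $K_\mu=\{q:\ovl H(q)\le\mu\}$, Step~1 is immediate from $\ovl m_\mu(y)=\sup_{q\in K_\mu} q\cdot y\ge p\cdot y$; and since $\ovl H(0)=0<\mu=\ovl H(p)$ forces $p\in\partial K_\mu$, a supporting hyperplane to $K_\mu$ at $p$ yields a unit vector $y^*$ with $p\cdot y^*=\sup_{q\in K_\mu} q\cdot y^*=\ovl m_\mu(y^*)$, which is your Step~2. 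Your sequential argument via $\mu_n\uparrow\mu$ and \eqref{eq:Hbar-mbar} is an equally valid way to reach the same contact point, and Step~3 is identical either way.
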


%=================================================================================
\medskip\section{The Proof of Homogenization}\label{sec::ProofHomogenization}
%=================================================================================

It is well known (see for example \cite{AS:12}) that an intermediate step in the proof of homogenization of \eqref{eq:HJ_eps} is  the homogenization of the time-independent problem
   \begin{equation}\label{eq:HJ_stat}
    w^\eps + a\left (\frac{x}{\eps},\omega\right) |p+Dw^\eps|=0
    \; \hbox{ in } \;\;\;\; \R^n \times \Omega.
  \end{equation}

The next proposition summarizes the properties of the solution to \eqref{eq:HJ_stat}.

%-----------------------------------------------------------------------
\begin{proposition}
Assume $(A2)$ and $(A3)$. For each $\eps>0$ and $p\in\R^n$, \eqref{eq:HJ_stat} has a unique solution $w^\eps = w^\eps(\cdot,\omega;p)$ such that, for all $\omega\in\Omega$, 
  \[||w^\eps (\cdot,\omega)||_\infty \leq \|a\|_{\infty} |p|.\]
Moreover, for  each compact set $K\subset  U_{i,\eps}(\omega)$, 
	\[\ \sup_{K} |Dw^\eps(\cdot,\omega;p)|\leq \left(\frac{\|a\|_{\infty}}{\inf_K |a(\cdot,\omega)|}+1\right)|p| \]
and, for all $p,q\in\R^n$
	\[\| w^\eps(\cdot,\omega;p) - w^\eps (\cdot,\omega;q) \|_{L^\infty}\leq \|a\|_{\infty} |p-q|. \]
\end{proposition}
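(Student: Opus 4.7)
I will handle the three assertions—existence and uniqueness together with the $L^\infty$ bound, the interior Lipschitz estimate on compacts $K\subset U_{i,\eps}(\omega)$, and the $p$-continuity—in turn, combining Perron's method with comparison, a doubling-of-variables argument, and a comparison-based perturbation.

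For the first, observe that the constants $w^{\pm}\equiv\pm\|a\|_\infty|p|$ are respectively a super- and a subsolution of \eqref{eq:HJ_stat}: since $a\ge-\|a\|_\infty$,
\[
w^{+}+a(x/\eps,\omega)|p+Dw^{+}|=\|a\|_\infty|p|+a(x/\eps,\omega)|p|\ge 0,
\]
and analogously $w^{-}+a(x/\eps,\omega)|p+Dw^{-}|\le 0$. The Hamiltonian $H(x,r,\xi):=r+a(x/\eps,\omega)|p+\xi|$ is proper and continuous and, by $(A2)$, satisfies the structural condition $|H(x,r,\xi)-H(y,r,\xi)|\le (L/\eps)|x-y|(|p|+|\xi|)$, so the comparison principle of \cite{CIL:92} applies to bounded viscosity sub/supersolutions. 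Perron's method then produces a unique bounded viscosity solution $w^\eps\in[w^{-},w^{+}]$, which yields the asserted $L^\infty$ bound.

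For the interior Lipschitz estimate, fix a compact $K\subset U_{i,\eps}(\omega)$, set $\alpha:=\inf_K|a(\cdot/\eps,\omega)|>0$ and $\Lambda:=(\|a\|_\infty/\alpha+1)|p|$. On $K$ the coefficient $a(\cdot/\eps,\omega)$ has a fixed sign, so the equation is locally coercive and heuristically rewrites as $|p+Dw^\eps|=|w^\eps|/|a|\le\|a\|_\infty|p|/\alpha$, giving $|Dw^\eps|\le\Lambda$. To rigorize this in the viscosity sense, I will run the standard doubling-of-variables argument on a slightly enlarged compact $K\subset K'\Subset U_{i,\eps}(\omega)$ with auxiliary function
\[
\Phi(x,y):=w^\eps(x)-w^\eps(y)-\Lambda'\phi(x-y)-\beta(|x-x_0|^2+|y-x_0|^2),
\]
where $\phi$ is a smooth strictly convex approximation of $|\cdot|$, $x_0\in K$, and $\Lambda'$ is the corresponding constant on $K'$; a putative interior positive maximum at $(\hat x,\hat y)$ with $\hat x\ne\hat y$, combined with the sub/supersolution inequalities for $w^\eps$ and the lower bound $|a|\ge\alpha'$ on $K'$, would contradict $\Lambda'$, and letting $\beta\to 0$ followed by $K'\searrow K$ delivers the claim.

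For the $p$-continuity, set $v:=w^\eps(\cdot,\omega;p)+\|a\|_\infty|p-q|$ and use the equation $w^\eps(\cdot,\omega;p)=-a(\cdot/\eps,\omega)|p+Dw^\eps(\cdot,\omega;p)|$ together with the reverse triangle inequality to check
\[
v+a(\cdot/\eps,\omega)|q+Dv|=\|a\|_\infty|p-q|+a(\cdot/\eps,\omega)\bigl(|q+Dw^\eps(\cdot,\omega;p)|-|p+Dw^\eps(\cdot,\omega;p)|\bigr)\ge 0
\]
in the viscosity sense, so $v$ is a supersolution of the $q$-equation; comparison with $w^\eps(\cdot,\omega;q)$ and symmetry yield the bound. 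The main obstacle is the Lipschitz estimate: since $a$ vanishes on $\partial U_{i,\eps}(\omega)$ the Hamiltonian is not globally coercive (and indeed the $m_\mu$'s of Section~\ref{sec::metric-pb} blow up on that boundary by Proposition~\ref{prop::blow-up}), so no uniform Lipschitz estimate can hold and the doubling argument must be strictly localized to a subdomain on which $|a|$ stays bounded away from zero, the constant inevitably deteriorating as $\alpha\downarrow 0$.
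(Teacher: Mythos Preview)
The paper states this proposition without proof, treating the three assertions as standard facts about \eqref{eq:HJ_stat} (existence and comparison via \cite{CIL:92, I:87}, the $L^\infty$ bound from the constant barriers, and the interior gradient estimate from the structure of the equation). Your proposal is correct and supplies precisely the details one would fill in.

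Your arguments for existence/uniqueness with the $L^\infty$ bound and for the Lipschitz dependence on $p$ are clean and complete. For the interior gradient estimate your doubling-of-variables sketch works, but it is heavier than needed: once $\|w^\eps\|_\infty\le\|a\|_\infty|p|$ is known, the equation itself gives, at any $x\in K$ with (say) $a(x/\eps,\omega)>0$, that every sub- or superdifferential $\xi$ of $w^\eps$ at $x$ satisfies
\[
|p+\xi|\le \frac{-w^\eps(x)}{a(x/\eps,\omega)}\le \frac{\|a\|_\infty|p|}{\inf_K a(\cdot/\eps,\omega)},
\]
and hence $|\xi|\le\Lambda$, which is the claimed pointwise bound; equivalently one compares $w^\eps$ with the cones $w^\eps(x_0)\pm\Lambda|x-x_0|$ on small balls inside $K$. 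If you do keep the doubling argument, note that subtracting the sub- and supersolution inequalities at $(\hat x,\hat y)$ brings in the discrepancy $a(\hat x/\eps,\omega)-a(\hat y/\eps,\omega)$; it is cleaner to use only the subsolution inequality at $\hat x$ (in the $a>0$ case), which already forces $\Lambda'\le(\|a\|_\infty/\alpha'+1)|p|+O(\beta)$ and yields the contradiction directly.
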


We next establish a homogenization result for \eqref{eq:HJ_stat}, from where Theorem \ref{thm::homog-time} follows. For the proof of convergence we deal with the entire partition $\big(U_i(\omega)\big)_{i\in I_0}$ of $\R^n$. We recall that,  for each bounded connected component, the effective Hamiltonian $\ovl H_i \equiv 0$, while for unbounded components  $\ovl H_i>0$ if $i\in I^+$ and $\ovl H_i <0$ if $i\in I^-$.

%------------------------------------------------------------------------------------------
\begin{proposition}\label{prop::homog-stat}
Assume $(A1),(A2),(A3)$, $(S1),(S2)$ and $(S3)$.   For each $i\in I$, let $\cU_i$ be given by $(S1)$, $\theta_i=\bP(\mathcal{U}_i)$ and $\ovl{H_i}$ the effective Hamiltonian corresponding to $\cU_i$. There exists $\ovl \delta_0>0$ and an event of full probability  $\widetilde \Omega\subseteq\Omega$ such that, for each $\omega\in\widetilde\Omega$ and $p\in\R^n$, the unique solution $w^\eps = w^\eps(\cdot,\omega;p)$ of \eqref{eq:HJ_stat} satisfies, for all $\delta\in(0,\ovl \delta_0)$, $R>0$ and $i\in I$,
  \begin{equation}\label{eq:w-unif-conv}
  \lim_{\eps \to 0} \sup_{x\in U_{i,\eps}^\delta(\omega)  \cap B_R}
  |w^\eps(x,\omega;p)+\ovl H_i(p)|=0
  \end{equation}
and, as $\eps\to0$,
  \begin{equation}\label{eq:w-weak-conv}
  w^\eps(\cdot,\omega;p) \stackrel{*}\rightharpoonup   \ovl w 
	: =\sum_{i\in I} \theta_i\; \ovl{H}_i(p) \hbox{ in }\ L^\infty(B_R).
  \end{equation}
\end{proposition}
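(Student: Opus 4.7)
The plan is to use the perturbed test function method on each connected component, combined with a Birkhoff-type ergodic averaging for the weak-$*$ statement. I would establish the pointwise convergence \eqref{eq:w-unif-conv} by proving the two one-sided bounds separately, using the sub-correctors from Proposition~\ref{prop::min-max-H} on one side and the rescaled metric function from Theorem~\ref{thm::avg-metric} on the other. The weak-$*$ limit \eqref{eq:w-weak-conv} then follows by combining the pointwise convergence with the ergodic theorem applied to the stationary indicators of $U_i^\delta(\omega)$.

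Fix $i\in I^+$; the case $i\in I^-$ is symmetric, and bounded components give $\ovl H_i\equiv 0$ (Proposition~\ref{prop::barH-domainU}) with $w^\eps\to 0$ uniformly on $U_{i,\eps}(\omega)\cap B_R$ thanks to the uniform bound $\|w^\eps\|_\infty\le\|a\|_\infty|p|$ and the shrinking geometry of $U_{i,\eps}$. For the \emph{lower bound} $\liminf w^\eps\ge-\ovl H_i(p)$, fix $\eta>0$, set $\mu:=\ovl H_i(p)+\eta$, and use Proposition~\ref{prop::min-max-H} to produce $w_\mu(\cdot,\omega)\in\cS^+$ with $w_\mu+p\cdot y\in\cL_\mu(U_i(\omega))$ satisfying $a(y,\omega)|p+Dw_\mu|\le\mu$ in $U_i(\omega)$. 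The function $V^\eps(x):=-\mu+\eps w_\mu(x/\eps,\omega)$ is then a viscosity subsolution of
\[
V^\eps + a(x/\eps,\omega)|p+DV^\eps|\;\le\;\eps\, w_\mu(x/\eps,\omega)\quad\text{in } U_{i,\eps}(\omega).
\]
Since $w_\mu$ is stationary and belongs to $\cS^+$, the ergodic theorem yields $\sup_{|x|\le R}\eps|w_\mu(x/\eps,\omega)|\to 0$ almost surely. Comparison with $w^\eps$ gives $w^\eps\ge V^\eps-o_\eps(1)\ge-\mu-o_\eps(1)$, and sending $\eta\downarrow 0$ yields the desired bound on $U_{i,\eps}^\delta(\omega)\cap B_R$.

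The matching \emph{upper bound} $\limsup w^\eps\le-\ovl H_i(p)$ is the main obstacle, because the convex Hamiltonian admits no canonical supersolutions and any natural candidate built from $m_\mu$ blows up at $\partial U_i$. The strategy is to exploit the dual characterization $\ovl H_i(p)=\inf\{\mu>0:\ovl m_\mu(y)\ge p\cdot y\;\forall y\}$ of Corollary~\ref{corr::avg-metric-H}. For $\mu>\ovl H_i(p)$ and a fixed $z\in U_i(\omega)$, introduce $M^\eps_\mu(x,\omega):=\eps\, m_\mu(x/\eps,z/\eps,\omega)$, a subsolution of $a(x/\eps,\omega)|DM^\eps_\mu|\le\mu$ in $U_{i,\eps}(\omega)$. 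Theorem~\ref{thm::avg-metric} together with Lemma~\ref{lemma::up-to-bdry} gives $M^\eps_\mu\to\ovl m_\mu(\cdot-z)$ locally uniformly on $U_{i,\eps}^\delta(\omega)$, and Corollary~\ref{corr::p-unitvector} furnishes a direction along which $\ovl m_\mu(\cdot-z)-p\cdot(\cdot-z)$ vanishes in the limit $\mu\downarrow\ovl H_i(p)$. Arguing by contradiction: if $w^{\eps_n}(x_n)\ge-\ovl H_i(p)+2\eta$ along a sequence $x_n\in U_{i,\eps_n}^\delta(\omega)$ with $x_n\to x_0$, then combining $w^{\eps_n}$ with the metric test function $-\mu+M^{\eps_n}_\mu-p\cdot(\cdot-z)$ and using the sharp boundary Lipschitz estimate of Lemma~\ref{lemma::improve-Lip} to transfer the comparison up to the approach to $\partial U_{i,\eps_n}(\omega)$ produces, in the limit, a subsolution of the cell inequality violating the characterization \eqref{eq:Hbar-m_sublinear} of $\ovl H_i(p)$. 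Sending $\mu\downarrow\ovl H_i(p)$ finishes the bound; Lemma~\ref{lemma::improve-Lip} is the indispensable ingredient, since without it the logarithmic blow-up of $m_\mu$ on $\partial U_i$ would prevent any comparison from closing.

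Finally, for \eqref{eq:w-weak-conv}, the Birkhoff ergodic theorem applied to the stationary indicator $\mathbf 1_{U_i^\delta(\omega)}$ yields $\mathbf 1_{U_{i,\eps}^\delta(\omega)}\rightharpoonup^*\theta_i^\delta:=\bP[\{\omega:0\in U_i^\delta(\omega)\}]$ in $L^\infty(B_R)$ as $\eps\to 0$, with $\theta_i^\delta\to\theta_i$ as $\delta\downarrow 0$. Combining this with \eqref{eq:w-unif-conv}, with the fact that $w^\eps$ vanishes on $U_{0,\eps}(\omega)$ (since the equation forces $w^\eps=0$ wherever $a=0$), with the uniform bound $\|w^\eps\|_\infty\le\|a\|_\infty|p|$, and with the vanishing of $|(U_{i,\eps}(\omega)\setminus U_{i,\eps}^\delta(\omega))\cap B_R|$ as $\delta\downarrow 0$, a diagonal argument in $\eps$ and $\delta$ identifies the weak-$*$ limit of $w^\eps$ on $B_R$ with the claimed linear combination.
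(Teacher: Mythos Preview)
Your upper-bound sketch and the weak-$*$ argument are in the spirit of the paper's Steps~1 and~3, and the role you assign to the boundary blow-up of $m_\mu$ is exactly right. The paper makes Step~1 precise by building $v^\eps:=w^\eps-w^\eps(z_\eps)-c\eta(1+|x-z_\eps|^2)^{1/2}+c\eta$, showing it is a subsolution of $a|p+Dv^\eps|\le \ovl H(p)-\eta/2$ on a bounded $D_{\eta,\eps}\subset U_\eps(\omega)$, and comparing with the rescaled $m_\mu^\eps(\cdot,\hat z,\omega)-p\cdot(\cdot-\hat z)$, which is a genuine \emph{supersolution} away from $\hat z$; the blow-up of $m_\mu$ on $\partial U(\omega)$ pushes the infimum off the lateral boundary.

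Your lower bound, however, has a real gap. The function $w_\mu$ coming from Proposition~\ref{prop::min-max-H} is \emph{not} asserted to be stationary, and membership in $\cS^+$ only gives $\liminf_{|y|\to\infty} w_\mu(y)/|y|\ge 0$---no control from above---so the claim $\sup_{|x|\le R}\eps|w_\mu(x/\eps,\omega)|\to 0$ is unjustified (indeed $w_\mu$ may grow linearly). More seriously, your candidate $V^\eps$ is a subsolution only inside $U_{i,\eps}(\omega)$ and, unlike $m_\mu$, carries no favorable behavior on $\partial U_{i,\eps}(\omega)$; nothing prevents the comparison from failing at that boundary, and a global comparison on $\R^n$ is unavailable since in other positive components the inequality with level $\mu=\ovl H_i(p)+\eta$ need not hold. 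The paper avoids sub-correctors entirely: its Step~2 repeats the Step~1 contradiction argument with the \emph{downward cone} $-m_\mu$, exploiting the symmetry $m_\mu(y,z)=m_\mu(z,y)$ of Lemma~\ref{lem::mp-sym} and the evenness of $q\mapsto a|q|$; the blow-up $-m_\mu\to-\infty$ on $\partial U(\omega)$ then supplies the boundary control from below. The functions in $\cS^+$ enter only the representation of $\ovl H$, never the convergence proof. (A minor further point: ``shrinking geometry'' plus $\|w^\eps\|_\infty\le\|a\|_\infty|p|$ does not by itself give $w^\eps\to 0$ on bounded components, since that bound is $\eps$-independent.)
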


%-----------------------------------------------------------------------

We follow the approach developed in \cite{AS:13} and present a direct argument to show that Theorem \ref{thm::avg-metric} and Corollary \ref{corr::avg-metric-H} imply homogenization of \eqref{eq:HJ_stat}. We use the perturbed test function method introduced in \cite{Ev:89}, where we face the two usual difficulties, namely the blow up of the $m^\eps_\mu$'s near the boundaries of $ U_{i,\eps}(\omega)$ and their restriction to $U^\delta_{i,\eps}(\omega)$.

\begin{proof}[\bf Proof of Proposition \ref{prop::homog-stat}]
We divide the proof into three different steps. The first is about the sub-solution property, the second about the super-solution and the last about the weak convergence result. For the first we follow  the analogous proof of \cite{AS:13}. For the second step it is usually necessary to consider another class of metric problems corresponding to $H(-p,x,\omega)$. Here, however, this is not necessary, in view of the symmetry of the Hamiltonian.

To fix ideas, we consider, as usual, one connected component $U(\omega)$ of $\{x\in\R^n:a(x,\omega)>0\}$ and we drop the subscript $i$ in the following. Note that, in this case, $w^\eps\leq 0$.\smallskip

%-----------------------------------------------------------------------
{\it Step 1.} We first show  that,  for all $\omega\in\tilde\Omega$, $p \in \R^n$, $\delta>0$ sufficiently small and $R>0$,
\begin{equation}\label{eq:sta-step1}
 	\lim_{\eps \to 0} \sup_{x\in U_\eps(\omega)\cap B_R} w^\eps(x,\omega) \leq -\ovl H(p).
\end{equation}
The proof uses a comparison argument similar to \cite{AS:12,AS:13}, but special care is needed to handle the restriction of $w^\eps$ to $U_\eps(\omega)$. Proposition \ref{prop::liminf-H}  is essential in establishing \eqref{eq:sta-step1}.
%-----------------------------------------------------------------------

Since $w^\eps\leq 0$, the result is immediate if $\ovl{H}(p)=0$, hence we only  consider the case $\ovl{H}(p)>0$. We argue by contradiction. If \eqref{eq:sta-step1} is false, then there exists $\eta>0$ and $\delta>0$ such that, for every $\eps>0$ sufficiently small there exists $z_\eps\in U^\delta_\eps(\omega)\cap B_R$ so that 
$ w^\eps(z_\eps,\omega;p) > - \ovl H(p) + \eta. $
We may assume further, that as $\eps\to 0$, $z_\eps\to z_0$.   For $c>0$ a positive constant, to be conveniently chosen, define
	\[ v^\eps(x,\omega;p) = w^\eps(x,\omega; p) - w^\eps(z_\eps,\omega;p) 
 	 - c\eta \left(1+|x-z_\eps|^2\right)^{1/2}+c\eta.\]
It follows that, for $c=1/(4\|a\|_{\infty})$, $v^\eps(\cdot,\omega;p)$ satisfies, in the viscosity sense,
\begin{equation}\label{eq:supersol-metric}
  	a\left(\frac{x}{\eps},\omega\right)|p+Dv^\eps| \leq \ovl H(p) - \frac\eta2  
 	 \;\;\; \hbox{ in } \;\;\;\;\;\; D_{\eta,\eps}(\omega),
\end{equation}
where $ D_{\eta,\eps}(\omega) : = \left\{ x\in  U_\eps(\omega): v^\eps(x,\omega;p)\geq -\eta/4\right\}.$
Furthermore, there exists $r>0$ depending on $\eta, \|a\|_{\infty}$ and independent of $\eps$ such that 
$$ D_{\eta,\eps}(\omega)\subseteq B_r(z_\eps).$$
Let $\mu = \ovl H(p)>0$. Corollary \ref{corr::p-unitvector} implies that there exists a vector $|e|=1$ so that $p\in\partial \ovl m_\mu(e)$ and  
\begin{equation}\label{eq:ineq-p-unitvector}
 	0 = \ovl m_\mu (e) - p\cdot e \leq \ovl m_\mu(y)-p\cdot y\ \hbox{ for all } y\in\R^n. 
\end{equation}
In view of $(S3)$, there exists a sequence ${\eps_{j}}\to 0$, as $j\to \infty$, so that $\hat z:= z_0 - re\in  U_{\eps_{j}}^\delta(\omega)$ for all ${\eps_{j}}>0$.
Since $m_\mu(\cdot,\cdot,\omega)$ is the maximal solution of the metric problem \eqref{eq:metricpb-wp} for $p=0$, 
	\[m^{{\eps_{j}}}_\mu(x, \hat z,\omega;p) = 
	{\eps_{j}} m_\mu\left (\frac{x}{{\eps_{j}}},\frac{\hat z}{{\eps_{j}}},\omega\right) - p \cdot (x-\hat z)\]
satisfies
\begin{equation}\label{eq:subsol-metric}
  	a\left(\frac{x}{{\eps_{j}}},\omega\right)|p+Dm_\mu^{{\eps_{j}}}(\cdot,\hat z,\omega)|= \ovl H(p)  
 	 \hbox{ in } U_{\eps_j}(\omega)\setminus{ \{\hat z\}}.
\end{equation}
Recalling that $D_{\eta,{\eps_{j}}}(\omega)\subset {\eps_{j}} U(\omega)\setminus{ \{ \hat z\}}$,  by classical comparison arguments between viscosity sub- and super-solution, we have 
	\[\inf_{D_{\eta,{\eps_{j}}}(\omega)} 
  	\left( m^{{\eps_{j}}}_\mu(x,\hat z,\omega;p) - v^{{\eps_{j}}}(x,\omega;p)\right) =
  	\inf_{\partial D_{\eta,{\eps_{j}}}(\omega)} 
  	\left( m^{{\eps_{j}}}_\mu(x,\hat z,\omega;p) - v^{{\eps_{j}}}(x,\omega;p)\right). \]
Note that $z_{\eps_j}\in D_{\eta,{\eps_{j}}}(\omega)$ and thus
\begin{eqnarray*}
 	\inf_{D_{\eta,{\eps_{j}}}(\omega)} 
  	\left( m^{{\eps_{j}}}_\mu(x,\hat z,\omega;p) - v^{{\eps_{j}}}(x,\omega;p)\right) 
  	& \leq & m^{{\eps_{j}}}_\mu(z_{{\eps_{j}}},\hat z,\omega;p) - v^{{\eps_{j}}}(z_{{\eps_{j}}},\omega;p) \\
  	& \leq &  {\eps_{j}} m_\mu\left(\frac{z_{{\eps_{j}}}}{{\eps_{j}}},\frac{\hat z}{{\eps_{j}}},\omega\right)- 
	p\cdot(z_{{\eps_{j}}}-\hat z).
\end{eqnarray*}  
On the other hand $m_\mu (\cdot,\hat z,\omega) \equiv \infty $ on $\partial (U_{\eps_{j}}(\omega))$ and therefore the infimum cannot be achieved on $\partial (U_{\eps_{j}}(\omega))$. Thus, since $v^{{\eps_{j}}}(\cdot,\omega;p) = - \eta/4$ on $\partial D_{\eta,{{\eps_{j}}}}(\omega)\setminus \partial U_{\eps_{j}}(\omega)$, 
\[ 	\inf_{\partial D_{\eta,{\eps_{j}}}(\omega)}  
	\left(m^{{\eps_{j}}}_\mu(x,\hat z,\omega;p) - v^{{\eps_{j}}}(x,\omega;p)\right) 
 	=  \inf_{\partial D_{\eta,{\eps_{j}}}(\omega)} 
  	\left( {\eps_{j}} m_\mu\left(\frac{x}{{\eps_{j}}},\frac{\hat z}{{\eps_{j}}},\omega\right)
  	- p\cdot(x-\hat z) \right)+ \frac\eta4.
\]
The above and the fact that  $ D_{\eta,\eps}(\omega)\subseteq B_r(z_\eps)$ imply
\begin{eqnarray*}
 	\inf_{B_r(z_{{\eps_{j}}}) \cap  U_{\eps_j}(\omega)} \left(
     	{\eps_{j}} m_\mu\left(\frac{x}{{\eps_{j}}},\frac{\hat z}{{\eps_{j}}},\omega\right) 
     	- {\eps_{j}} m_\mu\left(\frac{z_{{\eps_{j}}}}{{\eps_{j}}},\frac{\hat z}{{\eps_{j}}},\omega\right)
     	- p\cdot(x-z_{{\eps_{j}}}) \right)  \leq - \frac\eta 4.
\end{eqnarray*}
Taking inferior limit as $j\to\infty$ and recalling that $z_{{\eps_{j}}}\to z_0$, we obtain, in light of Theorem \ref{thm::avg-metric} and Lemma \ref{lemma::up-to-bdry}, that
	\[\inf_{x\in B_r(z_0)}\big(\ovl m_\mu(x-\hat z) - \ovl m_\mu(z_0-\hat z) - p\cdot(x-z_0)\big) \leq - \frac\eta 4,\]
and thus
	\[\inf_{x\in B_r(z_0)} \big(\ovl m_\mu(x-z_0+re) -  \ovl m_\mu(re) - p\cdot(x-z_0)\big) =   r 
  	\inf_{y\in B_1(0)}    \big(\ovl m_\mu(e)- \ovl m_\mu(y)  - p\cdot y\big) \leq - \frac{\eta}{4}.\]
We arrived to a contradiction with \eqref{eq:ineq-p-unitvector}. \smallskip

%---------------------------------------------------------------------------------
{\it Step 2.} Arguing similarly, we establish the converse inequality 
\begin{equation*}
    	\lim_{\eps \to 0} \inf_{x\in U^\delta_\eps(\omega)\cap B_R} w^\eps(x,\omega;p)
    	\geq -\ovl H(p).
\end{equation*}

In view of the symmetry of $m_\mu(\cdot,\cdot,\omega)$, we now compare $w^\eps(\cdot/\eps,\cdot/\eps,\omega)$ with the downward cone $-m_\mu(\cdot/\eps,\cdot/\eps,\omega)$. Note, however, that symmetry plays a crucial role in this case and that, in general, one could not use $-m_\mu(\cdot/\eps,\cdot/\eps,\omega)$, but rather consider a different set of metric problems (see for example the case of level set convex Hamilton-Jacobi equations \cite{AS:13}).\smallskip
%-----------------------------------------------------------------------

{\it Step 3.}
Finally, the weak convergence result \eqref{eq:w-weak-conv} is an immediate consequence of the two previous steps.  We present a sketch of the proof here for the sake of completeness. For details, we send the reader to \cite{CLS:09}. Recall that  $I=I^+\cup I^-\cup\{0\}$ with $\ovl H_0\equiv0$. Fix $\eta>0$, let $J$ be a finite subset of $I$ so that
	\[ \Big | B_R \setminus \bigcup_{j \in J}U_{j,\eps}(\omega) \Big | \leq \frac\eta2. \]
Let $\delta>0$ sufficiently small so that
	\[ \Big | B_R \setminus \bigcup_{j \in J}U_{j,\eps}^\delta(\omega) \Big| \leq \eta. \]
In light of \eqref{eq:w-unif-conv} and the ergodic theorem, we have that
\begin{equation}\label{eq:weakconvJ}
	\mathbf{1}_{\bigcup_{j\in J} (U_{j,\eps}^\delta(\omega))} w^\eps  =
	\sum_{j\in J} \mathbf{1}_{U_{j,\eps}^\delta(\omega)}w^\eps  \; \stackrel{*}{\rightharpoonup}\; 
	\sum_{j\in J} \theta_j \ovl{H}_j(p) \;\;\;\; \hbox{ in } \;\;\;\; L^\infty(\ B_R).
\end{equation}
Thus, for any test function $\phi \in L^\infty(B_R)$, it is easy to check that
	\[ \left| \int_{B_R} \phi \ovl w\,dx - \int_{B_R} \phi w^\eps \mathbf{1}_{\bigcup_{j\in J} (U_{j,\eps}^\delta(\omega))}\,dx \right| \leq \eta \|\phi\|_{\infty}. \]
Letting $\eta \to 0$, we deduce the desired result.
This concludes the proof of homogenization for the time-independent problem \eqref{eq:HJ_stat}.
\end{proof}

It is immediate to show that Theorem \ref{prop::homog-stat} implies Theorem \ref{thm::homog-time}. We only remind the ideas  below.
%========================================================
\begin{proof}[\bf Proof of Theorem \ref{thm::homog-time}]
It is enough to establish the uniform convergence of time-dependent solutions in each connected component $U_i(\omega)$. We use the classical perturbed test function method to show that, when $i\in I^+$
      \[\phi(x,t,\omega):= \limsup_{\substack{\eps\to 0\\ y\to x, s\to t \\ y\in U^\delta_{i,\eps}(\omega)} } u^\eps (y,t,\omega)\]
is a subsolution of the initial value problem \eqref{eq:HJs_avg} and, hence,  by the comparison principle deduce that $ \phi(x,t)\leq \bar u_i(x,t)$.
Similarly, one can prove the reverse inequality, where, in view of Lemma \ref{lemma::up-to-bdry}, the above liminf is taken on $U_i(\omega)$, unlike limsup above which is restricted to $U^\delta_i(\omega)$. The weak convergence for time-dependent solutions follows from \eqref{eq:w-weak-conv}. This completes the proof of homogenization for \eqref{eq:HJ_eps}.
\end{proof}

\bibliographystyle{acm} %abbrv, acm, alpha, apalike, ieeetr, plain, siam and unsrt. 
%\bibliography{biblioHJ}

\begin{thebibliography}{10}

\bibitem{AK:81}
{\sc Akcoglu, M.~A., and Krengel, U.}
\newblock Ergodic theorems for superadditive processes.
\newblock {\em J. Reine Angew. Math. 323\/} (1981), 53--67.

\bibitem{AB:10}
{\sc Alvarez, O., and Bardi, M.}
\newblock Ergodicity, stabilization, and singular perturbations for
  {B}ellman-{I}saacs equations.
\newblock {\em Mem. Amer. Math. Soc. 204}, 960 (2010), vi+77.

\bibitem{AP:96}
{\sc Antal, P., and Pisztora, A.}
\newblock On the chemical distance for supercritical {B}ernoulli percolation.
\newblock {\em Ann. Probab. 24}, 2 (1996), 1036--1048.

\bibitem{AL:98}
{\sc Arisawa, M., and Lions, P.-L.}
\newblock On ergodic stochastic control.
\newblock {\em Comm. Partial Differential Equations 23}, 11-12 (1998),
  2187--2217.

\bibitem{AS:12}
{\sc Armstrong, S.~N., and Souganidis, P.~E.}
\newblock Stochastic homogenization of {H}amilton-{J}acobi and degenerate
  {B}ellman equations in unbounded environments.
\newblock {\em J. Math. Pures Appl. (9) 97}, 5 (2012), 460--504.

\bibitem{AS:13}
{\sc Armstrong, S.~N., and Souganidis, P.~E.}
\newblock Stochastic homogenization of level-set convex {H}amilton-{J}acobi
  equations.
\newblock {\em Int. Math. Res. Not. 15}, 2 (2013), 3420â3449.

\bibitem{AT:13}
{\sc Armstrong, S.~N., and Tran., H.~V.}
\newblock Stochastic homogenization of viscous {H}amilton-{J}acobi equations
  and applications.
\newblock {\em preprint, arXiv:1310.1749 [math.AP]\/} (2013).

\bibitem{ATY:13}
{\sc Armstrong, S.~N., Tran., H.~V., and Yu, Y.}
\newblock Stochastic homogenization of a nonconvex {H}amilton-{J}acobi
  equation.
\newblock {\em preprint, arXiv:1311.2029 [math.AP]\/} (2013).

\bibitem{BCD:97}
{\sc Bardi, M., and Capuzzo-Dolcetta, I.}
\newblock {\em Optimal control and viscosity solutions of
  {H}amilton-{J}acobi-{B}ellman equations}.
\newblock Systems \& Control: Foundations \& Applications. Birkh\"auser Boston,
  Inc., Boston, MA, 1997.
\newblock With appendices by Maurizio Falcone and Pierpaolo Soravia.

\bibitem{B:93}
{\sc Barles, G.}
\newblock Discontinuous viscosity solutions of first-order {H}amilton-{J}acobi
  equations: a guided visit.
\newblock {\em Nonlinear Anal. 20}, 9 (1993), 1123--1134.

\bibitem{B:94}
{\sc Barles, G.}
\newblock {\em Solutions de viscosit\'e des \'equations de
  {H}amilton-{J}acobi}, vol.~17 of {\em Math\'ematiques \& Applications
  (Berlin)}.
\newblock Springer-Verlag, Paris, 1994.

\bibitem{B:07}
{\sc Barles, G.}
\newblock Some homogenization results for non-coercive {H}amilton-{J}acobi
  equations.
\newblock {\em Calc. Var. Partial Differential Equations 30}, 4 (2007),
  449--466.

\bibitem{BS:98}
{\sc Barles, G., and Souganidis, P.~E.}
\newblock A new approach to front propagation problems: theory and
  applications.
\newblock {\em Arch. Rational Mech. Anal. 141}, 3 (1998), 237--296.

\bibitem{BJ:90}
{\sc Barron, E.~N., and Jensen, R.}
\newblock Semicontinuous viscosity solutions for {H}amilton-{J}acobi equations
  with convex {H}amiltonians.
\newblock {\em Comm. Partial Differential Equations 15}, 12 (1990), 1713--1742.

\bibitem{CS:13}
{\sc Cardaliaguet, and Souganidis, P.~E.}
\newblock Homogenization and enhancement of the $g$-equation in random
  environments.
\newblock {\em Comm. Pure Appl. Math.\/} (to appear).

\bibitem{C:10}
{\sc Cardaliaguet, P.}
\newblock Ergodicity of {H}amilton-{J}acobi equations with a noncoercive
  nonconvex {H}amiltonian in {$\mathbb R^2/\mathbb Z^2$}.
\newblock {\em Ann. Inst. H. Poincar\'e Anal. Non Lin\'eaire 27}, 3 (2010),
  837--856.

\bibitem{CLS:09}
{\sc Cardaliaguet, P., Lions, P.-L., and Souganidis, P.~E.}
\newblock A discussion about the homogenization of moving interfaces.
\newblock {\em J. Math. Pures Appl. (9) 91}, 4 (2009), 339--363.

\bibitem{CB:03}
{\sc Craciun, B., and Bhattacharya, K.}
\newblock Homogenization of a {H}amilton-{J}acobi equation associated with the
  geometric motion of an interface.
\newblock {\em Proc. Roy. Soc. Edinburgh Sect. A 133}, 4 (2003), 773--805.

\bibitem{CIL:92}
{\sc Crandall, M.~G., Ishii, H., and Lions, P.-L.}
\newblock User's guide to viscosity solutions of second order partial
  differential equations.
\newblock {\em Bull. Amer. Math. Soc. (N.S.) 27}, 1 (1992), 1--67.

\bibitem{Ev:89}
{\sc Evans, L.~C.}
\newblock The perturbed test function method for viscosity solutions of
  nonlinear {PDE}.
\newblock {\em Proc. Roy. Soc. Edinburgh Sect. A 111}, 3-4 (1989), 359--375.

\bibitem{Ev:92}
{\sc Evans, L.~C.}
\newblock Periodic homogenisation of certain fully nonlinear partial
  differential equations.
\newblock {\em Proc. Roy. Soc. Edinburgh Sect. A 120}, 3-4 (1992), 245--265.

\bibitem{GM:04}
{\sc Garet, O., and Marchand, R.}
\newblock Asymptotic shape for the chemical distance and first-passage
  percolation on the infinite bernoulli cluster.
\newblock {\em ESAIM: Probability and Statistics 8\/} (9 2004), 169--199.

\bibitem{Gr:99}
{\sc Grimmett, G.}
\newblock {\em Percolation}, second~ed., vol.~321 of {\em Grundlehren der
  Mathematischen Wissenschaften [Fundamental Principles of Mathematical
  Sciences]}.
\newblock Springer-Verlag, Berlin, 1999.

\bibitem{IM:08}
{\sc Imbert, C., and Monneau, R.}
\newblock Homogenization of first-order equations with
  {$(u/\epsilon)$}-periodic {H}amiltonians. {I}. {L}ocal equations.
\newblock {\em Arch. Ration. Mech. Anal. 187}, 1 (2008), 49--89.

\bibitem{I:87}
{\sc Ishii, H.}
\newblock Perron's method for {H}amilton-{J}acobi equations.
\newblock {\em Duke Math. J. 55}, 2 (1987), 369--384.

\bibitem{I:00}
{\sc Ishii, H.}
\newblock Almost periodic homogenization of {H}amilton-{J}acobi equations.
\newblock In {\em International {C}onference on {D}ifferential {E}quations,
  {V}ol. 1, 2 ({B}erlin, 1999)}. World Sci. Publ., River Edge, NJ, 2000,
  pp.~600--605.

\bibitem{KRV:06}
{\sc Kosygina, E., Rezakhanlou, F., and Varadhan, S. R.~S.}
\newblock Stochastic homogenization of {H}amilton-{J}acobi-{B}ellman equations.
\newblock {\em Comm. Pure Appl. Math. 59}, 10 (2006), 1489--1521.

\bibitem{KV:08}
{\sc Kosygina, E., and Varadhan, S. R.~S.}
\newblock Homogenization of {H}amilton-{J}acobi-{B}ellman equations with
  respect to time-space shifts in a stationary ergodic medium.
\newblock {\em Comm. Pure Appl. Math. 61}, 6 (2008), 816--847.

\bibitem{PLL:82}
{\sc Lions, P.-L.}
\newblock {\em Generalized solutions of {H}amilton-{J}acobi equations}, vol.~69
  of {\em Research Notes in Mathematics}.
\newblock Pitman (Advanced Publishing Program), Boston, Mass.-London, 1982.

\bibitem{LPV}
{\sc Lions, P.-L., Papanicolaou, G., and Varadhan, S.}
\newblock Homogenization of {H}amilton-{J}acobi equations.
\newblock Preprint, 1987.

\bibitem{LS:05}
{\sc Lions, P.-L., and Souganidis, P.~E.}
\newblock Homogenization of ``viscous'' {H}amilton-{J}acobi equations in
  stationary ergodic media.
\newblock {\em Comm. Partial Differential Equations 30}, 1-3 (2005), 335--375.

\bibitem{LS:10}
{\sc Lions, P.-L., and Souganidis, P.~E.}
\newblock Stochastic homogenization of {H}amilton-{J}acobi and
  ``viscous''-{H}amilton-{J}acobi equations with convex
  nonlinearities---revisited.
\newblock {\em Commun. Math. Sci. 8}, 2 (2010), 627--637.

\bibitem{OS:88}
{\sc Osher, S., and Sethian, J.~A.}
\newblock Fronts propagating with curvature-dependent speed: algorithms based
  on {H}amilton-{J}acobi formulations.
\newblock {\em J. Comput. Phys. 79}, 1 (1988), 12--49.

\bibitem{RT:00}
{\sc Rezakhanlou, F., and Tarver, J.~E.}
\newblock Homogenization for stochastic {H}amilton-{J}acobi equations.
\newblock {\em Arch. Ration. Mech. Anal. 151}, 4 (2000), 277--309.

\bibitem{S:09}
{\sc Schwab, R.~W.}
\newblock Stochastic homogenization of {H}amilton-{J}acobi equations in
  stationary ergodic spatio-temporal media.
\newblock {\em Indiana Univ. Math. J. 58}, 2 (2009), 537--581.

\bibitem{S:95}
{\sc Souganidis, P.~E.}
\newblock Front propagation: theory and applications.
\newblock In {\em Viscosity solutions and applications ({M}ontecatini {T}erme,
  1995)}, vol.~1660 of {\em Lecture Notes in Math.} Springer, Berlin, 1997,
  pp.~186--242.

\bibitem{S:99}
{\sc Souganidis, P.~E.}
\newblock Stochastic homogenization of {H}amilton-{J}acobi equations and some
  applications.
\newblock {\em Asymptot. Anal. 20}, 1 (1999), 1--11.

\end{thebibliography}

\end{document}